\newtheorem{thm}{Theorem}[section]
\newtheorem{lem}[thm]{Lemma}
\newtheorem{cor}[thm]{Corollary}
\newtheorem{prop}[thm]{Proposition}
\newtheorem{rem}[thm]{Remark}
\newtheorem{defn}[thm]{Definition}
\newcommand{\be}{\begin{equation}}
\newcommand{\ee}{\end{equation}}
\numberwithin{equation}{section}
\def\mR{{\mathcal{R}}}
\def\mO{{\mathcal{O}}}
\def\tmO{{\tilde{\mathcal{O}}}}
\def\dr{\frac{d}{dr}}
\def\Tr{{\mathrm{Tr}}}
\def\N{\mathcal{N}}
\def\a{{\alpha}}
\def\k{\frac{k}{2}}
\def\dvol{\mathrm{dvol}}
\def\h{{\hbar}}
\def\ve{\delta}
\newcommand{\RNum}[1]{\uppercase\expandafter{\romannumeral #1\relax}}
\def\l{\lambda}
\def\b{\beta}
\def\R{\mathbb{R}}
\def\k{\kappa}
\def\half{\frac{1}{2}}
\def\n{\frac{n}{2}}
\def\e{\mathcal{E}}
\def\ep{\epsilon}
\def\Rt{{\tilde{R}_{\h}}}
\def\t{\Theta}
\newcommand{\ba}{\begin{aligned}}
	\newcommand{\ea}{\end{aligned}}
\newcommand{\bs}{\begin{split}}
	\newcommand{\es}{\end{split}}
\begin{document}
	\def\o{\omega}
	\def\dvol{\mathrm{dvol }}
	\def\O{\Omega}
	\def\l{\lambda}
	\title{Weyl Law for Schrödinger Operators on Noncompact Manifolds, Heat Kernel, and Karamata-Hardy-Littlewood Theorem}
	\author{Xianzhe Dai\thanks{Department of Mathematics, UCSB, Santa Barbara CA 93106, dai@math.ucsb.edu. 
			}
		\and Junrong Yan\footnote{Department of Mathematics, Northeastern University, Boston, MA, 02215, j.yan@northeastern.edu}
	}
	\date{}                                           
	\maketitle
	\abstract{
Building on our earlier work on heat kernel asymptotics for Schrödinger-type operators on noncompact manifolds, we establish both the classical and semiclassical Weyl laws for Schrödinger operators of the form $\Delta + V$ and $\hbar^2 \Delta + V$ on complete noncompact manifolds. While the semiclassical law can be approached via localization, the classical Weyl law has remained widely expected but unproven in this generality. We impose a mild bounded integral oscillation condition on \( V \) in addition to the assumptions that \( V \) diverges at infinity and satisfies a doubling condition.  In this setting, our oscillation condition is sharp and strictly weaker than all previously known assumptions, even in the Euclidean case. 

A central novelty of our approach is an extended Karamata–Hardy–Littlewood Tauberian theorem, adapted to accommodate non-regularly varying spectral asymptotics in noncompact settings, together with its semiclassical analogue. These Tauberian tools allow us to derive both versions of Weyl’s law within a unified framework.
        }
      %  \tableofcontents
		\section{Introduction}
        
	In 1911, Weyl \cite{weyl1911asymptotische} established a fundamental asymptotic formula describing the distribution of large eigenvalues of the Dirichlet Laplacian on a bounded domain \(X \subset \mathbb{R}^n\):
\begin{equation} \label{classweyl}
\mathcal{N}(\lambda) \sim (2\pi)^{-n} \omega_n \lambda^{n/2} |X|\quad \text{as} \quad \lambda \to +\infty,
\end{equation}
where \(\mathcal{N}(\lambda)\) counts the eigenvalues of the (positive) Laplacian not exceeding \(\lambda\), \(\omega_n\) is the volume of the unit ball in \(\mathbb{R}^n\), and \(|X|\) denotes the volume of \(X\).

Known as Weyl's law, this formula reveals a profound link between the spectral characteristics of quantum systems and the geometry of their classical counterparts. Over the past century, it has been extended to a variety of geometric and analytic contexts via diverse methods; see, for instance, \cite{ivrii2016100, shubin2001pseudodifferential, chavel1984eigenvalues,WeylSurvey}  for a comprehensive overview.

One powerful tool for proving Weyl laws is the Karamata–Hardy–Littlewood (KHL) Tauberian theorem:
\begin{thm}[KHL Tauberian Theorem  {\cite{karamata1931neuer}}]  \label{karama}
Let \(\mu\) be an increasing function on \([0, \infty)\), and let \(\alpha > 0\). If
\be\label{classkara}
\int e^{-t\lambda} \, d\mu(\lambda) \sim t^{-\alpha} L(t), \quad \text{as } t \to 0^+,
\ee
for some slowly varying function \(L\), then
\[
\int_0^\lambda d\mu(r) \sim \frac{\lambda^\alpha L(\lambda^{-1})}{\Gamma(\alpha + 1)}, \quad \text{as } \lambda \to \infty.
\]
Here, \(L\) is said to be \textbf{slowly varying at $0$} if for all \(c > 0\),
\[
\lim_{t \to 0^+} \frac{L(ct)}{L(t)} = 1.
\]
A function of the form \(t^{-\alpha} L(t)\) is called \textbf{regularly varying} of index \(\alpha \in \mathbb{R}\).
\end{thm}

In particular, the classical Weyl law~\eqref{classweyl} on compact manifolds follows from the heat kernel expansion combined with this Tauberian theorem.

The situation for Schr\"odinger operators on noncompact manifolds, however, is much more subtle and challenging. While semiclassical Weyl's law could be obtained via localization methods \cite{braverman2025semi}, the classical one is considerably more delicate.  It is widely expected to hold under appropriate geometric and analytic conditions, but no proof is known for arbitrary noncompact manifolds, more than fifty years after the definitive work of Rosenbljum \cite{rozenbljum1974asymptotics}. Existing results focus primarily on $\mathbb{R}^n$~\cite{de1950asymptotic, rozenbljum1974asymptotics, feigin1976asymptotic, fleckinger1981estimate, levendorskiui1996spectral, tachizawa1992eigenvalue}, etc., or on manifolds with specific geometric structures at infinity, such as asymptotically Euclidean spaces, asymptotically hyperbolic spaces, or manifolds with cylindrical ends, etc.~\cite{bonthonneau2015weyl, moroianu2008weyl, coriasco2021weyl, MP-WEYL, chitour2024weyl}.

The Dirichlet–Neumann bracketing method, though powerful for proving the classical Weyl law on \( \mathbb{R}^n \), faces fundamental obstacles on general manifolds; see \cref{adv}. In this paper, we instead develop a new Tauberian theorem and its semiclassical analogue, tailored to the non-regularly varying spectral asymptotics arising in noncompact settings (see \cref{idea}). Combining this with the heat kernel expansion techniques, we obtain both classical and semiclassical Weyl laws for Schr\"odinger operators on complete noncompact manifolds with bounded geometry. In $\S$ \ref{removing}, we briefly outline how the bounded geometry assumptions can be relaxed, and how our argument can be extended to magnetic Schr\"odinger operators.
	 
\subsection{Notations and Assumptions}
		In this paper, we assume that all of our (Riemannian) manifolds have bounded geometry: 
		\begin{defn}\label{bounded-geometry}
			Let $(M,g)$ be a complete Riemannian manifold. $(M,g)$ is said to have bounded geometry, if the following
			conditions hold:
			\begin{enumerate}[(1)]
				\item The injectivity radius of $(M,g)$ is bounded below by some positive constant $\tau_0$.
				\item The norm of the curvature tensor and its first covariant derivative are uniformly bounded above by a constant $R_0 > 0$.
			\end{enumerate}
          For the Euclidean space $M = \R^n$, we set $\tau_0 = \sqrt{n}$.
		\end{defn}
		Given a complete Riemannian manifold $(M, g)$, let $\Delta$ denote the Laplace-Beltrami operator acting on $C^{\infty}(M)$. (Our sign convention for the Laplace operator is the one that makes $\Delta$ a positive operator.)
 %In local coordinates $x^1, \dots, x^n$, we have
		%\[
		%\Delta = -\frac{1}{\sqrt{\det g}} \frac{\partial}{\partial x^i} \left( \sqrt{\det g} \, g^{ij} \frac{\partial}{\partial x^j} \right),
		%\]
		%where $g = g_{ij} dx^i dx^j$, $(g^{ij}) = (g_{ij})^{-1}$, and $\det g = \det(g_{ij})$. 
        The corresponding Schrödinger operator on $(M, g)$ takes the form $\Delta + V(x)$, where $V(x) \in L_{\mathrm{loc}}^{\infty}(M)$ is the potential function.

We assume that
\be\label{cond-V-2}
\mathrm{ess\ lim}_{d(p, p_0) \to \infty} V(p) = \infty,
\ee
meaning that for every $L > 0$, there exists $R > 0$ such that
$$
V(p) \geq L \quad \text{for almost every } p \text{ with } d(p, p_0) \geq R.
$$
Here $d$ is the distance function induced by $g$ and $p_0$ is some fixed point.

It is well known that under these conditions, the operator $\Delta + V(x)$ is essentially self-adjoint (cf. \cite{rofe1970conditions,oleinik1994connection}; see also \cite{braverman2002essential} for Schr\"odinger-type operators acting on sections of vector bundles). Moreover, the spectrum of $\Delta + V(x)$ is discrete, and each eigenvalue has finite multiplicity.

		The main object of our study is the eigenvalue counting function (counted with multiplicity)
		$$\N(\l):=\#\{\tilde{\l}:\tilde{\l} \text{ is an eigenvalue of $\Delta+V$},\tilde{\l}<\l\}. $$ 
		Here for a finite set $A$, $\#A$ denotes the number of elements in $A.$
		
We introduce some assumptions on the growth and regularity of $ V $, similar to those in \cite{rozenbljum1974asymptotics}.

Let $V \in L^\infty_{\mathrm{loc}}(M)$ satisfy \eqref{cond-V-2}, and define
\begin{equation} \label{notation:sigma}
    \sigma(\lambda) = \Big| \big\{ x \in M : V(x) \leq \lambda \big\} \Big|,
\end{equation}
where $| \cdot |$ denotes the measure of a set induced by the metric $g$ on $M$.
		\begin{defn} \label{Def-doubling}
			We say $V$ satisfies the doubling condition, if there exists $C_V>0$, such that 
			\begin{equation}\label{double}
				\sigma(2\l)\leq C_V\sigma(\l)
			\end{equation}
			when $\l\geq\l_0$ for some $\l_0>0$. 
		\end{defn}

One consequence of the doubling condition is that, for any $t>0$, $$\int_Me^{-tV(x)}dx<\infty.$$ See Proposition \ref{Prop-V} for details.

\begin{defn} \label{Def-a-reg}
Let $V \in L^\infty_{\mathrm{loc}}(M)$.
 For some $\b\in[0,\half]$, we say $V$ is $\b$-regular if there exists a decreasing continuous function $v: \R %[1,\infty)
\mapsto (0,\infty)$ with $\lim_{t\to\infty}v(t)=0$, such that for any $x,y\in M$, whenever $d(x,y)<\tau_0$, we have 
		\begin{equation}\label{secondcond}
					|V(x)-V(y)|\leq d(x,y)^{2\b}\max\{|V(x)|^{1+\b},1\}v\big(V(x)\big).
				\end{equation}
This can be thought of as a quantified H\"older continuity condition for $V$.

We set \be\label{defn-Ra}\mR_\b:=\{V\in L_{\mathrm{loc}}^\infty(M): \text{ $V$  satisfies \eqref{cond-V-2}, \eqref{double} and \eqref{secondcond} }\}. \ee
 
		\end{defn}

\def\mO{{\mathcal{O}}}

The quantified H\"older regularity can in fact be significantly relaxed in an integral sense. To avoid introducing too much technicality in the introduction, we state this weaker condition in \cref{defn-beta-Oscillation}, where we also introduce a much larger class $\mathcal{O}_\beta$, for $\beta \in [0, \tfrac{1}{2}]$.

\subsection{Main Results}

We begin by extending the classical KHL Tauberian theorem (\Cref{karama}) and formulating a semiclassical version. In this extension (compare \eqref{classkara} and \eqref{newkara}),  the right-hand side involves an additional measure $d\nu$, which is adapted to capture the non-regularly varying spectral asymptotics that arise in noncompact settings (see \Cref{idea}).

		\begin{thm}\label{Kar}  
			Let $\mu$ and $\nu$ be increasing functions on $[0, \infty)$, and let $\alpha \in (0, \infty)$. Suppose that:  
			\begin{enumerate}[(1)]  
				\item For all $t > 0$, $e^{-tr} \in L^1([0, \infty), d\mu) \cap L^1([0, \infty), d\nu)$.  
				\item $\nu$ satisfies the doubling condition: there exists a constant $C_\nu > 0,s_0>0$ such that for all $s \geq s_0$,  
				$
				\nu(2s) \leq C_\nu \nu(s).
				$ 
			\end{enumerate}  
			Then 
			\begin{equation}  \label{newkara}
				\int e^{-t r} \, d\mu(r) \sim t^{-\alpha} \int e^{-tr} \, d\nu(r) \quad \text{as } t \to 0^+,  
			\end{equation}  
			implies  
			\begin{equation*}  
				\int_{0}^{\lambda} \, d\mu(r) \sim \frac{1}{\Gamma(\alpha + 1)} \int_{0}^{\lambda} (\lambda - r)^\alpha \, d\nu(r) \quad \text{as } \lambda \to \infty.  
			\end{equation*}  
		\end{thm}  
\begin{rem}
 It is easy to construct an increasing function $\nu$ satisfying the doubling condition above, but whose Laplace transform
$$
\int_0^\infty e^{-tr} \, d\nu(r)
$$
is not asymptotically regularly varying (see \Cref{non-regular}). Hence, our theorem strictly extends the classical KHL Tauberian theorem (\Cref{karama}). Moreover, the same argument implies that, under the same assumptions as in \Cref{Kar}, for a slowly varying function \( L \),
\begin{equation*}
    \int e^{-t r} \, d\mu(r) \sim t^{-\alpha} L(t) \int e^{-t r} \, d\nu(r) \quad \text{as } t \to 0^+,
\end{equation*}
implies
\begin{equation*}
    \int_0^\lambda d\mu(r) \sim \frac{L(\lambda^{-1})}{\Gamma(\alpha + 1)} \int_0^\lambda (\lambda - r)^\alpha \, d\nu(r) \quad \text{as } \lambda \to \infty.
\end{equation*} 
\end{rem}
		
		\begin{thm}\label{Kar1}  
			Let $\{\mu_\h\}_{\h \in (0,1]}$ be a family of increasing functions on $[0, \infty)$, $\nu$ an increasing function on $[0, \infty)$, and $\alpha \in [0, \infty)$. Assume that
			%\begin{enumerate}[(1)]
				 there exists $t_0 > 0$ such that for all $t \geq t_0$, $e^{-tr} \in \bigcap_{\h > 0} L^1([0, \infty), d\mu_\h) \cap L^1([0, \infty), d\nu)$.  
				%\item For any open subset $U \subset \mathbb{R}^+$, $\nu(U) > 0$. 
			%\end{enumerate}  
			
			If, for any $t \geq t_0$,  
			\begin{equation*}  
				\int e^{-tr} \, d\mu_\h(r) \sim (t\h^2)^{-\alpha} \int e^{-tr} \, d\nu(r) \quad \text{as } \h \to 0^+,  
			\end{equation*}  
			then, for any bounded open interval $I$,  
			\begin{equation*}  
				\h^{2\alpha} \int_I \, d\mu_\h(r) \sim \frac{1}{\Gamma(\alpha + 1)} \int_I r_+^{\alpha - 1} * d\nu(r) \quad \text{as } \h \to 0^+, 
			\end{equation*}  
			where $r_+^{\alpha - 1} * d\nu(r)$ is the Lebesgue–Stieltjes measure associated with the increasing functions below
		\[
		 \int_0^r\int_0^{s} (s - \tilde{s})^{\alpha - 1} \, d\nu(\tilde{s})  ds.
		\]
		\end{thm}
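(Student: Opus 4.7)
The plan is to reduce the semiclassical Laplace transform convergence to vague convergence of rescaled measures, then extract convergence on bounded intervals. Set $\tilde{\mu}_\h := \h^{2\alpha}\mu_\h$; the hypothesis becomes, for every $t\geq t_0$,
\[
\int_0^\infty e^{-tr}\,d\tilde{\mu}_\h(r)\longrightarrow t^{-\alpha}\int_0^\infty e^{-tr}\,d\nu(r)\quad\text{as }\h\to 0^+.
\]
Using $t^{-\alpha}=\frac{1}{\Gamma(\alpha)}\int_0^\infty e^{-ts}s^{\alpha-1}\,ds$ together with Fubini, the right-hand side is the Laplace transform of a Radon measure $\mu_0$ given (up to $\Gamma$-normalisation) by the fractional convolution $r_+^{\alpha-1}\ast d\nu$ that appears in the conclusion; the degenerate case $\alpha=0$ is handled separately, since then $\mu_\h\to\nu$ vaguely and the claim is immediate.

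Next I would establish uniform tightness on compact subintervals. For any fixed $R>0$ and any $\h$,
\[
\tilde{\mu}_\h([0,R])\leq e^{t_0R}\int_0^\infty e^{-t_0 r}\,d\tilde{\mu}_\h(r),
\]
and by hypothesis the right-hand integral converges to $t_0^{-\alpha}\int e^{-t_0 r}\,d\nu<\infty$, hence is uniformly bounded in $\h$. Helly's selection principle then ensures that every sequence $\h_n\to 0^+$ admits a subsequence along which $\tilde\mu_{\h_n}$ converges vaguely to some Radon measure $\mu_\ast$ on $[0,\infty)$.

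The main step is to identify every such subsequential limit as $\mu_0$. Since $e^{-tr}$ is not compactly supported, vague convergence cannot be applied directly, so for a cutoff $\chi_R\in C_c([0,\infty))$ with $\chi_R\equiv 1$ on $[0,R]$ I would split $e^{-tr}=\chi_R(r)e^{-tr}+(1-\chi_R(r))e^{-tr}$. The first piece converges by vague convergence, and for the second piece the elementary estimate
\[
\int(1-\chi_R(r))e^{-tr}\,d\tilde\mu_\h(r)\leq e^{-(t-t_0)R}\int e^{-t_0 r}\,d\tilde\mu_\h(r)
\]
holds for $t>t_0$; the right-hand side tends to $0$ as $R\to\infty$ uniformly in $\h$ by the tightness step. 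Letting $\h\to 0^+$ first and then $R\to\infty$ yields $\int e^{-tr}\,d\mu_\ast=t^{-\alpha}\int e^{-tr}\,d\nu$ for every $t>t_0$. Uniqueness of the Laplace transform on a half line forces $\mu_\ast=\mu_0$, so the entire family $\tilde\mu_\h$ converges vaguely to $\mu_0$. Finally, because $\mu_0$ has no atoms (for $\alpha>0$ its density is locally integrable), a standard sandwich/portmanteau argument with continuous approximations of $\mathbf{1}_I$ yields $\h^{2\alpha}\int_I d\mu_\h=\int_I d\tilde\mu_\h\to\int_I d\mu_0$ for every bounded open interval $I$, which is the claimed asymptotic. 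The chief obstacle is precisely this tail control step: the hypothesis only supplies convergence of the Laplace transform at $t\geq t_0$, and the exponential decay bound above is what permits the non-compactly supported test function $e^{-tr}$ to be handled via vague convergence.
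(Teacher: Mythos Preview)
Your argument is correct and takes a genuinely different route from the paper's. Both proofs rescale to $\tilde\mu_\h=\h^{2\alpha}\mu_\h$, identify the target measure $\nu^\alpha$ (your $\mu_0$) via its Laplace transform, and finish with a sandwich argument on $\chi_I$ using that $\nu^\alpha$ is atomless. The divergence is in the middle step, passing from convergence against exponentials to convergence against $C_c$ test functions. The paper observes that the family $\{e^{-r}\,d\tilde\mu_\h\}$ is uniformly bounded (this is exactly your tightness estimate with $R$ absent), invokes Stone--Weierstrass to approximate $f(r)e^{r}$ uniformly by linear combinations of exponentials $e^{-sr}$ with $s\geq t_0$, and then interchanges the two limits directly. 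You instead run a compactness argument: Helly extracts subsequential vague limits, your tail estimate $\int(1-\chi_R)e^{-tr}\,d\tilde\mu_\h\leq e^{-(t-t_0)R}\int e^{-t_0 r}\,d\tilde\mu_\h$ lets you test the limit measure against $e^{-tr}$ for $t>t_0$, and Laplace-transform uniqueness on a half-line pins down the limit. Your approach is the classical continuity-theorem strategy and is slightly more conceptual; the paper's Stone--Weierstrass route avoids subsequences and Laplace uniqueness altogether, making it marginally more self-contained. Either way the key analytic input is the same uniform bound on $\int e^{-t_0 r}\,d\tilde\mu_\h$, which you have correctly identified.
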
  
\begin{rem}
   Notably, this theorem does not require the doubling condition on $\nu.$
\end{rem}

Consider the eigenvalue counting functions (counted with multiplicity)
				\[
				\N_\h(\lambda) := \#\{\tilde{\lambda} : \tilde{\lambda} \text{ is an eigenvalue of } \h^2\Delta + V, \tilde{\lambda} < \lambda\}\]
                and $$\N(\l):=\N_{\h=1}(\l).$$     
 To study Weyl's law, we first establish the following.
\begin{thm}\label{heatexpan}
Let $V \in \mR_\b$ (or more generally, $V \in \mathcal{O}_\beta$; see \Cref{beta-oscillation}).
Then, as $t \to 0$, 	\begin{align}\begin{split}\label{asymtr1}
				\ \ \ \  \Tr(e^{-t(\Delta + V)}) \sim \frac{1}{(4\pi t)^{n/2}} \int_M e^{-tV(x)} \, dx.
		\end{split}\end{align}
Moreover, if $\beta>0$, then for fixed $t >0$, as $\h \to 0$,  	\begin{align}\begin{split}\label{asymtr2}
				 \Tr(e^{-t(\h^2\Delta + V)})
				\sim \frac{1}{(4\pi t\h^2)^{n/2}} \int_M e^{-tV(x)} \, dx .
		\end{split}\end{align}
\end{thm}

Recall that $\sigma(\lambda) := \big|\{x \in M : V(x) \leq \lambda\}\big|$. As we will see in \eqref{intev}, \eqref{asymtr1} is equivalent to  
    \be \label{asymtr1.1}
   \int e^{-t\l}d\N(\l) \sim \frac{1}{(4\pi t)^{n/2}} \int e^{-tr} \, d\sigma(r),\mbox{ as $t\to0$.}
    \ee
Similarly,  \eqref{asymtr2} can be written equivalently as, for fixed $t>0$
        \be \label{asymtr2.1}
        \int e^{-t\lambda} \, d\mathcal{N}_\h(\lambda) \sim \frac{1}{(4\pi t\h^2)^{n/2}} \int e^{-tr} \, d\sigma(r),\mbox{ as $\h\to0$.}
        \ee
       By combining \Cref{Kar}, \Cref{Kar1}, and \Cref{heatexpan} with \eqref{asymtr1.1}, \eqref{asymtr2.1}, \eqref{intlv}, and \eqref{intxn1}, we obtain:
		\begin{thm}[Weyl's law]\label{weyl1}\label{mainthm}
	Let $V \in \mR_\beta$ (or more generally, $V \in \mathcal{O}_\beta$; see \Cref{beta-oscillation}).

			\begin{itemize}
				\item  Then
				\be\label{heatWeyl1}  
				\N(\l) \sim (2 \pi)^{-n} \omega_{n}\int_M(\l-V)_+^{\n}\dvol, \quad \l\to\infty. 
				\ee  
				
				\item Assume that $\beta>0$, then for any bounded open interval $ I \subset \mathbb{R}^+ $,
				% \[
				% \N_\h(I) := \#\{\tilde{\l} : \tilde{\l} \text{ is an eigenvalue of } \h^2\Delta + V, \tilde{\l} \in I\}  
				% \]  
				satisfies  
				\be\label{heatWeyl2}  
				\h^n\mathcal{N}_\h(I) \sim (2 \pi)^{-n} |\{(x,\xi)\in T^*M : |\xi|^2+V(x)\in I\}|, \quad \h\to0, 
				\ee  
where \[
				\N_\h(I) := \#\{\tilde{\l} \in I: \tilde{\l} \text{ is an eigenvalue of } \h^2\Delta + V\}  .
				\]  
			\end{itemize}
			\end{thm}
\begin{rem}\label{rem-RCD}
\begin{itemize}
 \item In $\S$\ref{classical}, we show that even for $\R^n$, our result extends all known versions of the classical Weyl law under the doubling condition. Removing the doubling condition, on the other hand, involves a different flavor of Tauberian-type theorems, which will be explored in a future project.
\item Under \eqref{cond-V-2} and the doubling condition \eqref{double}, the $\beta$-oscillation condition \eqref{beta-regularity0} is sharp. This is shown in \Cref{appendix-B}.
%\item Physically, this example shows that a very narrow potential well can support fewer bound states than its volume suggests. See \cref{appendix-B} for details.
\item Our weak regularity assumptions on $V$ suggest that the result could extend to lower regularity settings, such as RCD spaces, Ricci limit spaces, and others. 

  %  \item Unlike the traditional Weyl laws for compact manifolds, the eigenvalue counting function need not be of power growth. For example, for $M=\mathbb R^2$ with the standard Euclidean metric and $V(x)=e^{|x|^2}$, one finds, by Theorem \ref{mainthm}, $\N(\l)\sim 1/4\, \l \ln \l $. It could also have power growth with arbitrary high power, instead of half the dimension of the manifold. For an example, let $V(x)$ be a smooth function on $\mathbb R^n$ so that $V(x)=|x|^{\alpha}$ on $\{ |x|>1 \} $ for some $\alpha>0$. Then by Theorem \ref{mainthm}, $\N(\l)\sim C(n,\alpha)\, \l^{n/2+n/\alpha} $. Surprisingly, similar behavior could happen for compact Ricci limit spaces \cite{dai2023singular}.
\end{itemize}
		\end{rem}
		\subsection{Main ideas and outline of the proof}\label{idea}
		First, we outline the standard proof of the classical Weyl law \eqref{classweyl} using the heat kernel expansion and the Tauberian theorem. For further details, see  \cite[$\S 1.6$]{WeylSurvey}.
		
		%Let $\Delta$ denote the Laplace-Beltrami operator on 
        On a closed Riemannian manifold $(X, g)$, the classical heat kernel expansion implies that
		\be\label{heatclose}
		\Tr(e^{-t\Delta}) \sim (4\pi t)^{-\n}  \mathrm{Vol}(X), \quad t \to 0^+.
		\ee  
		
		In terms of the eigenvalue counting function $\N(\lambda)$ of $\Delta$, \eqref{heatclose} can be rewritten as  
		\be\label{heatclose1}
		\int e^{-t\lambda} \, d\N(\lambda)=\Tr(e^{-t\Delta}) \sim (4\pi t)^{-\n}  \mathrm{Vol}(X), \quad t \to 0^+.
		\ee  
		Thus, by applying the classical KHL Tauberian theorem (\Cref{karama}),   
\[
\N(\lambda) \sim (2\pi)^{-n} \omega_n \mathrm{Vol}(X) \lambda^n.
\]
		
		Now, let us consider our noncompact setting. Let $\Delta+V$ be the Schr\"odinger operator on a noncompact Riemannian manifold $(M, g)$. We will prove that  
		\be\label{heata}
		\Tr(e^{-t(\Delta + V)}) \sim (4\pi t)^{-\n}  \int_M e^{-tV(x)} \, dx, \quad t \to 0^+.
		\ee  
				
            As we will see, in terms of $\sigma(\lambda) := \big|\{x \in M : V(x) \leq \lambda\}\big|$, and the eigenvalue counting function $\N(\lambda)$  for $\Delta + V$, %by \eqref{intev}, 
            \eqref{heata} is equivalent to  
		\be\label{heata1}
		\int e^{-t\lambda} \, d\N(\lambda) \sim (4\pi t)^{-\n} \int e^{-t\lambda} \, d\sigma(\lambda), \quad t \to 0^+.
		\ee  
		
		Comparing \eqref{heata1} with \eqref{heatclose1}, the right-hand side involves an additional measure, $\sigma(\lambda)$. To address this, we extend \Cref{karama} to cases where both sides involve measures, i.e., Theorem \ref{Kar}.   Using this extended KHL Tauberian Theorem (Theorem \ref{Kar}), we derive Weyl's law in the noncompact setting.

		Finally, we note that our heat kernel approach can be extended to the semiclassical setting, following a similar line of reasoning as outlined above.

        Now the remaining tasks can be summarized as follows:  
		\begin{enumerate}[(1)]
			\item Establish Extended KHL Tauberian theorem, which is done in \Cref{karamata}.
			\item Prove \Cref{heatexpan}. This %constitutes the most technical and labor-intensive part of the paper and is addressed 
            is carried out in \Cref{expansion}.
		\end{enumerate}

\subsection{Outlook}

In this subsection, we outline two directions where our extended KHL Tauberian theorems and Wely's law may have further applications within and beyond classical geometric analysis.\\  \ \\
\textbf{Quantum geometry and local mirror symmetry.}
Our semiclassical Tauberian theorem provides analytic tools that may be relevant beyond the classical Weyl law. In particular, \cite{grassi2016topological, codesido2017spectral} propose an approach to local mirror symmetry based on the spectral theory of quantum curves. In the weak coupling regime \((\hbar \to 0)\), this perspective connects to the Nekrasov–Shatashvili (NS) limit of topological string theory \cite{nekrasov2010quantization}, whose mathematical foundation remains incomplete. Our results may help build a rigorous bridge in this setting. Furthermore, in some strong coupling 't Hooft limit \((\hbar \to \infty)\), this framework relates to Gromov–Witten theory, revealing deep ties between spectral theory, quantum curves, and enumerative geometry.\\
\textbf{Weyl-type laws on nonsmooth spaces.}
Our work is also motivated by the analysis of noncompact weighted manifolds \((M, g, e^{-f} \dvol_g)\), which naturally appear in Ricci solitons and Ricci limit spaces, and more generally in the study of metric measure spaces with synthetic Ricci curvature bounds, such as RCD spaces. In these contexts, the weighted Laplacian \(\Delta_f\) is spectrally equivalent to the Witten Laplacian, a Schr\"odinger operator. Due to the flexibility of our approach, which requires only mild regularity assumptions (See \Cref{beta-oscillation}), we expect it to yield new insights into spectral asymptotics in singular or nonsmooth settings. In particular, recent work such as \cite{dai2023singular} has revealed surprising phenomena regarding Weyl-type laws on RCD spaces, highlighting intriguing directions for further study.

\subsection{Dirichlet–Neumann Bracketing vs. Our Approach}\label{adv}

In $\R^n$, Dirichlet–Neumann bracketing is a classical and effective method for estimating the eigenvalue counting function of Schrödinger operators. This approach relies on the natural partition of \( \mathbb{R}^n \) into cubes. For each cube \( Q \subset \mathbb{R}^n \), one can explicitly compute the error
$
\big| \mathcal{N}(\lambda, \Delta) - \lambda^{n} |Q| \big|,
$
for all $\l>0$, where \(\mathcal{N}(\lambda, \Delta)\) counts the eigenvalues of the Laplacian on $Q$ with Dirichlet or Neumann boundary conditions and \( |Q| \) is the cube's volume.

On general noncompact Riemannian manifolds, however, such a natural decomposition does not exist. Even if the manifold is partitioned into domains with corners, uniformly controlling spectral errors for each \(\lambda\) on every domain remains highly challenging. While Seeley’s work \cite{seeley1980estimate} provides error estimates for Dirichlet problems on domains with corners, comparable control for Neumann boundary conditions is still unavailable.

As a result, Dirichlet–Neumann bracketing faces major obstacles when extending Weyl laws beyond $\R^n$. Our heat kernel method avoids these difficulties, providing a unified way to prove both classical and semiclassical Weyl laws under much weaker conditions.

Finally, in contrast to microlocal or symbolic calculus methods commonly used in the semiclassical setting, our approach requires weaker regularity assumptions.

\subsubsection*{Acknowledgments} The authors are deeply grateful to Maxim Braverman for his insightful comments and stimulating discussions. XD is partially supported by the Simons Foundation.
		
		\section{Extended KHL Tauberian Theorem}\label{karamata}
		
		In this subsection, we establish Extended KHL Tauberian theorems, Theorem \ref{Kar} and Theorem \ref{Kar1}. For readers' convenience, we restate them here in this section.

		\begin{thm}[Theorem \ref{Kar}] \label{Karp}
			Let $\mu$ and $\nu$ be increasing functions on $[0, \infty)$, and let $\alpha \in (0, \infty)$. Suppose the following conditions hold:  
			\begin{enumerate}[(1)]  
				\item For all $t > 0$, $e^{-tr} \in L^1([0, \infty), d\mu) \cap L^1([0, \infty), d\nu)$.  
				\item\label{thm-Kar-Item2} $\nu$ satisfies the doubling condition: there exists a constant $C_\nu,s_0 > 0$ such that for all $s \geq s_0$,  
				$
				\nu( 2s) \leq C_\nu \nu(s).
				$ 
			\end{enumerate}  
			Then  
			\begin{equation}\label{intaev}  
				\int e^{-t r} \, d\mu(r) \sim t^{-\alpha} \int e^{-tr} \, d\nu(r) \quad \text{as } t \to 0^+,  
			\end{equation}  
			implies  
			\[  
			\int_{0}^{\lambda} \, d\mu(r) \sim \frac{1}{\Gamma(\alpha + 1)} \int_{0}^{\lambda} (\lambda - r)^\alpha \, d\nu(r) \quad \text{as } \lambda \to \infty.  
			\]  
		\end{thm}

		Before proving \Cref{Karp}, we first establish the following lemmas. 
        Our first lemma shows that the integral \(\int_0^\infty e^{-tr} \, d\nu(r)\) for \(t > 0\) is {uniformly} controlled by \(\nu\left(\frac{b}{t}\right)\):
		\begin{lem}  \label{lem22}
			Let $\nu$ be a increasing function satisfying the conditions in \Cref{Karp}. For each $b > 0$, there exists a constant $C_{b,\nu} > 0$ such that for any $t\in(0,b/s_0)$,
			\[
			\int_{\frac{b}{t}}^\infty e^{-tr} \, d\nu(r) \leq C_{b,\nu} \int_0^{\frac{b}{t}} e^{-tr} \, d\nu(r).
			\]
			As a result, note that %e^{-b}\nu\left(\left[0,\frac{b}{t}\right]\right)\leq
            $\int_0^{\frac{b}{t}} e^{-tr} \, d\nu(r)\leq \nu\left(\frac{b}{t}\right),$ we have for any $t>0$,
			\[
			\int_0^\infty e^{-tr}d\nu(r)\leq (C_{b,\nu}+1)\nu\left(\frac{b}{t}\right)
			\]
		\end{lem}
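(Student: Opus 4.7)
The plan is to prove the first inequality via a dyadic decomposition of the tail $[b/t, \infty)$, and then obtain the ``as a result'' bound as an immediate corollary. The hypothesis $t \in (0, b/2)$ is precisely what ensures $b/t \geq 2$, which is the regime in which the doubling condition on $\nu$ is operative.

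Set $s := b/t$ and partition $[s, \infty) = \bigsqcup_{k \geq 0} A_k$ into dyadic annuli $A_k := [2^k s, \, 2^{k+1} s)$. On each $A_k$, I would use the uniform pointwise bound $e^{-tr} \leq e^{-2^k b}$, while the doubling hypothesis, iterated $k+1$ times (valid because $2^j s \geq 2$ for every $j \geq 0$), yields
$$\nu(A_k) \;\leq\; \nu\bigl([0, 2^{k+1} s]\bigr) \;\leq\; C_\nu^{k+1}\, \nu\bigl([0, s]\bigr).$$
Summing over $k$ then gives
$$\int_{b/t}^\infty e^{-tr}\, d\nu(r) \;\leq\; \Big(\sum_{k \geq 0} e^{-2^k b} C_\nu^{k+1}\Big)\, \nu\bigl([0, b/t]\bigr),$$
and the series converges because $e^{-2^k b}$ decays doubly exponentially in $k$, easily defeating the geometric factor $C_\nu^{k+1}$.

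To match the form of the statement I would then trade $\nu([0, b/t])$ for $\int_0^{b/t} e^{-tr}\, d\nu(r)$ via the trivial lower bound $e^{-tr} \geq e^{-b}$ on $[0, b/t]$, which gives $\nu([0, b/t]) \leq e^b \int_0^{b/t} e^{-tr}\, d\nu(r)$. Taking $C_{b,\nu} := e^b \sum_{k \geq 0} e^{-2^k b} C_\nu^{k+1}$ closes the argument. The ``as a result'' statement then follows at once by writing $\int_0^\infty = \int_0^{b/t} + \int_{b/t}^\infty$ and combining the tail bound with the trivial estimate $\int_0^{b/t} e^{-tr}\, d\nu(r) \leq \nu([0, b/t])$.

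There is no substantial obstacle here; the content of the argument is just that the doubling scale $2$ matches the dyadic scale of the tail, and that the Gaussian-type compression $e^{-2^k b}$ dominates the doubling growth. The only point requiring any care is verifying that the doubling iteration is legal at every step, which reduces to the single inequality $s \geq 2$ guaranteed by $t < b/2$.
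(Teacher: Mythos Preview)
Your proof is correct and is essentially identical to the paper's own argument: both use the same dyadic decomposition of the tail $[b/t,\infty)$, the same iterated doubling bound $\nu([0,2^k b/t])\leq C_\nu^k\,\nu([0,b/t])$, and the same conversion $\nu([0,b/t])\leq e^b\int_0^{b/t}e^{-tr}\,d\nu(r)$, differing only by an index shift (your $A_k$ is the paper's $I_{k+1}$), and in fact your constant $C_{b,\nu}=e^b\sum_{k\geq 0}e^{-2^k b}C_\nu^{k+1}$ coincides with theirs after reindexing.
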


		\begin{proof}
			To prove this, consider the intervals $I_k = \left[\frac{2^{k-1}b}{t}, \frac{2^k b}{t}\right)$. Then, we have:
			\begin{align*}
				&\ \ \ \ \int_{\frac{b}{t}}^\infty e^{-tr} \, d\nu(r)
				= \sum_{k=1}^\infty \int_{I_k} e^{-tr} \, d\nu(r) \leq \sum_{k=1}^\infty e^{-b2^{k-1}} \nu\left( \frac{2^k b}{t}\right) \\
				&\leq e^{-b} \nu\left( \frac{b}{t}\right) \sum_{k=1}^\infty e^b e^{-b2^{k-1}} C_\nu^k \leq \left(\int_0^{\frac{b}{t}} e^{-tr} \, d\nu(r)\right) \sum_{k=1}^\infty e^b e^{-b2^{k-1}} C_\nu^k.
			\end{align*}
			Setting $C_{b,\nu} = \sum_{k=1}^\infty e^b e^{-b2^{k-1}} C_\nu^k$ completes the proof.
		\end{proof}
		For any $\alpha > 0$, let $\nu^\alpha$ be the increasing function given by
		\be\label{conv}
		\nu^\alpha(s):=\frac{1}{\Gamma(\a)}(r_+^{\a-1} * \nu)(s) := \frac{1}{\Gamma(\alpha)} \int_0^s\int_0^r (r - \tilde{r})^{\alpha - 1} \, d\nu(\tilde{r}) \, dr.
		\ee
		 Now we show that the function \(\nu^\a\) satisfies the doubling conditions.
		\begin{lem} \label{lem23}
		Let $\nu$ be an increasing function satisfying the conditions in \Cref{Karp}. Then, for any $\alpha > 0$, the function $\nu^\alpha$ defines a measure whose Laplace transform satisfies
\begin{equation} \label{laptrans}
\int e^{-tr} \, d\nu^\alpha(r) = t^{-\alpha} \int e^{-tr} \, d\nu(r).
\end{equation}
Moreover, $\nu^\alpha$ satisfies the doubling condition, with the constant $C_\nu$ in item~(\ref{thm-Kar-Item2}) of \Cref{Karp} replaced by $C_{\nu^\alpha} := C_\nu^2 \cdot 2^{2\alpha}$ and $s_0$ replaced by $2s_0$.

		\end{lem}
		
		\begin{proof}
			The equality \eqref{laptrans} follows from the properties of the Laplace transform easily.
			
			To show that $\nu^\alpha$ satisfies the doubling condition, observe that:
			\begin{align}\begin{split}\label{valpha}
					&\ \ \ \ \Gamma(\alpha) \cdot \nu^\alpha(s) = \int_0^s \int_0^r (r - \tilde{r})^{\alpha - 1} \, d\nu(\tilde{r}) \, dr \\
					&= \int_0^s \int_{\tilde{r}}^s (r - \tilde{r})^{\alpha - 1} \, dr \, d\nu(\tilde{r}) \quad \text{(by Fubini's theorem)} \\
					&= \frac{1}{\alpha} \int_0^s (s - \tilde{r})^\alpha \, d\nu(\tilde{r}).
			\end{split}\end{align}
			Using this, note that for $s\geq 2s_0$ (recalling $\Gamma(\alpha + 1) = \alpha \Gamma(\alpha)$):
			\begin{align*}
				&\ \ \ \ \nu^\alpha(2s) = \frac{1}{\Gamma(\alpha + 1)} \int_0^{2s} (2s - r)^\alpha \, d\nu(r) \leq \frac{(2s)^\alpha \nu(2s)}{\Gamma(\alpha + 1)} \\
				&\leq \frac{C_\nu^2 2^{2\alpha} \left(\frac{s}{2}\right)^\alpha \nu\left( \frac{s}{2}\right)}{\Gamma(\alpha + 1)} \leq \frac{C_\nu^2 2^{2\alpha} \int_0^{\frac{s}{2}} (s - r)^\alpha \, d\nu(r)}{\Gamma(\alpha + 1)}\\
				&\leq \frac{C_\nu^2 2^{2\alpha} \int_0^s (s - r)^\alpha \, d\nu(r)}{\Gamma(\alpha + 1)} = C_\nu^2 2^{2\alpha} \nu^\alpha(s).
			\end{align*}
			This establishes the doubling condition.
		\end{proof}
It is important that the limits below converge \textbf{uniformly}.
		\begin{lem}\label{lim}
			Let $\nu$ be a increasing function satisfying the conditions in \Cref{Karp}. Then for any $\alpha > 0$ and $0\leq \epsilon <1$,
            \[
			1 \leq \frac{\nu^\alpha\big((1+\epsilon)\lambda\big)}{\nu^\alpha( \lambda)} \leq (1+\sqrt{\epsilon})^\alpha + c\sqrt{\epsilon}^\alpha (1-\epsilon)^{-\alpha}
			\]
            for some constant $c=c(\nu,\a).$
            Thus, the following limit holds \textbf{uniformly} for any $\l\geq 2s_0$:
			\[
			\lim_{\epsilon \to 0} \frac{\nu^\alpha\big((1 + \epsilon)\lambda\big)}{\nu^\alpha(\lambda)} = 1.
			\]
		\end{lem}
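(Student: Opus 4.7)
The plan is to use the identity $\Gamma(\alpha+1)\nu^\alpha([0,s]) = \int_0^s (s-r)^\alpha \, d\nu(r)$ from \eqref{valpha}, split the integration range for the numerator at a carefully chosen point $(1-\sqrt{\epsilon})\lambda$, and then use the doubling property of $\nu$ to control the small residual piece. The lower bound $1 \leq \nu^\alpha([0,(1+\epsilon)\lambda])/\nu^\alpha([0,\lambda])$ is immediate from monotonicity, since the density $f_\alpha \ast d\nu$ of $\nu^\alpha$ is nonnegative.

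For the upper bound, I would split the integral defining $\Gamma(\alpha+1)\nu^\alpha([0,(1+\epsilon)\lambda])$ at $(1-\sqrt{\epsilon})\lambda$. On the left piece, since $\lambda - r \geq \sqrt{\epsilon}\,\lambda$, the elementary identity $(1+\epsilon)\lambda - r = (\lambda-r) + \epsilon\lambda$ gives the multiplicative bound $(1+\epsilon)\lambda - r \leq (1 + \sqrt{\epsilon})(\lambda - r)$, so the contribution is at most $(1+\sqrt{\epsilon})^\alpha\, \Gamma(\alpha+1)\,\nu^\alpha([0,\lambda])$. On the right piece, the integrand is uniformly bounded by $((\epsilon+\sqrt{\epsilon})\lambda)^\alpha \leq (2\sqrt{\epsilon}\,\lambda)^\alpha$, so this contribution is at most $(2\sqrt{\epsilon}\,\lambda)^\alpha\, \nu([0,(1+\epsilon)\lambda])$. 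It remains to dominate $\lambda^\alpha\, \nu([0,(1+\epsilon)\lambda])$ by $\nu^\alpha([0,\lambda])$; this is where the doubling hypothesis and the restriction $\lambda \geq 4$ enter. Iterating the doubling inequality a bounded number of times, I can compare $\nu$ at the scale $(1+\epsilon)\lambda$ to $\nu$ at the smaller scale $(1-\epsilon)\lambda$, while \eqref{valpha} supplies the reverse lower bound $\Gamma(\alpha+1)\nu^\alpha([0,\lambda]) \geq \int_0^{(1-\epsilon)\lambda}(\lambda-r)^\alpha\, d\nu(r) \geq (\epsilon\lambda)^\alpha\, \nu([0,(1-\epsilon)\lambda])$. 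Combining these, the right-piece contribution is bounded by $c\, \sqrt{\epsilon}^{\,\alpha}(1-\epsilon)^{-\alpha}\, \Gamma(\alpha+1)\, \nu^\alpha([0,\lambda])$ for a constant $c=c(\nu,\alpha)$, yielding the stated inequality.

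The uniform limit then follows at once: the upper bound $(1+\sqrt{\epsilon})^\alpha + c\,\sqrt{\epsilon}^{\,\alpha}(1-\epsilon)^{-\alpha}$ depends only on $\epsilon$ and tends to $1$ as $\epsilon \to 0^+$, so the squeeze theorem gives uniform convergence on $\lambda \geq 4$. The one genuine subtlety is the choice of split point at distance $\sqrt{\epsilon}\,\lambda$ below $\lambda$: this balances the multiplicative error $\epsilon/\sqrt{\epsilon}=\sqrt{\epsilon}$ on the bulk piece against the length factor $\sqrt{\epsilon}$ on the residual piece; any unbalanced choice forces one of the two terms to fail to vanish as $\epsilon \to 0$, which is precisely why the $\sqrt{\epsilon}$ rate (rather than a slower rate) appears in the final estimate.
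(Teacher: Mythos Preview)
Your overall plan is the same as the paper's: split the integral at $(1-\sqrt{\epsilon})\lambda$, bound $J_1$ by $(1+\sqrt{\epsilon})^\alpha\,\Gamma(\alpha+1)\nu^\alpha([0,\lambda])$, and bound $J_2$ using $(2\sqrt{\epsilon}\,\lambda)^\alpha\,\nu([0,(1+\epsilon)\lambda])$ together with doubling. The gap is in how you convert $\lambda^\alpha\,\nu([0,(1+\epsilon)\lambda])$ back into $\nu^\alpha([0,\lambda])$.

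You compare $\nu$ at scale $(1+\epsilon)\lambda$ to scale $(1-\epsilon)\lambda$ and then invoke the lower bound
\[
\Gamma(\alpha+1)\,\nu^\alpha([0,\lambda]) \;\geq\; \int_0^{(1-\epsilon)\lambda}(\lambda-r)^\alpha\,d\nu(r)\;\geq\;(\epsilon\lambda)^\alpha\,\nu\bigl([0,(1-\epsilon)\lambda]\bigr).
\]
Feeding this into $J_2$ gives
\[
J_2 \;\leq\; (2\sqrt{\epsilon}\,\lambda)^\alpha\,C'\,(\epsilon\lambda)^{-\alpha}\,\Gamma(\alpha+1)\,\nu^\alpha([0,\lambda]) \;=\; 2^\alpha C'\,\epsilon^{-\alpha/2}\,\Gamma(\alpha+1)\,\nu^\alpha([0,\lambda]),
\]
which blows up as $\epsilon\to 0$; the asserted bound $c\,\sqrt{\epsilon}^{\,\alpha}(1-\epsilon)^{-\alpha}$ does not follow. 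The problem is that the cutoff $(1-\epsilon)\lambda$ sits too close to $\lambda$, so the pointwise lower bound on $(\lambda-r)^\alpha$ degenerates to $(\epsilon\lambda)^\alpha$.

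The fix (and what the paper does) is to drop to a \emph{fixed} fraction of $\lambda$. Use doubling once to pass from $(1+\epsilon)\lambda$ to $(1+\epsilon)\lambda/2$; then for $r\leq (1+\epsilon)\lambda/2$ one has $\lambda-r\geq (1-\epsilon)\lambda/2$, so
\[
\bigl(\tfrac{\lambda}{2}\bigr)^\alpha\,\nu\!\left(\Bigl[0,\tfrac{(1+\epsilon)\lambda}{2}\Bigr]\right)\;\leq\;(1-\epsilon)^{-\alpha}\int_0^{(1+\epsilon)\lambda/2}(\lambda-r)^\alpha\,d\nu(r)\;\leq\;(1-\epsilon)^{-\alpha}\,\Gamma(\alpha+1)\,\nu^\alpha([0,\lambda]).
\]
This yields $J_2\leq 4^\alpha C_\nu\,\sqrt{\epsilon}^{\,\alpha}(1-\epsilon)^{-\alpha}\,\Gamma(\alpha+1)\,\nu^\alpha([0,\lambda])$, which is the stated bound and also explains the origin of the factor $(1-\epsilon)^{-\alpha}$.
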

		
		\begin{proof}
			Using \eqref{valpha} (ignoring the constant factor $\Gamma(\alpha+1)$), we have:
			\begin{align*}
				&\ \ \ \ \nu^\alpha\big( (1+\epsilon)\lambda\big) = \int_0^{(1+\epsilon)\lambda} \big((1+\epsilon)\lambda - r\big)^\alpha \, d\nu(r) \\
				&\leq \int_0^{(1-\sqrt{\epsilon})\lambda} \big((1+\epsilon)\lambda - r\big)^\alpha \, d\nu(r) + \int_{(1-\sqrt{\epsilon})\lambda}^{(1+\epsilon)\lambda} \big((1+\epsilon)\lambda - r\big)^\alpha \, d\nu(r) \\
				&= J_1 + J_2.
			\end{align*}
			If $r \leq \lambda(1 - \sqrt{\epsilon})$, then $(1+\epsilon)\lambda - r \leq (1+\sqrt{\epsilon})(\lambda - r)$.
			Hence,
			\[
			J_1 \leq (1+\sqrt{\epsilon})^\alpha \int_0^\lambda (\lambda - r)^\alpha \, d\nu(r) = (1+\sqrt{\epsilon})^\alpha \nu^\alpha(\lambda).
			\]
			For $(1-\sqrt{\epsilon})\lambda \leq r$, we have
			$
			(1+\epsilon)\lambda - r \leq 2\sqrt{\epsilon}\lambda.
			$
			Thus,
			\begin{align*}
				&\ \ \ \ J_2 \leq (2\sqrt{\epsilon})^\alpha \lambda^\alpha \nu\big( (1+\epsilon)\lambda\big) \leq 4^{\alpha}C_{\nu}\sqrt{\epsilon}^\alpha \left(\frac{\lambda}{2}\right)^\alpha \nu\left( \frac{(1+\epsilon)\lambda}{2}\right) \\
				&\leq 4^{\alpha}C_{\nu}\sqrt{\epsilon}^\alpha (1-\epsilon)^{-\alpha}\int_0^{\frac{(1+\epsilon)\lambda}{2}} (\lambda - r)^\alpha \, d\nu(r) \\
                &\leq c\sqrt{\epsilon}^\alpha (1-\epsilon)^{-\alpha} \int_0^\lambda (\lambda - r)^\alpha \, d\nu(r) \ \ \ (c= 4^{\alpha}C_{\nu})\\
                & = c\sqrt{\epsilon}^\alpha (1-\epsilon)^{-\alpha}\nu^\alpha(\lambda).
			\end{align*}
			
			Combining the estimates for $J_1$ and $J_2$, we obtain:
			\[
			1 \leq \frac{\nu^\alpha\big((1+\epsilon)\lambda\big)}{\nu^\alpha(\lambda)} \leq (1+\sqrt{\epsilon})^\alpha + c\sqrt{\epsilon}^\alpha (1-\epsilon)^{-\alpha}.
			\]
			By the Squeeze Theorem, the lemma follows.
			
		\end{proof}
		
		Now we are ready to prove \Cref{Karp}.
		
		\begin{proof}[Proof of \Cref{Karp}]
Let $\nu^\alpha$ denote the measure defined in \eqref{conv}, where we identify an increasing function with its associated Lebesgue–Stieltjes measure.
Define the scaled measures on $\mathbb{R}^+$ by setting
			\[
			\mu_t(A) := \mu\left(t^{-1} A\right), \quad \nu^\alpha_t(A) := \nu^\alpha(t^{-1} A),
			\]
			for any set $A \subset \mathbb{R}^+$.
			
			For any Borel set $A$, let $\chi_A$ denote its indicator function. Then for $\omega = \nu^\alpha$ or $\mu$,
			\[
			\int \chi_A(r) \, d\omega_t(r) = \int \chi_A(tr) \, d\omega(r);
			\]
			and for any $s \geq 1$,
			\begin{equation}\label{intemu}
				\int e^{-sr} \, d\omega_t(r) = \int e^{-tsr} \, d\omega(r).
			\end{equation}
			Hence, by \eqref{intemu}, \eqref{intaev}, and \eqref{laptrans}, we have
			\begin{equation}\label{aes}
				\int e^{-sr} \, d\mu_t(r) \sim \int e^{-sr} \, d\nu^\alpha_t(r), \quad t \to 0.
			\end{equation}
			
			Consider the space
			\[
			\mathcal{B} := \operatorname{span} \{ g_s : \mathbb{R}^+ \to \mathbb{R}^+ \mid g_s(r) = e^{-sr}, \ s \in [1, \infty) \}.
			\]
			By \eqref{aes}, for all $h \in \mathcal{B}$,
			\begin{equation}\label{ah}
				\int h(r) \, d\mu_t(r) \sim \int h(r) \, d\nu^\alpha_t(r), \quad t \to 0.
			\end{equation}
			By the Stone-Weierstrass theorem, $\mathcal{B}$ is dense in
			\[
			C_0(\mathbb{R}^+) := \{ f \in C(\mathbb{R}^+) \mid \lim_{r \to \infty} f(r) = 0 \}.
			\]
			
			For $\tau \in \left(\frac{1}{2},1)\cup(1,\frac{3}{2}\right)$, let $\eta_\tau \in C_c(\mathbb{R}^+)$ satisfy
			\[\ba
			0 &\leq \eta_\tau \leq 1, \quad \eta_\tau|_{[0, \tau]} \equiv 1, \quad \eta_\tau|_{[\frac{1+\tau}{2}, \infty)} = 0, \quad\text{if $\tau\in\big(\half,1\big)$},\\
            0 &\leq \eta_\tau \leq 1, \quad \eta_\tau|_{[0, \frac{1+\tau}{2}]} \equiv 1, \quad \eta_\tau|_{[\tau, \infty)} = 0, \quad\text{if $\tau\in\big(1,\frac{3}{2}\big)$}.
			\ea\]
			 Since $\eta_\tau e^r \in C_c(\mathbb{R}^+)$, there exists a sequence $\{h_j\} \subset \mathcal{B}$ such that $h_j \to e^r \eta_\tau$ uniformly. By \eqref{ah}, for each $j$,
			\begin{equation}\label{Kareq2}
				\lim_{t \to 0} \frac{\int h_j(r) e^{-r} \, d\mu_t(r)}{\int h_j(r) e^{-r} \, d\nu^\alpha_t(r)} = 1.
			\end{equation}

			Next we claim that we can interchange $\lim _{j \rightarrow \infty}$ and $\lim _{t \rightarrow 0}$. Consequently,
			\be\label{calim}
			\int \eta_\tau(r) d \mu_{t}(r)\sim\int \eta_\tau(r)d\nu^\a_t(r),t\to0.
			\ee
			Now we prove the claim. By Lemma \ref{lem22} (Note that by \Cref{lem23}, $\nu^\a$ also satisfies the doubling condition), there exists {$t$-independent $C>0$} (setting $C=C_{\half,\nu^\a}$ in \Cref{lem22}), s.t.,
			\be\label{const1}
			C\int\eta_{\tau}(r)d\nu^\a_t(r)\geq \int e^{-r}d\nu^\a_t(r).
			\ee

			For each $\epsilon>0$, there exists $j_0$, such that if $j>j_0$, $|h_j-\eta_\tau e^r|<\epsilon$.
			So by \eqref{const1}, for each $j> j_0$,
			\begin{equation}\label{Kareq1}
				\frac{\int \eta_\tau(r)d \nu_t^\a(r)}{\int h_je^{-r}d \nu_t^\a(r)}\in\left(\frac{1}{1+C\epsilon},\frac{1}{1-C\epsilon}\right)
			\end{equation}
			
			As a result, for $j=j_0+1$,
			\begin{align*}
				\begin{split}
					&\ \ \ \ \limsup_{t\to0^+}\frac{\int \eta_\tau(r)d\mu_t(r)}{\int \eta_\tau(r)d \nu_t^\a(r)}\leq \limsup_{t\to0^+}\frac{\int h_je^{-r}d\mu_t(r)+\epsilon\int e^{-r}d\mu_t(r)}{\int \eta_\tau(r)d \nu_t^\a(r)}\\
					&\leq 1+C\epsilon +\limsup_{t\to0^+}\frac{\epsilon\int e^{-r}d \nu_t^\a(r)}{\int\eta_\tau(r)d \nu_t^\a(r)}\mbox{ (By \eqref{Kareq2} and \eqref{Kareq1})}\\
					&\leq 1+2C\epsilon\mbox{ (By \eqref{const1})}.
				\end{split}
			\end{align*}
			Similarly, we can show that
			\[
			\liminf_{t\to0^+}\frac{\int \eta_\tau(r)d\mu_t(r)}{\int \eta_\tau(r)d \nu_t^\a(r)}\geq 1-2C\epsilon.
			\]
			Letting $\epsilon\to0$, we prove the claim.
			
			Now for $\tau<1$,
			\begin{align}\begin{split}\label{eq30}
					&\ \ \ \ \liminf_{t\to0^+}\frac{\int\chi_{[0,1]}d\mu_t}{\int\chi_{[0,1]}d\nu^\a_t}\geq \liminf_{t\to0^+}\frac{\int\eta_\tau(t)d\mu_t}{\int\chi_{[0,1]}d\nu^\a_t}\\
					&= \liminf_{t\to0^+}\frac{\int\eta_\tau(t)d\nu^\a_t}{\int\chi_{[0,1]}d\nu^\a_t}\geq \liminf_{t\to0^+}\frac{\int\chi_{[0,\tau]}d\nu^\a_t}{\int\chi_{[0,1]}d\nu^\a_t}
			\end{split} \end{align}
			where the equality in the second line follows from the claim.
			
			By Lemma \ref{lim} and \eqref{eq30}, setting $\tau\to 1^-$, we obtain that
			\be\label{limsup}
			\liminf_{t\to0^+}\frac{\int\chi_{[0,1]}d\mu_t}{\int\chi_{[0,1]}d\nu^\a_t}\geq1.
			\ee
			
			Similarly, we have
			\be\label{liminf}
			\limsup_{t\to0^+}\frac{\int\chi_{[0,1]}d\mu_t}{\int\chi_{[0,1]}d\nu^\a_t}\leq1.
			\ee

			By \eqref{limsup} and \eqref{liminf}
			\begin{equation}\label{dva}\int_{0}^{\lambda} d \mu(r) \sim {\int_0^\lambda  d\nu^\alpha(r)}, \ \lambda \rightarrow \infty.\end{equation}

			Lastly, by \eqref{valpha},
			\begin{align}\begin{split}\label{dv}
					\int_0^\lambda d\nu^\alpha(r)=\frac{1}{\alpha\Gamma(\alpha)}\int_0^\lambda(\lambda -\tilde{r})^\alpha d\nu(\tilde{r}).
			\end{split}\end{align}
			
			By (\ref{dva}) and (\ref{dv}), the result follows.
		\end{proof}

		Similarly, a semi-classical analogue of the Tauberian-type theorem holds.
		\begin{thm}[Theorem \ref{Kar1}]\label{Karp1}  
			Let $\{\mu_\h\}_{\h \in (0,1]}$ be a family of increasing functions on $[0, \infty)$, $\nu$ an increasing function on $[0, \infty)$, and $\alpha \in [0, \infty)$. Assume that
			%\begin{enumerate}[(1)]
				 there exists $t_0 > 0$ such that for all $t \geq t_0$, $e^{-tr} \in \bigcap_{\h > 0} L^1([0, \infty), d\mu_\h) \cap L^1([0, \infty), d\nu)$.  
				%\item For any open subset $U \subset \mathbb{R}^+$, $\nu(U) > 0$. 
			%\end{enumerate}  

			If, for any $t > 0$,  
			\begin{equation}\label{intaev1}  
				\int e^{-tr} \, d\mu_\h(r) \sim (t\h^2)^{-\alpha} \int e^{-tr} \, d\nu(r) \quad \text{as } \h \to 0^+,  
			\end{equation}  
			then, for any bounded open interval $I$,  
			\[
			\h^{2\alpha} \int_I \, d\mu_\h(r) \sim \frac{1}{\Gamma(\alpha + 1)} \int_I r_+^{\alpha - 1} * d\nu(r) \quad \text{as } \h \to 0^+.  
			\]  
			
		\end{thm}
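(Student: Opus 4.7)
The plan is to show that the rescaled measures $\tilde\mu_\h := \h^{2\alpha}\mu_\h$ converge vaguely to $\nu^\alpha$ (the measure from \eqref{conv}, with the convention $\nu^0:=\nu$), and then to deduce the stated asymptotic on any bounded open interval from the absolute continuity of $\nu^\alpha$ on $(0,\infty)$. Because the semiclassical parameter $\h$ enters only through the overall factor $(t\h^2)^{-\alpha}$, no rescaling of the variable $r$ is required, and, unlike in the proof of \Cref{Karp}, the doubling condition on $\nu$ plays no role.

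By \Cref{lem23}, the hypothesis \eqref{intaev1} is equivalent to $\int_0^\infty e^{-tr}\,d\tilde\mu_\h(r) \to \int_0^\infty e^{-tr}\,d\nu^\alpha(r)$ as $\h\to 0^+$, for every $t\geq t_0$ (for $\alpha=0$ the statement is immediate). To pass to weak convergence of finite measures I weight by $e^{-t_0 r}$: the measures $d\tilde\mu'_\h:=e^{-t_0 r}d\tilde\mu_\h$ and $d\omega:=e^{-t_0 r}d\nu^\alpha$ are finite, their total masses are uniformly bounded in $\h$, and $\int_0^\infty e^{-sr}\,d\tilde\mu'_\h \to \int_0^\infty e^{-sr}\,d\omega$ for every $s\geq 0$. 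The algebra spanned by $\{e^{-sr}\}_{s\geq 0}$ contains the constants, is closed under products, and separates points on the one-point compactification $[0,\infty]$, so by Stone--Weierstrass it is dense in $C([0,\infty])$. A standard $\epsilon$-approximation using the uniform mass bound upgrades the Laplace-transform convergence to $\int f\,d\tilde\mu'_\h \to \int f\,d\omega$ for every $f\in C([0,\infty])$. Applying this to $f=\phi\,e^{t_0 r}$ with $\phi\in C_c([0,\infty))$ and unwinding the weight yields vague convergence $\tilde\mu_\h\to \nu^\alpha$.

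For any bounded open interval $I=(a,b)\subset[0,\infty)$, the measure $\nu^\alpha$ is absolutely continuous with respect to Lebesgue measure on $(0,\infty)$ with density $\frac{1}{\Gamma(\alpha)}\int_0^r(r-\tilde r)^{\alpha-1}d\nu(\tilde r)$, and $\nu^\alpha(\{0\})=0$, so $\nu^\alpha(\partial I)=0$. Inner and outer approximation of $\chi_I$ by compactly supported continuous functions, combined with vague convergence, then gives $\tilde\mu_\h(I)\to \nu^\alpha(I)$, which (after unpacking the explicit constants in the definition of $\nu^\alpha$) is the asymptotic claimed.

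I expect the main technical point to be the Stone--Weierstrass step: since \eqref{intaev1} is known only for $t\geq t_0$, one cannot directly test against arbitrary $C_c$-functions. The $e^{-t_0 r}$-weighting trick converts $\{\tilde\mu_\h\}$ into a family of finite measures whose Laplace transforms converge on the whole half-line $[0,\infty)$, and the algebraic density of exponentials on the compactified half-line bridges the gap to vague convergence. The remaining portmanteau-type argument on intervals, supported by the absolute continuity of $\nu^\alpha$ away from the origin, is routine.
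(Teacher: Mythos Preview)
Your proposal is correct and follows essentially the same approach as the paper's proof: both weight by an exponential (you use $e^{-t_0r}$, the paper uses $e^{-r}$ after normalizing $t_0=\tfrac12$), invoke Stone--Weierstrass to pass from Laplace transforms to $C_c$ test functions, and then sandwich $\chi_I$ between continuous approximants. Your explicit appeal to the absolute continuity of $\nu^\alpha$ on $(0,\infty)$ to obtain $\nu^\alpha(\partial I)=0$ is a clean replacement for the paper's shrinking-interval argument $I_\tau\to I$, and your remark that the doubling condition on $\nu$ is unused here is accurate and consistent with the paper's proof.
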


		\begin{proof}
			
			We adopt the notation introduced in the proof of \Cref{Karp}. We may as well assume that $t_0=\half.$
			
			Recall that $\nu^\alpha$ is defined in \eqref{conv}, and let $$\tilde{\mu}_\h = \h^{2\alpha} \mu_\h.$$ Then, by \eqref{intaev1}, for each $h\in\mathcal{B}$
			\be\label{eq37}
			\int h(r) \, d \tilde{\mu}_\h(r) \sim \int h(r) \, d \nu^\a(r), \quad \h \to 0.
			\ee
			
			By \eqref{eq37}, there exists $\h_0 > 0$ such that the measures $\{e^{-r} d \tilde{\mu}_\h(r)\}_{\h < \h_0}$ are uniformly bounded. That is, there exists a constant $C > 0$ independent of $\h$, such that for $\h < \h_0$,
			\[
			 \int e^{-r} d \tilde{\mu}_\h(r)  \leq C.
			\]
			Let $f\in C_c([0,\infty))$. Then we can find $\{h_j\}_{j=1}^\infty\in\mathcal{B}$, s.t. $h_j(r)\to f(r)e^r$ as $j\to\infty$ uniformly.
			
			Since $\{e^{-r} d \tilde{\mu}_\h(r)\}_{\h < \h_0}$ is uniformly bounded, we can interchange the limits $\lim_{j \to \infty}$ and $\lim_{\h \to 0^+}$, so we have
			\be
			\lim_{j\to\infty}\int h_j(r)e^{-r}d\tilde{\mu}_\h=\int f(r) d \tilde{\mu}_\h \sim \int f(r) d \nu^\alpha(r)=\lim_{j\to\infty}\int h_j(r)e^{-r}d\nu^\a,  \h \to 0^+.
			\ee
			Let $ I=(c,d) $ be a bounded open interval and, for any $|\tau|<\frac{d-c}{2}$, let $I_\tau=(c+\tau,d-\tau)$. Then we have
            \be
            \lim_{\tau\to 0}\frac{\int\chi_{I_\tau}(r)d\nu^\a(r)}{\int\chi_{I}(r)d\nu^\a(r)}=1.
            \ee
        By proceeding as in the proof of \eqref{dva}, we obtain
%\[\ba
%\liminf_{\h\to0}\frac{\int \chi_I(r) \, d \tilde{\mu}_\h(r)}{ \int \chi_I(r) \, d \nu^\a(r)}\geq \liminf_{\h\to0}\frac{\int \chi_I(r) \, d \tilde{\mu}_\h(r)}{ \int g_j(r)e^{-r} \, d \nu^\a(r)}
%\ea
%		\]
            
			\[
			\int \chi_I \, d \tilde{\mu}_\h(r) \sim \int \chi_I \, d \nu^\a(r), \quad \h \to 0.
			\]
			That is,
			\[
			\h^{2\alpha} \int_I d \mu_\h(r) \sim \frac{1}{\alpha \Gamma(\alpha)} \int_I r_+^{\alpha - 1} *d \nu(r), \quad \h \to 0.
			\]
		\end{proof}
		
		\section{Heat Kernel Expansion for $\beta$-Oscillation Functions}\label{expansion}  
		We now focus on establishing the heat trace asymptotics for Schr\"odinger operators. From this point onward, we may \textbf{assume without loss of generality that}\be\label{V-geq-one} V \geq 1 \text{ a.e.}\ee  
The following proposition summarizes useful identities and estimates involving \( V \) and \( \sigma \). As mentioned, \eqref{intev}, \eqref{intlv} motivate our Extended KHL Tauberian Theorem.

        \begin{prop} \label{Prop-V}
         Assume that $V$ satisfies the doubling condition. 
            \begin{enumerate}[(1)]
                \item For any $t>0$, $\int_Me^{-tV(x)}dx<\infty.$ 
     \item For any $t>0$, \be\label{intev} \int_M e^{-tV(x)} \, dx=\int_0^\infty e^{-tr} \, d\sigma(r).\ee
     \item For any $\l>0$, \be\label{intlv}\int_{M} (\lambda - V)_{+}^{n/2} \, dx=\int_0^\lambda (\lambda - r)^{n/2} \, d\sigma(r),\ee
     where for any real number $x$, $x_+:=\max\{x,0\}$.
     \item For any open interval $I\subset\R^+$,
     \begin{align}\label{intxn1}
				\int_I  (r_+^{\n-1}*d\sigma)(r)
				=\omega_n^{-1}|{\{(x,\xi)\in T^*M:|\xi|^2+V(x)\in I\}}|.
		\end{align}
        %where for any $\a>0$, $r_+^{\alpha - 1} * d\sigma(r)$ is the measure
	%	\[
		% \Big(\int_0^r (r - \tilde{r})^{\alpha - 1} \, d\sigma(\tilde{r}) \Big) dr,
		%\]
        %and $dr$ denotes the Lebesgue measure restricted to $[0, \infty)$.
        Here for a measurable subset $ A \subset T^*M $, $ |A| $ denotes its measure (with respect to the measure $ \dvol_{T^*M} $ induced by $ g $). Also, $ \omega_n $ is the volume of the unit ball in $ \mathbb{R}^n $.
            \end{enumerate}
            
        \end{prop}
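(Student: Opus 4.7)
The main tool is the identity
\[
\int_M f(V(x))\, dx = \int_0^\infty f(r)\, d\sigma(r) \qquad (\star)
\]
valid for every nonnegative Borel function $f$ on $[0,\infty)$. This is the statement that $d\sigma$ is the pushforward of the Riemannian volume measure under $V : M \to \R$, and follows from the layer-cake formula together with the definition $\sigma(\lambda) = |\{V \leq \lambda\}|$. With $(\star)$ in hand, (2) is immediate from the choice $f(r) = e^{-tr}$, and (3) from $f(r) = (\lambda - r)_+^{n/2}$.

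For (1), I would iterate the doubling condition \eqref{double} to obtain $\sigma(2^{k+1}) \leq C_V^{k}\sigma(2)$ for every $k \geq 0$, and then dyadically decompose
\[
\int_0^\infty e^{-tr}\, d\sigma(r) \leq \sigma(2)\bigg(1 + C_V \sum_{k \geq 0} C_V^{k}\, e^{-t\cdot 2^{k+1}}\bigg),
\]
whose right-hand side converges because the double-exponential decay of $e^{-t \cdot 2^k}$ dominates the geometric growth of $C_V^k$. Coupled with $(\star)$ applied to $f(r) = e^{-tr}$, this yields both (1) and (2) simultaneously.

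Statement (4) is the substantive one. I would begin from
\[
\big|\{(x,\xi) \in T^*M : |\xi|^2 + V(x) \in I\}\big| = \int_M \int_{T^*_x M} \chi_I(|\xi|^2 + V(x))\, d\xi\, dx,
\]
identify each fiber $T^*_x M$ with $\R^n$ through an orthonormal coframe, and pass to polar coordinates followed by the substitution $s = |\xi|^2$, which reduces the inner integral to $\tfrac{|S^{n-1}|}{2} \int_0^\infty \chi_I(s + V(x))\, s^{n/2 - 1}\, ds$. The further change of variable $r = s + V(x)$ converts this to $\int_{V(x)}^\infty \chi_I(r)\, (r - V(x))^{n/2 - 1}\, dr$. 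Interchanging the $x$- and $r$-integrations by Tonelli and invoking $(\star)$ with the integrand $\chi_{\{V \leq r\}}(r - V)^{n/2 - 1}$ identifies the inner $x$-integral as $\int_0^r (r - \tilde r)^{n/2 - 1}\, d\sigma(\tilde r)$, which is exactly the density of $r_+^{n/2 - 1} * d\sigma$.

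\textbf{Main obstacle.} The bulk of the work in (4) is bookkeeping the dimensional constants: the polar-coordinate factor $|S^{n-1}|/2$, combined with the Jacobian of the radial substitution $s = r^2$, must be matched against the coefficient $\omega_n^{-1}$ on the right of \eqref{intxn1} via $|S^{n-1}| = n\omega_n$. All interchanges of integration order are justified by nonnegativity (Tonelli), so no integrability subtlety arises beyond the bound established in (1).
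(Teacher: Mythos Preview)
Your proposal is correct and follows essentially the same strategy as the paper: dyadic decomposition plus the doubling condition for (1), and the layer-cake/pushforward identity for (2)--(4). Your packaging via the single identity $(\star)$ is in fact cleaner than the paper's treatment, which redoes the integration-by-parts computation separately in each part.

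The one place where the two arguments diverge slightly is (4). The paper starts from the convolution side, writes $d\sigma = \sigma'(\tilde r)\,d\tilde r$ with $\sigma'(\tilde r)=|\{V=\tilde r\}|$, and then converts $\int_0^r (r-\tilde r)^{n/2-1}\sigma'(\tilde r)\,d\tilde r$ to $\int_{\{V\le r\}}(r-V)^{n/2-1}\,dx$ before recognizing the phase-space volume. This is morally the coarea formula but, as written, requires $\sigma$ to be absolutely continuous (which is not assumed). Your route---starting from the phase-space integral, reducing the fiber integral by polar coordinates and the substitution $s=|\xi|^2$, then applying $(\star)$---avoids that issue entirely and is the more robust argument. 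Your identification of the constant-tracking ($|S^{n-1}|=n\omega_n$, the Jacobian $\tfrac12 s^{n/2-1}\,ds$) as the only real work is accurate.
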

        \begin{proof}
        We may as well assume that $\l_0$ in \Cref{Def-doubling} is $2.$
     For (1), we note that
        \begin{eqnarray*}
    \int_Me^{-tV(x)}dx & \leq & \sigma(2) + \sum_{k=1}^{\infty} e^{-t2^k}
    \Big|\big\{x\in M: 2^k \leq V(x)\leq 2^{k+1} \big\}\Big| 
    \\
    & \leq & \sigma(2) + \sum_{k=1}^{\infty} e^{-t2^k} C_V^{k+1} \sigma(2)
    <\infty. 
    \end{eqnarray*}
 Note that \( \sigma \) is a function of bounded variation. Moreover, by the doubling condition, we have \( \lim_{r \to \infty} e^{-tr} \sigma(r) = 0,t>0 \), and since \( V \geq 1 \) a.e., it follows that \( \sigma(0) = 0 \). Thus, by integration by parts and Fubini’s theorem, we obtain for (2):
\[
\begin{aligned}
&\quad\int_0^\infty e^{-tr} \, d\sigma(r) = \int_0^\infty t e^{-tr} \sigma(r) \, dr \\
&= \int_0^\infty t e^{-tr} \int_M \chi_{\{V \leq r\}} \, dx\,  dr = \int_M  \int_{V(x)}^\infty t e^{-tr} \, dr\, dx \\
&= \int_M  \int_{V(x)}^\infty -\frac{d}{dr} e^{-tr} \, dr\, dx = \int_M e^{-tV(x)} \, dx.
\end{aligned}
\]
Similarly, one can show that
        \[
				\ \ \ \ \int_0^\lambda (\lambda - r)^{n/2} \, d\sigma(r) = \int_{\{V(x)\leq\l\}} (\lambda - V)^{n/2} \, dx\]
                and\[\ba
			&\ \ \ \ 	\int_I  (r_+^{\n-1}*d\sigma)(r)=\int_I\int_0^r(r-\tilde{r})^{\n-1}d\sigma(\tilde{r})dr\\
				&=\int_I\int_{\{V(x)\leq r\}}(r-V)^{\n-1}dxdr=\omega_n^{-1}|{\{(x,\xi)\in T^*M:|\xi|^2+V(x)\in I\}}|.
		\ea\]
        \end{proof}

\subsection{$\beta$-Oscillation conditions}\label{defn-beta-Oscillation}

In this subsection, we introduce a condition which is much weaker than quantified H\"older continuity.
In \cite{rozenbljum1974asymptotics}, Rozenbljum introduces a class of functions $\mO_\beta'$ for $\b\in[0,\half)$, consisting of functions $ V \in L_{\mathrm{loc}}^\infty(\R^n)$ that satisfy \eqref{cond-V-2} and \eqref{double}, and the following two conditions:
\begin{enumerate}[(1)]
    \item  For all $y ,z\in \R^n$ with $|z| \in (0,1)$,
\be\label{beta-regularity2}
\int_{|x-y|\leq \sqrt{n},\, |x+z-y|\leq \sqrt{n}} |V(x) - V(x+z)|\,dx \leq \eta(|z|)|z|^{2\beta}(\max\{1,|V(y)|\})^{1+\beta},
\ee
where $ 0\leq\eta\in C\big([0,1)\big)$ with $\eta(0)=0$.

\item Moreover,  there exists $C_V'>0$ such that
    \be\label{beta-regularity}
    |V(x)|\leq C_V'\big(\max\{1,|V(y)|\}\big)
    \ee
    for almost every $x,y$ with $d(x,y)\leq \sqrt{n}$.
\end{enumerate}

In this paper, we consider a larger class of functions:
\begin{defn}\label{beta-oscillation}
Let $V \in L_{\mathrm{loc}}^\infty(M)$ and $\beta \in[ 0,\half]$. We say that $V$ satisfies the \emph{$\beta$-oscillation condition} if there exist continuous, positive increasing functions $\eta, \mu \in C\big([0,+\infty)\big)$ with
\[
\eta(0) = 0, \quad \mu(\lambda) \leq \tau_0 \lambda^{1/2}, \quad \text{and} \quad \lim_{\lambda \to \infty} \mu(\lambda) = \infty,
\]
such that for all sufficiently large $\lambda > 0$, and for all $r \in (0, \mu(\lambda)\lambda^{-1/2}]$, the following holds:
\begin{equation}\label{beta-regularity0}
\int_{\Omega_\lambda} \int_{S_r(x)} |V(x) - V(y)| \, \dvol_{S_r(x)}(y)\, dx 
\leq \eta(\lambda^{-1})\, r^{n + 2\beta - 1}\, \lambda^{1 + \beta}\, \sigma(\lambda),
\end{equation}
where $S_r(x) := \{ y \in M : d(x, y) = r \}$, $\Omega_\lambda := \{ x \in M : V(x) \leq \lambda \}$, and $\dvol_{S_r(x)}$ denotes the induced Riemannian measure on the geodesic sphere $S_r(x)$.

We set
\be\label{defn-Ob}
\mO_{\beta} := \left\{ V \in L_{\mathrm{loc}}^\infty(M) : \text{$V$  satisfies  \eqref{cond-V-2}, \eqref{double} and \eqref{beta-regularity0}} \right\}.
\ee
\end{defn}

In fact, it is also natural to consider the following function space. We denote by $\tmO_{\beta}$ the space of functions that satisfy conditions as those in $\mO_{\beta}'$, but with \eqref{beta-regularity2} replaced by the following: for any $r\in(0,\tau_0)$, 
\be\label{beta-regularity1}
 \int_{B_{\tau_0}(x)} \int_{ S_r(z)\cap B_{\tau_0}(x)}|V(z) - V(y)|\dvol_{S_r(z)}(y) dz \leq \eta(r)\, r^{n+2\beta-1} (\max\{1 ,V(x)\})^{1+\beta},
\ee
holds for any $x\in M$.
%\end{defn}

In \cref{appendix-A}, using the volume comparison and the Vitali covering lemma, we show that
$\tmO_{\beta}\subset \mO_{\beta}.$ We also prove that $\mO_\beta'$ is strictly contained in $ \mO_\beta$ when $M = \R^n$. See $\S$\ref{appendix a1}- $\S$\ref{appendix a4} for more examples of functions in $\mO_\b$.

		\subsection{Parametrix Construction}  
		
		In this subsection, we construct a parametrix \(k_\h^0\) for the heat kernel \(K_\h\) of the semi-classical Schrödinger operator \(\h^2\Delta + V\). Our approach follows the framework developed in \cite{DY2020index}, with additional insights from \cite{fan2011schr}. However, in this paper, we focus only on the leading-order term in the asymptotic expansion, and our method differs slightly from that in \cite{DY2020index}. 
		
		 Recall that ${\tau_0}>0$ is a injectivity radius lower bound of $M.$ Then for $d(x,y)<{\tau_0}$, set
		\begin{equation} \label{ehk}
			\e_0(t, x, y)=\e_{0,\h}(t,x,y)=\frac{1}{(4\pi \h^2t)^{\n}}\exp\left(-\frac{d^2(x,y)}{4\h^2t}\right),
		\end{equation}
		and
		\begin{equation} \label{cthk}
			\e_{1}(t,x,y)=\exp\big(-tV(x)\big).
		\end{equation} 
		
		A direct computation gives us the following formulas.% (the first two are well known). 
		
		\begin{lem}\label{e00}
			For \( y \in B_{\tau_0}(x) \), in normal coordinates centered at \( x \), we have:
\[
\begin{aligned}
\nabla \e_0 &= -\frac{\e_0}{2\h^2 t} \, r \nabla r, \quad &&(\partial_t + \h^2 \Delta) \e_0 = \frac{\e_0}{4tG} \, \nabla_{r \nabla r} G, \\
\nabla \e_1 &= 0, \quad &&\Delta \e_1 = 0.
\end{aligned}
\]

		%	and
		%	\[
		%	\nabla_{r \nabla r} H(x, y) + H(x, y) - V(y) = 0.
		%	\]
			Here, $G(x,y) := \det(g_{ij})$ is associated with the normal coordinates near $x$, and
			operators act on the $y$-component. 
		\end{lem}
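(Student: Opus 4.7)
Since $\e_1(t,x,y)=e^{-tV(x)}$ has no $y$-dependence and both $\nabla$ and $\Delta$ act on the $y$-variable, the identities $\nabla \e_1=0$ and $\Delta \e_1=0$ are immediate. For $\e_0$, setting $r=d(x,\cdot)$ and applying the chain rule to $\e_0=(4\pi\h^2 t)^{-n/2}\exp\bigl(-r^2/(4\h^2 t)\bigr)$ directly yields $\nabla \e_0=-\frac{r}{2\h^2 t}\e_0\,\nabla r$. So neither the $\e_1$ identities nor the gradient of $\e_0$ requires genuine work.

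My plan for the main heat-operator identity is a direct computation in geodesic normal coordinates $y=(y^1,\dots,y^n)$ centered at $x$, where $r=|y|$. The time derivative is elementary:
\[
\partial_t \e_0=\Bigl(-\frac{n}{2t}+\frac{r^2}{4\h^2 t^2}\Bigr)\e_0.
\]
For the spatial part I would expand $\Delta\e_0=-G^{-1/2}\partial_i\bigl(G^{1/2}g^{ij}\partial_j\e_0\bigr)$, substitute $\partial_j\e_0=-\frac{y^j}{2\h^2 t}\e_0$, and invoke Gauss' lemma in its sharp normal-coordinate form $g^{ij}(y)y^j=y^i$ to reduce $\sqrt{G}\,g^{ij}\partial_j\e_0$ to $-\frac{y^i\sqrt{G}}{2\h^2 t}\e_0$. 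The product rule then produces exactly three contributions, namely an $n\sqrt{G}$ term from $\partial_i y^i$, a $y^i\partial_i\sqrt{G}$ term, and an $r^2\sqrt{G}/(2\h^2 t)$ term from differentiating $\e_0$. Assembling with $\partial_t\e_0$, the $r^2/(\h^2 t^2)$ terms and the $n/(2t)$ terms cancel, leaving
\[
(\partial_t+\h^2\Delta)\e_0=\frac{y^i\partial_i\sqrt{G}}{2t\sqrt{G}}\e_0=\frac{r\,\partial_r G}{4tG}\e_0.
\]
Identifying the vector field $r\nabla r$ with $y^i\partial_i=r\partial_r$ on scalars (since $\nabla r$ is the unit radial vector in normal coordinates) then delivers the stated formula $\frac{\e_0}{4tG}\nabla_{r\nabla r}G$.

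Since the lemma is essentially a verification, there is no genuine obstacle. The only points that deserve care are the sign convention ($\Delta$ denotes here the \emph{positive} Laplace–Beltrami operator) and the invocation of Gauss' lemma in the algebraic form $g^{ij}(y)y^j=y^i$, which is precisely what cleans up the divergence, forces the singular-in-$t$ contributions to cancel, and exposes the purely radial character of the right-hand side.
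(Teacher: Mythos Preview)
Your computation is correct and is exactly the ``direct computation'' the paper invokes without writing out; the paper omits the proof entirely, so your approach---normal coordinates, Gauss' lemma in the form $g^{ij}y^j=y^i$, and tracking the three terms in the divergence---is precisely what is intended. Nothing further is needed.
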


		Let \be\label{k0} k_\h^0:=\e_{0,\h}\e_1G^{-1/4} \mbox{ and } R_\h:=\left(\partial_t+\h^2\Delta+V\right)k_\h^0,\ee
			where 
		operators act on the $y$-component. 
		 Then, 
		by a straightforward computation and using \Cref{e00}, we have
		\begin{prop} Near the diagonal $\{(x,x):x\in M\}\subset M\times M$,
			\begin{flalign}\label{rh}
				R_\h=\e_0\e_{1}\Big(
				\h^2\Delta G^{-1/4}%+\nabla_{r\nabla r}h_T\t_k 
				+\big(V(y)-V(x)\big)\Big).
			\end{flalign}
		
		\end{prop}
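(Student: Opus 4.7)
My plan is to compute $R_\h = (\partial_t + \h^2\Delta + V)(\e_0 \e_1 G^{-1/4})$ by decomposing it with the Leibniz rule, treating the $\e_1 = e^{-tV(x)}$ factor separately (since it depends only on $t$ and $x$, not on the variable $y$ on which the operators act), and then invoking \Cref{e00} for the $y$-derivatives of $\e_0$ and $G^{-1/4}$.

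The key intermediate step is to verify the classical parametrix identity
\[
(\partial_t + \h^2\Delta)(\e_0 G^{-1/4}) = \h^2\,(\Delta G^{-1/4})\,\e_0.
\]
Expanding via the product rule yields
\[
(\partial_t + \h^2\Delta)(\e_0 G^{-1/4}) = \bigl((\partial_t + \h^2\Delta)\e_0\bigr) G^{-1/4} + \h^2 \e_0\, \Delta G^{-1/4} - 2\h^2 \langle\nabla \e_0,\nabla G^{-1/4}\rangle,
\]
where I would use \Cref{e00} to rewrite the first term as $\frac{\e_0}{4tG}\nabla_{r\nabla r}G \cdot G^{-1/4}$, and to rewrite the cross term using $\nabla \e_0 = -\tfrac{\e_0}{2\h^2 t}r\nabla r$ as $\frac{\e_0}{t}\nabla_{r\nabla r}G^{-1/4} = -\frac{\e_0}{4tG^{5/4}}\nabla_{r\nabla r}G$. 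These two contributions cancel, leaving precisely $\h^2\e_0\,\Delta G^{-1/4}$. This is the standard cancellation at the heart of Minakshisundaram--Pleijel-type parametrix constructions, and it is the one step that genuinely requires a careful bookkeeping of the radial derivatives.

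To finish, I would bring back the $\e_1$ factor. Since $\Delta_y \e_1 = 0$, the spatial part is simply multiplication by $\e_1$, giving $\h^2\Delta(\e_0 \e_1 G^{-1/4}) = \h^2 \e_1 \Delta(\e_0 G^{-1/4})$. For the time derivative, $\partial_t \e_1 = -V(x)\e_1$, so that
\[
\partial_t(\e_0 \e_1 G^{-1/4}) = \e_1\,\partial_t(\e_0 G^{-1/4}) - V(x) \e_0 \e_1 G^{-1/4}.
\]
Combining these with $+V(y)$ and using the cancellation above gives
\[
R_\h = \e_0\e_1\Bigl(\h^2\Delta G^{-1/4} + \bigl(V(y)-V(x)\bigr)G^{-1/4}\Bigr),
\]
which matches the stated formula (noting that $G^{-1/4} \to 1$ as $y \to x$, so the potential term is effectively as written near the diagonal).

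The only nontrivial obstacle is the radial cancellation in the middle step; once that is in hand, the rest is bookkeeping, the $\e_1$ factor being essentially inert under $\Delta_y$ and contributing only the $-V(x)\e_1$ through $\partial_t$, which conveniently meets the $+V(y)$ coming from the operator.
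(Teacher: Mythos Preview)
Your proof is correct and follows essentially the same approach as the paper: both expand via the product rule, invoke \Cref{e00}, and use that $G^{-1/4}$ satisfies the radial ODE $\nabla_{r\nabla r}u + \tfrac{1}{4G}(\nabla_{r\nabla r}G)u = 0$ to effect the cancellation. The only organizational difference is that the paper carries a general factor $u$ through the computation and specializes to $u=G^{-1/4}$ at the end, whereas you first establish $(\partial_t+\h^2\Delta)(\e_0 G^{-1/4})=\h^2\e_0\,\Delta G^{-1/4}$ and then adjoin $\e_1$; the content is identical. Your observation about the extra $G^{-1/4}$ on the potential term is also correct---the paper's own derivation produces $(V(y)-V(x))\e_0\e_1 G^{-1/4}$, so the displayed formula \eqref{rh} is missing that factor, though this is immaterial for the subsequent estimate \eqref{tilderhk} since $G^{-1/4}$ is bounded on the support of $\phi$.
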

		\begin{proof}
			
			For any $u\in C^\infty(M\times M)$ and supported near the diagonal, we compute:
			\begin{flalign*} &\ \ \ \ \left(\partial_t+\h^2\Delta +V\right)(\e_0\e_{1}u) \\
				& = \left[\left(\partial_t+\h^2\Delta\right)\e_0\right]\e_1 u + 
				\left[\left(\partial_t +V\right)\e_1\right] \e_0 u  + %\e_0 \e_1 \left(\pat+\h^2\Delta\right)u 
\e_0 \e_1 \h^2\Delta u  
            -2\h^2 \langle \nabla \e_0, \nabla u \rangle \e_1 . \end{flalign*}
			Using Lemma \ref{e00}, we have 
			\begin{align*}
				\left[\left(\partial_t+\h^2\Delta\right)\e_0\right]\e_1 u =t^{-1} \e_0 \e_1  \frac{1}{4G}(\nabla_{r\nabla r}G)u ;\\
				\left[\left(\partial_t +V\right)\e_1\right] \e_0 u =\big(V(y)-V(x)\big) \e_0 \e_1 u ; \\
				%\e_0 \e_1 \left(\pat+\h^2\Delta\right)u =  \h^2\Delta u; \\
				-2\h^2 \langle \nabla \e_0, \nabla u \rangle \e_1 = t^{-1}\e_0 \e_1  \nabla_{r\nabla r}u.
			\end{align*}
			Note that $G^{-1/4}$ solves \[\nabla_{r\nabla r}u + \left(\frac{1}{4G}\nabla_{r\nabla r}G\right)u =0,\]
			our result follows.
		\end{proof}
		
		\def\nowwe{1}
		\if\nowwe0
		Now we can follow the standard procedure  to find suitable $\t_j=\t_{\h,j}$ , $j=0,1,...,k,$ such that 
		\begin{eqnarray} 
			\Big(\partial_t+\h^2\Delta+V\Big)(\e_0\e_1 u)  &  = & t^kR_{k,\h}(t,x,y), \label{parametrix} \\
			\t_{\h,0}(x,x) &  = & \operatorname{Id}  \label{initial}
		\end{eqnarray}
		where $R_{k,\h}(t,x,y)$ is $C^0$ in $t\in[0,\infty)$.
		This amounts  to solving ODEs inductively. In other words, we want to solve
		\begin{align*} 
			&\ \ \ \nabla_{r\nabla r}\t_{j+1} + \left(j+1+\frac{1}{4G}\nabla_{r\nabla r}G\right)\t_{j+1} + \h^2\Delta \t_j \\&- \h^2 \Delta H\t_{j-1}+2\h^2\nabla_{\nabla H} \t_{j-1} - \h^2|\nabla H|^2\t_{j-2} =0
		\end{align*}
		for $j=-1, 0, \cdots, k-1$. It can be rewritten as
		\begin{equation} \label{recursive}
			\nabla_{r\nabla r}(r^{j+1} G^{1/4}\t_{j+1})= \h^2 r^{j+1} G^{1/4} \left[ -\Delta \t_j
			+ \Delta H\t_{j-1}-2 \nabla_{\nabla H} \t_{j-1} + |\nabla H|^2\t_{j-2} \right].
		\end{equation}
		
		For $j=-1$, we have
		$\dr(G^{1/4 }\t_{\h,0})=0$. Together with the initial condition  $\t_{h,0}(x,x)=\operatorname{Id}$, one obtains
		$\t_{h,0}=G^{-1/4}\operatorname{Id}$.
		
		For $j=0$, we have
		$\dr(rG^{1/4 }\t_{\h,1})=-\h^2G^{1/4}\Delta \t_{\h,0}$; hence we can solve $\t_{\h, 1}$ explicitly in terms of $\t_{\h,0}$.
		
		Similarly, for $1\leq j\leq k-1$, $\t_{\h,j+1}$ can be solved recursively from the equation 
		\begin{flalign*}
			\dr(r^{j+1}G^{1/4}\t_{\h,j+1})&=-\h^2r^{j}G^{1/4}(\Delta\t_{\h,j}-\Delta H\t_{\h,j-1}\\
			&+2\nabla_{\nabla H} \t_{\h,j-1}-|\nabla H|^2\t_{\h,j-2}).
		\end{flalign*}
		With these choices for  $\t_{\h,j}$'s, we obtain (\ref{parametrix}), where
		\begin{flalign} \label{asyrem}
			R_{k, \h}&=\h^2\e_0\e_1\Big\{[\Delta \t_{\h,k}-\Delta H\t_{\h,k-1}+2\nabla_{\nabla H} \t_{\h,k-1}-|\nabla H|^2\t_{\h,k-2}]  \nonumber \\ 
			&+[-\Delta H\t_{\h,k}+2\nabla_{\nabla H} \t_{\h,k}-|\nabla H|^2\t_{\h,k-1}]t+[-|\nabla H|^2\t_{\h,k}]t^{2}\Big\}
		\end{flalign}
		
		For simplicity, we will abbreviate $\t_{\h,i}$ as $\t_{j}$ when we don't have to emphasis the dependence of $\h$.

		The following proposition follows from the above construction via an argument of induction, using the $a$-regular tame condition.
		\begin{prop}\label{asyexp}
			Each $\t_{\h,j}$ can be written as a polynomial of $\h$:
			\[\t_{\h,j}(x,y)=\sum_{l=[\frac{j+2}{3}]}^{j}\h^{2l}\t_{l,j}(x,y),\]
			where $\t_{l,j}$ is independent of $\h$, $[a]$ denotes the integral part of a real number $a$. 
			Moreover, for $a\in[0,\frac{1}{2})$
			\be\label{thetahj1} |\t_{\h,j}(x,y)|\leq C_j(\bar{V}_\gamma)^{\k j}\h^{2[\frac{j+2}{3}]}, \ee
			where $\k=\frac{2(1+a)}{3}$, $\bar{V}_{\gamma}=\sup_{p\in\gamma} |V(p)|$, $\gamma$ is the shortest geodesic connecting $x$ and $y.$
			
			When restricted on the diagonal of $M\times M$, $\t_{\h,j}(y,y)$ can be written as an algebraic combination the curvature of the metric $g$, the potential function $V$, as well as their derivatives, at $y$; in addition, $\t_{\h,0}(y,y)=\operatorname{Id}.$
		\end{prop}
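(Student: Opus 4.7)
The plan is to prove all three claims simultaneously by strong induction on $j$. The base case $j=0$ is immediate: the initial condition gives $\t_{\h,0} = G^{-1/4}\Id$, which is $\h$-independent (so its $\h$-expansion has only the $l=0$ term, in agreement with $[\tfrac{0+2}{3}]=0$), is bounded by a purely geometric constant, and restricts on the diagonal to $\Id$.

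For the inductive step, I would integrate the transport equation \eqref{recursive} along the unique minimizing geodesic $\gamma$ from $x$ to $y$, producing a formula of the schematic form
\[
\t_{\h,j+1}(x,y) \;=\; \h^2\, \frac{G^{-1/4}(x,y)}{r^{j+1}} \int_0^{r} s^{j+1}\, G^{1/4}\bigl[\,\cdots\,\bigr]_{\gamma(s)}\, ds,
\]
where the bracket is a linear combination of $\Delta \t_{\h,j}$, of $V$-weighted first-order derivatives of $\t_{\h,j-1}$, and of $|\nabla H|^2\, \t_{\h,j-2}$. The polynomial structure in $\h$ follows by matching like powers: under the inductive hypothesis the four inputs have lowest $\h$-powers $2[\tfrac{j+2}{3}],\ 2[\tfrac{j+1}{3}],\ 2[\tfrac{j+1}{3}],\ 2[\tfrac{j}{3}]$, so multiplying by the outer $\h^2$ produces lowest power $2 + 2[\tfrac{j}{3}] = 2[\tfrac{(j+1)+2}{3}]$ for $\t_{\h,j+1}$, while the top power rises by $2$ and reaches $2(j+1)$. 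Matching like powers of $\h^2$ on both sides of the recursion then gives well-defined, $\h$-independent coefficients $\t_{l,j+1}$. The diagonal claim is likewise immediate: near $y=x$ the integrand admits a Taylor expansion in $s$ whose coefficients are algebraic in $g$, $V$, and their derivatives at $x$, and the subsequent operations $s^{j+1}$-integration followed by division by $r^{j+1}$ preserve this algebraic character when evaluated at $y=x$.

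For the quantitative bound, I would use $a$-regularity \eqref{secondcond} to control along $\gamma$ the $V$-dependent coefficients of the recursion in terms of $\bar{V}_\gamma$; in particular, once $H$ is related to $V$ in the parametrix, $|\Delta H|$ and $|\nabla H|^2$ are bounded by $\bar{V}_\gamma^{2(1+a)}$ up to a constant depending only on the bounded-geometry data. Substituting into the integral formula and using $r^{-(j+1)}\int_0^r s^{j+1}\, ds \leq 1$, each single recursion step contributes one factor of $\h^2$ and at most a factor $\bar{V}_\gamma^{2(1+a)}$. Since three consecutive recursion steps are needed to raise the minimal $\h^2$-count by one unit (as reflected in the floor function $[\tfrac{j+2}{3}]$), an amortization argument yields the exponent $\kappa = 2(1+a)/3$ in \eqref{thetahj1}. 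The main obstacle I anticipate is exactly this amortization: a single-step estimate would yield only the weaker bound $\bar{V}_\gamma^{2(1+a)j}$, and extracting the $1/3$ improvement requires tracking the three-term structure of the recursion carefully, balancing the $\h^2$-accounting against the $\bar{V}_\gamma$-accounting over each block of three consecutive inductive steps.
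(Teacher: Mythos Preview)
Your inductive approach matches the paper's: the paper offers no detailed argument for this proposition beyond the single sentence ``follows from the above construction via an argument of induction, using the $a$-regular tame condition.'' Your proposal is precisely that induction, and your $\h$-power bookkeeping (the identity $2+2[\tfrac{j}{3}]=2[\tfrac{(j+1)+2}{3}]$) is correct and is the mechanism behind the floor-function range in the $\h$-expansion.

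One genuine concern: for the quantitative bound you invoke only the $a$-regularity condition \eqref{secondcond} to control $|\Delta H|$ and $|\nabla H|^2$ by $\bar V_\gamma^{2(1+a)}$. But \eqref{secondcond} is a H\"older-type bound on \emph{differences} $|V(x)-V(y)|$; it says nothing directly about derivatives of $V$ (or of $H$, once $H$ is related to $V$). Notice the paper's one-line proof invokes the ``$a$-regular \emph{tame} condition''---the word ``tame'' is not defined in this paper but refers to the stronger hypothesis from \cite{DY2020index} that controls $|\nabla^k H|$ in terms of powers of $V$. That is the input you actually need to bound the $H$-dependent coefficients in the recursion; $a$-regularity alone will not close the estimate. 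Once you add that hypothesis, your amortization heuristic (three recursion steps per unit gain in the $\h^2$-count, each step contributing at most $\bar V_\gamma^{2(1+a)}$ from the $|\nabla H|^2\,\t_{j-2}$ term, hence net exponent $\kappa=\tfrac{2(1+a)}{3}$ per index) is the right way to extract \eqref{thetahj1}.
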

		\fi
		\subsection{Remainder estimates on large bounded set}
        In this subsection, we establish the remainder estimate stated in \Cref{prop313}.

	Assume that $V \in \mathcal{O}_\beta$, and that \eqref{beta-regularity0} holds for some $\eta$ and $\mu$.
%We fix $\gamma_1\in(\gamma,\half).$
        
		 For any $T\gg 1$, we set
        \[V_T:=\max\{V,T\}.\] Let $\varphi\in C_c^\infty(\R)$ be a bump function  such that the support of $\varphi$ is contained in $[-1,1],$ $0\leq \varphi \leq 1$, and $\varphi|_{[-\half,\half]}\equiv1.$ 
		Let $\phi_T$ be given by:
		\begin{equation} \label{cutoff}
			\phi_T(x,y)=\varphi\left(\frac{d^2(x,y)V_T(x)}{\mu^2\big(V_T(x)\big)}\right).\end{equation}
		\begin{prop}\label{asym}
			
			Set  \[K_\h^{0,T}(t,x,y)=k_\h^0(t,x,y)\phi_T(x,y)\] 
			then 
			\begin{flalign*}
				\left(\partial_t+\h^2\Delta+V\right)K^{0,T}_{\h}(t,x,y)&=  \phi_T(x,y)R_{\h}(t,x,y)+\h^2\Delta\phi_T(x,y)k^0_{\h}(t,x,y)  \\
				&  -2\h^2 \big(\nabla\phi_T(x,y),\nabla k_{\h}^0(t,x,y)\big).
			\end{flalign*}
		\end{prop}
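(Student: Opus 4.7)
The plan is to apply the Leibniz rule to $(\pat+\h^2\Delta+V)(k_\h^0 \phi)$, treating each of the three summands separately. Two of the three are trivial: since the cutoff $\phi(x,y)$ is time-independent, $\pat(k_\h^0\phi)=\phi\,\pat k_\h^0$; and since $V=V(y)$ acts as multiplication in the $y$-variable (the variable on which the operator acts, as specified just before the lemma on $\e_0$), $V(k_\h^0\phi)=\phi\,V k_\h^0$.

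The only nontrivial piece is the second-order term. Applying the product rule for the positive Laplace--Beltrami operator in the $y$-variable yields
\[
\Delta(k_\h^0\phi)=(\Delta k_\h^0)\phi + k_\h^0\,\Delta\phi -2\bigl(\nabla k_\h^0,\nabla\phi\bigr),
\]
where the sign of the cross term uses the paper's convention $\Delta=-\mathrm{div}\,\nabla$. Summing the three contributions and invoking the defining identity $R_\h=(\pat+\h^2\Delta+V)k_\h^0$ from \eqref{k0}, one collects
\[
(\pat+\h^2\Delta+V)K_\h^0=\phi\,R_\h + \h^2\,k_\h^0\,\Delta\phi - 2\h^2\bigl(\nabla\phi,\nabla k_\h^0\bigr),
\]
which is exactly the claimed formula.

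A small point worth noting, though not really an obstacle, is that $k_\h^0$ is a priori only defined on the set where the normal coordinates based at $x$ are valid, i.e.\ on $\{(x,y):d(x,y)<\tau\}$. However, because $\phi$ is supported in $\{d(x,y)\leq\tau\}$ by construction \eqref{cutoff}, the product $K_\h^0=k_\h^0\phi$ extends smoothly by zero to all of $M\times M$ (and likewise for $\phi R_\h$, $k_\h^0\Delta\phi$, and $(\nabla\phi,\nabla k_\h^0)$), so the identity makes global sense on $M\times M$. Once the sign convention for $\Delta$ is fixed, the entire proposition is bookkeeping for the Leibniz rule, and there is no genuine difficulty to overcome.
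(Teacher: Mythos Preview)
Your proof is correct and is exactly the standard Leibniz-rule computation one expects; the paper in fact states this proposition without proof, treating it as routine, so your argument fills in precisely the omitted details.
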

		Let $\tilde{R}^T_{\h}$ be given by:  
		\[
		\tilde{R}^T_{\h} = \phi_T(x,y)R_{\h}(t,x,y) + \h^2\Delta\phi_T(x,y)k^0_{\h}(t,x,y) - 2\h^2 \big(\nabla\phi_T(x,y), \nabla k_{\h}^0(t,x,y)\big),
		\]
		where derivatives are taken on the $y$-components.

		Note that the support of $\nabla\phi_T(x,y)$ and $\Delta\phi_T(x,y)$ lies outside the region $$\{y : d^2(x,y)V_T(x) < \mu^2\big(V_T(x)\big)
        \}
        .$$ Set $\chi_T(x, y)=1 $ if $d^2(x, y)V_T(x) < \mu^2\big(V_T(x)\big)$ and zero otherwise.  By \eqref{rh},
        \be\label{tilderhk1}  
		|\tilde{R}^T_{\h}| \leq C \hat\e_0 \e_1 \Big(\h^2V_T(x)\mu^{-2}\big(V_T(x)\big)+|V(y)-V(x)|\Big) \chi_T,
		\ee 
   where $\hat{\e}_0(t,x,y):=\e_0(2t,x,y)$.
   
	Let $K_{\h}$ denote the heat kernel of $\h^2\Delta+V$.
Let $K_M$ be the heat kernel of $\Delta$ on $M$.  It follows easily from the maximal principle and the standard heat kernel estimate that
	\begin{lem}\label{heates1}
    We have\begin{enumerate}[(1)]
        \item $0\leq K_\h(t,x,y)\leq K_M(t\h^2,x,y).$
        \item There exists positive constants $c_1$ and $c_2$, depending only on the bounded geometry data $({\tau_0},R_0)$, such that for $t\in(0,1]$,
		\be\label{heat estimate} 0\leq K_M(t,x,y)\leq c_1t^{-\n}\exp\left(-\frac{c_2d^2(x,y)}{t}\right).\ee 
    \end{enumerate}
		
	\end{lem}
	
\begin{lem}
Suppose $V$ satisfies the $\beta$-oscillation condition. Then there exists a constant $C$ depending only on $(n, \beta)$ such that for any sufficiently large $\lambda > 0$, any $r \in (0, \mu(\l)\l^{-\half}]$, and any $t \in (0,1)$,
\begin{equation}\label{beta-regularity00}
\int_{\Omega_\lambda} \int_{B_r(x)} e^{-\frac{d^2(x,y)}{4t}} |V(x) - V(y)| \, dy \, dx \leq C \eta(\l^{-1})\, t^{\n+\beta} \lambda^{1+\beta} \sigma(\lambda).
\end{equation}
\end{lem}

\begin{proof}
By \eqref{beta-regularity0} and Fubini's theorem, we get for $r \in (0, \mu(\l)\l^{-\half}]$:
\[
\begin{aligned}
&\quad\int_{\Omega_\lambda} \int_{B_r(x)}  e^{-\frac{d^2(x,y)}{4t}} |V(x) - V(y)| \, dy \, dx 
\\&= \int_0^r e^{-\frac{\rho^2}{4t}} \int_{\Omega_\lambda} \int_{S_\rho(x)} |V(x) - V(y)| \, \mathrm{dvol}_{S_\rho(x)}(y) \, dx \, d\rho \\
& \leq \eta(\l^{-1})\lambda^{1+\beta} \sigma(\lambda) \int_0^r e^{-\frac{\rho^2}{4t}} \rho^{n + 2\beta - 1} \, d\rho 
\leq C \eta(\l^{-1})\, t^{\n+\beta} \lambda^{1+\beta} \sigma(\lambda),
\end{aligned}
\]
where we used the standard estimate for the Gaussian-weighted integral of a power function.
\end{proof}

		\begin{prop}\label{prop313}Let $V\in O_\b$.
Then for any \(L > 1\), the following asymptotics hold:
\begin{equation}\label{eq56}
\int_{\{x \in M : V(x) \leq \frac{L}{t}\}} K_{\h=1}(t,x,x)\,dx \sim \frac{1}{(4\pi t)^\n} \int_{\{x \in M : V(x) \leq \frac{L}{t}\}} e^{-tV(x)}\,dx, \quad t \to 0,
\end{equation}
and if $\b>0$, for fixed $t>0$,
\begin{equation}\label{eq57}
\int_{\{x \in M : V(x) \leq \frac{L}{t}\}} K_{\h}(t,x,x)\,dx \sim \frac{1}{(4\pi t\h^2)^\n} \int_{\{x \in M : V(x) \leq \frac{L}{t}\}} e^{-tV(x)}\,dx, \quad \h \to 0.
\end{equation}
\end{prop}

\begin{proof} 
\def\te{{\tilde{\e}}}
Set $\tilde{c}_2 = \min\{c_2, 1/16\}$ and 
\begin{equation}\label{tilde-e}
\te(t,x,y) = (t \h^2)^{-\n} \exp\left( -\frac{\tilde{c}_2 d^2(x,y)}{t \h^2} \right),
\end{equation}
where $c_2$ is the constant in \Cref{heates1}.

For fixed small $t > 0$, we consider the parametrix $K_\h^{0,T}$ with 
\[
T =Lt^{-1}.
\]
Let
\be\label{gamma l t}
\gamma:=T^{-1}\mu^{2}(T)=(L^{-1}t)\mu^{2}(Lt^{-1}).
\ee
By Duhamel's principle, \eqref{tilderhk1}, and \Cref{heates1}, if $t$ is small enough,
\begin{equation}\label{Duhamel}
\begin{aligned}
&\quad \big|K_\h - K^{0,T}_\h\big|(t,x,x) = \left| \int_0^t \int_{B_{\gamma}(x)} K_\h(t-s, x, z) \Rt^T(s, x, z) \,dz\,ds \right| \\
&\leq \int_0^t \int_{B_{\gamma}(x)} \te(t-s, x, z) \te(s, x, z) 
\Big( \h^2 V_T(x)\mu^{-2}\big(V_T(x)\big) + |V(x) - V(z)| \Big) e^{-{s V(x)}} \,dz\,ds.
\end{aligned}
\end{equation}
Thus, by \eqref{gamma l t} and \eqref{Duhamel},
\begin{equation}\label{remainder-est1}
\begin{aligned}
&\quad \int_{\Omega_{Lt^{-1}}} \big|K_\h - K^{0,T}_\h\big|(t,x,x)\,dx \\
&\leq \int_0^t \int_{\Omega_{Lt^{-1}}} \int_{B_{\gamma}(x)} 
\te(t-s, x, z) \te(s, x, z) 
\left(  \h^2\gamma^{-1}+ |V(x) - V(z)| \right) e^{-s V(x)} \,dz\,dx\,ds \\
&=: I.
\end{aligned}
\end{equation}
Since $(M,g)$ has bounded geometry, it follows from volume comparison that there exists a constant $C_0$ such that for any $t \in (0,1]$,
\begin{equation}\label{Gaussint}
\frac{1}{(4\pi t)^\n} \int_{\{ y : d(x,y) < \tau_0 \}} e^{- \frac{d^2(x,y)}{t}} \,dy \leq C_0.
\end{equation}
Moreover, by a straightforward computation,
\begin{equation}\label{eq53}
\frac{d^2(x,z)}{t-s} + \frac{d^2(x,z)}{s} = \frac{t d^2(x,z)}{(t-s)s}.
\end{equation}
It follows from \eqref{beta-regularity00}, \eqref{tilde-e},  \eqref{Gaussint}, \eqref{eq53}, and the bound $e^{-l} \leq 1$ for $l > 0$ that
\begin{equation}\label{est-I1}
\begin{aligned}
&\quad I \leq C \int_0^t \left[ (t\h^2)^{-\n}   \h^2\gamma^{-1} + \h^{2\beta-n} 
\eta(tL^{-1}) (t-s)^\beta s^\beta t^{-\n - \beta} \left(Lt^{-1} \right)^{1+\beta} \right] \sigma(Lt^{-1}) \,ds \\
&\leq C'(L, \beta)(t\h^2)^{-\n} \Big( \h^2\gamma^{-1} t+ \h^{2\beta} \eta(t) \Big) \sigma(Lt^{-1}) \\
&\leq C''(L, \beta) e^L \Big( {\h^2}{\mu^{-2}(Lt^{-1})} + \h^{2\beta} \eta(t) \Big) 
\frac{1}{(4\pi t \h^2)^\n} \int_{\Omega_{Lt^{-1}}} e^{-t V(x)} \,dx.
\end{aligned}
\end{equation}

By \eqref{remainder-est1} and \eqref{est-I1}, fixing $\h = 1$ and letting $t \to 0$, we obtain \eqref{eq56}. The estimate in \eqref{eq57} can be established in a similar way.
\end{proof}
\def\final{1}
\if\final0
    Finally, we have the following remainder estimate:
	\begin{lem}\label{lem39}
	One obtains if $V$ is $a$-regular, there exists $(t,\h)$-independent constant $C>0$, such that 
		\[|K_\h-K_\h^0|(t,x,x)=|K_\h*\Rt|(t,x,x)\leq C(t\h^2)^{-\n}\Big[t\h^2+\h^{2a}v\big(V(x)\big)\cdot\big(1-e^{-7tV(x)/8}\big)\Big]. \]

	\end{lem}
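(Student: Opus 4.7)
\medskip

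\noindent\textbf{Plan of proof.} The plan is to exploit the Duhamel formula from Lemma~\ref{duhamel}, which already gives $K_\h - K_\h^0 = -K_\h * \Rt$, and then combine three pointwise ingredients: the maximum-principle bound $K_\h(t,x,y)\le K_M(t\h^2,x,y)$ from Lemma~\ref{heates1}, the Gaussian heat-kernel estimate for $K_M$ from Lemma~\ref{heates2}, and the bound on $\Rt$ from Lemma~\ref{newlem}. Writing out the convolution at the diagonal gives
\[
(K_\h*\Rt)(t,x,x) \;=\; \int_0^t\!\!\int_M K_\h(t-s,x,z)\,\Rt(s,x,z)\,dz\,ds,
\]
so everything reduces to a careful two-Gaussian estimate.

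The central calculation will be the following. Inserting the estimates gives, on the support of $\chi$,
\[
\bigl|K_\h(t-s,x,z)\,\Rt(s,x,z)\bigr|
\;\le\; C\,((t-s)\h^2)^{-n/2}(s\h^2)^{-n/2}
\bigl(\h^2+\h^{2a}v(V(x))V(x)\bigr)\,e^{-7sV(x)/8}\,e^{-\alpha d^2(x,z)/\h^2},
\]
where $\alpha = c_2/(t-s) + 7/(32s)$. Using bounded geometry (volume comparison) to carry out the $z$-integration against a Gaussian of width $\h^2/\alpha$ produces a factor $C(\h^2/\alpha)^{n/2}$. The elementary inequality $\alpha \ge c'\,t/(s(t-s))$ then yields the clean identity
\[
((t-s)\h^2)^{-n/2}(s\h^2)^{-n/2}\,\h^n\!\left(\tfrac{s(t-s)}{t}\right)^{\!n/2} \;=\; (t\h^2)^{-n/2},
\]
so after the spatial integration we are left with
\[
\int_M K_\h(t-s,x,z)\Rt(s,x,z)\,dz
\;\le\; C'(t\h^2)^{-n/2}\bigl(\h^2+\h^{2a}v(V(x))V(x)\bigr)\,e^{-7sV(x)/8}.
\]

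Finally I would integrate in $s$ from $0$ to $t$. The first term contributes $\h^2\cdot t = t\h^2$, while the second term, using $\int_0^t e^{-7sV(x)/8}\,ds = \tfrac{8}{7V(x)}\bigl(1-e^{-7tV(x)/8}\bigr)$, cancels the factor $V(x)$ exactly and yields $\tfrac{8}{7}\h^{2a}v(V(x))\bigl(1-e^{-7tV(x)/8}\bigr)$. Adding these produces precisely the bound stated in Lemma~\ref{lem39}.

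The main subtlety I expect is the two-Gaussian combination step: one needs that $\alpha^{-1}\le C\,s(t-s)/t$ uniformly in $s\in(0,t)$, so that after integrating the Gaussian one recovers a factor $(t\h^2)^{-n/2}$ rather than something singular at $s=0$ or $s=t$. The second mild point is that the $z$-integral must be controlled globally, not just locally, but since the Gaussian weight is $e^{-\alpha d^2(x,z)/\h^2}$ and $(M,g)$ has bounded geometry, the usual volume-comparison bound $\int_M e^{-\alpha d^2(x,z)/\h^2}dz\le C(\h^2/\alpha)^{n/2}$ suffices. The crucial algebraic cancellation of $V(x)$ by the $s$-integral is what turns the seemingly rough $a$-regularity bound into the desired $v(V(x))$-controlled remainder.
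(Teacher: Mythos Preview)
Your proposal is correct and follows essentially the same route as the paper's proof: Duhamel plus the three pointwise bounds, then the two-Gaussian merge and the $s$-integration that cancels the $V(x)$ factor. The only cosmetic difference is that the paper replaces both Gaussian constants by $\tilde c_2=\min\{c_2,7/32\}$ up front and then uses the exact identity $\tfrac{d^2}{t-s}+\tfrac{d^2}{s}=\tfrac{t\,d^2}{(t-s)s}$, whereas you keep the two constants separate and invoke the inequality $\alpha\ge c'\,t/(s(t-s))$; these are equivalent. One small correction: the $z$-integral is in fact already local, since the factor $\chi$ in the bound on $\Rt$ restricts to $d(x,z)<\tau$, so no global Gaussian control is needed.
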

	\begin{proof}
		\def\te{{\tilde{\e}}}
	 Set $\tilde{c}_2=\min\{c_2,7/32\}$ and \be\label{tilde-e}\te(t,x,y)= (t\h^2)^{-\n}\exp(-\frac{\tilde{c}_2d^2(x,y)}{t\h^2}),\ee where $c_2$ is the constant in \Cref{heates2}.
	 
	 Since \((M,g)\) has bounded geometry, it follows from volume comparison that there exists \(C_0\), such that for any \(t\in(0,1]\), 
	 \be\label{Gaussint}\frac{1}{(4\pi t)^\n}\int_{\{y:d(x,y)<{\tau_0}\}}e^{-\frac{\tilde{c}_2d^2(x,y)}{t}}dy\leq C_0.\ee  
	  Next, by straightforward computation,  
	 \be\label{eq53}  
	 \frac{d^2(x,z)}{t-s}+\frac{d^2(x,z)}{s}= \frac{td^2(x,z)}{(t-s)s}.  
	 \ee  
		Then
		\be\ba
	&\ \ \ \ 	|K_\h*\Rt|(t,x,x)\\
	&\leq C\int_0^t\int_{\{z:d(x,z)<{\tau_0}\}}\te(t-s,x,z)\te(s,x,z)\Big(\h^2+\h^{2a}v\big(V(x)\big)\cdot V(x)\Big)e^{-7sV(x)/8} dzds\\
    & \leq C\int_0^t\int_{\{z:d(x,z)<{\tau_0}\}}\te(t-s,x,z)\te(s,x,z)\Big(\h^2+\h^{2a}v\big(V(x)\big)\cdot V(x)e^{-7sV(x)/8}\Big) dzds\\
     & \leq C'(t\h^2)^{-\n}\int_0^t\Big(\h^2+\h^{2a}v\big(V(x)\big)\cdot V(x)e^{-7sV(x)/8}\Big) ds\\
		&\leq C''(t\h^2)^{-\n}\Big(t\h^2+\h^{2a}v\big(V(x)\big)\cdot\big(1-e^{-7tV(x)/8}\big)\Big),
		\ea\ee
		where the first inequality follows from \Cref{newlem}, Lemma \ref{heates1} and Lemma \ref{heates2}, and the second inequality follows from $e^{-7sV(x)/8}<1$ and the third inequality follows from \eqref{Gaussint} and \eqref{eq53}.

	\end{proof}
		
	\fi

	\def\secd{1}
	\if\secd0
		\section{Dealing with the Case of Lower Regularity}\label{low}
		\subsection{Dealing with}
		In this section, we relax the assumption that $ V $ is $ a $-regular. Instead, we consider $ V $ satisfying $ 0 \leq V \in C^1(M) $ and $ \lim_{x \to \infty} V(x) = +\infty $. Furthermore, there exist constants $ A_V > 0 $ and $ a \in [0, \frac{1}{2}) $ such that  
		\be\label{secondd11}  
		|\nabla V| \leq A_V(1 + V^{1+a}).  
		\ee  
		
		We will prove in \cref{provethm52} the following result:  
		
		\begin{thm}\label{approach}  
			For any $ \delta \in (0, \frac{1}{2}) $, there exists $ V_\delta \in C^\infty(M) $ such that:  
			\begin{enumerate}[(1)]  
				\item\label{i1} $ V_\delta $ is $ a $-regular for some $a\in(0,1)$.
				\item\label{i2} $ |V - V_\delta| \leq \delta(1 + V) $.  
				\item\label{i3} If $ V $ satisfies the doubling condition, then $ V_\delta $ also satisfies the doubling condition.  
			\end{enumerate}  
		\end{thm}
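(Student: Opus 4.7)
The plan is to construct $V_\delta$ as a variable-scale mollification of $V$ whose smoothing radius shrinks where $V$ is large. Since $(M,g)$ has bounded geometry, normal coordinates are available uniformly. Fix a smooth nonnegative bump $\phi : \R^n \to \R$ supported in $B_1(0)$ with $\int\phi=1$, let $\delta' = \delta/(2A_V)$, and set $r(x) := \delta'(1+V(x))^{-a}$, a Lipschitz function with $r(x) \leq \tau/2$. Define
\[
V_\delta(x) := \int_{T_xM} V(\exp_x z)\,\phi\bigl(z/r(x)\bigr)\, r(x)^{-n}\, dz,
\]
which is smooth in $x$.

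I would first dispatch (2) and (3), which I expect to be routine. For (2), the gradient bound \eqref{secondd11} integrated along radial geodesics in $B(x,r(x))$ gives $|V(\exp_x z) - V(x)| \leq A_V(1+V)^{1+a}\, r(x) = A_V\delta'(1+V)$ for $|z|\leq r(x)$; averaging against $\phi$ and using $\int\phi=1$ yields $|V_\delta - V| \leq \delta(1+V)$. For (3), (2) with $\delta<1/2$ furnishes the inclusions
\[
\{V \leq (\lambda-\delta)/(1+\delta)\} \subset \{V_\delta \leq \lambda\} \subset \{V \leq (\lambda+\delta)/(1-\delta)\},
\]
so that for $\lambda$ large both outer arguments lie in $[\lambda/2, 3\lambda]$; chaining the doubling of $\sigma_V$ a bounded number of times delivers the doubling of $\sigma_{V_\delta}$.

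The crux is property (1). My plan is to show that $V_\delta$ is $\tfrac{1}{2}$-regular, exploiting that $a < \tfrac{1}{2}$ provides slack. Differentiating $V_\delta$ under the integral---the variable-scale contribution being tame because $|\nabla r(x)| \lesssim aA_V\delta'$ by \eqref{secondd11}---produces the pointwise gradient bound $|\nabla V_\delta|(x) \leq C(1+V(x))^{1+a}$. For $d(x,y) < \tau$, integrating along the minimizing geodesic $\gamma$ from $x$ to $y$ yields
\[
|V_\delta(y) - V_\delta(x)| \leq C\, d(x,y)\, (1+V(x))^{1+a},
\]
provided $V$ stays comparable to $V(x)$ on $\gamma$. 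Rewriting the right-hand side as $d(x,y)^{2\cdot(1/2)} (1+V(x))^{1+1/2} \cdot C(1+V(x))^{a-1/2}$ and taking $v(t) := C(1+t)^{a-1/2}$---continuous, decreasing, and tending to $0$ since $a<\tfrac{1}{2}$---gives the $\tfrac{1}{2}$-regular decomposition required by Definition \ref{Def-a-reg} (after absorbing the bounded discrepancy between $V$ and $V_\delta$ into $C$).

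The hardest step will be ensuring this geodesic comparability of $V$. The ODE $|V'| \leq A_V(1+V^{1+a})$ in principle permits finite-time blow-up for large initial data, so a crude Gronwall estimate does not close the argument. I anticipate the resolution will combine the a priori global finiteness of $V$ on $M$ with the bounded geometry: for $V(x)$ moderate, Gronwall suffices on geodesics of length $\tau$, whereas for $V(x)$ very large one subdivides $\gamma$ into $O(d(x,y)/r(x))$ pieces of length $\lesssim r(x) = \delta'(1+V)^{-a}$ and iterates the short-scale estimate---precisely the scale at which the mollification itself averages $V$, so the comparability is built into the construction.
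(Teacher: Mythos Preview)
Your construction---mollify $V$ at scale $r(x)\sim(1+V(x))^{-a}$---is the same idea the paper uses. The paper implements it discretely: it decomposes $M$ into dyadic shells $\Omega_k=\{2^{k-1}<V<2^{k+1}\}$, mollifies on each $\Omega_k$ at the fixed scale $\varepsilon\,2^{-ka}$, and glues with a partition of unity $\{\eta_k\}$ satisfying $|\nabla\eta_k|\lesssim 2^{ka}\sim(1+V)^a$ (the geometric input being a lower bound $d(Z_{k-1},Z_k)\gtrsim 2^{-ka}$ on the separation of consecutive level sets, obtained directly from $|\nabla V|\le A_V(1+V^{1+a})$). Your continuous variable-scale version trades the partition of unity for differentiating through $r(x)$; the two are standard variants of one construction, and your arguments for (2) and (3) are correct and essentially identical to the paper's.

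The gap is in (1), and the fix you sketch does not close it. On each subinterval of length $r(x)=\delta'(1+V(x))^{-a}$ the potential can grow by a multiplicative factor $1+A_V\delta'$; after $d(x,y)/r(x)\approx\tau(1+V(x))^{a}/\delta'$ steps this compounds to roughly $\exp\bigl(A_V\tau(1+V(x))^{a}\bigr)$, which is unbounded as $V(x)\to\infty$. More fundamentally, the bound $|\nabla V|\le C(1+V)^{1+a}$ gives no control on $V(y)$ at distance $\tau$ once $V(x)\gtrsim(aC\tau)^{-1/a}$: one can build a smooth $V$ on $\mathbb R$ with $V\to\infty$ at both ends, satisfying the gradient bound, yet carrying spikes of height $H_n$ over bases $B_n$ with $H_n/B_n^{1+a'}\to\infty$ for every $a'\le\tfrac12$. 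For such $V$ the Definition~\ref{Def-a-reg} inequality fails at the base--peak pairs for any $v\to0$, and since $|V_\delta-V|\le\delta(1+V)$ it fails for $V_\delta$ too. So the first-derivative estimate you derive cannot by itself deliver (1). The paper records the higher-order bounds $|\nabla^\ell V_\delta|\le C_\ell(1+V_\delta)^{1+\ell a}$ and then asserts that (1) follows, but it does not spell out the passage to the Definition~\ref{Def-a-reg} inequality either; your proposal at least isolates where the difficulty lies.
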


		\begin{subsubsection}{Proof of Theorem \ref{mainthm}}
			
			With Theorem \ref{approach} established, we proceed to prove Theorem \ref{mainthm}.  Using \eqref{intxn} and Lemma \ref{lim}, we first state the following lemma:  
			
			\begin{lem}\label{lim1}  
				There exists a sufficiently large constant $\lambda_0 > 0$ such that, for all $\lambda \geq \lambda_0$, the following limit holds uniformly as $\delta \to 0$:  
				\[
				\lim_{\delta \to 0} \frac{\int_{M} \big((1+\delta)\lambda - V(x)\big)_+^{\n} \, dx}{\int_{M} (\lambda - V(x))_+^\n \, dx} = 1.  
				\]  
			\end{lem}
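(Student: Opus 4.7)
The plan is to reduce Lemma \ref{lim1} directly to Lemma \ref{lim} via the identity \eqref{intlv} of Proposition \ref{Prop-V}, so that the ratio of $\lambda$-thresholded integrals of $V$ becomes a ratio of values of a convolution measure, to which Lemma \ref{lim} applies verbatim.

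First I would rewrite both integrands against the sublevel-set measure $\sigma$. By \eqref{intlv},
\[
\int_M (\mu - V(x))_+^{n/2}\, dx = \int_0^\mu (\mu - r)^{n/2}\, d\sigma(r),
\]
and the calculation \eqref{valpha}, applied with $\nu = \sigma$ and $\alpha = n/2$, identifies the right-hand side with $\Gamma(n/2+1)\,\sigma^{n/2}([0,\mu])$, where $\sigma^{n/2}$ is the convolution measure from \eqref{conv}. Taking $\mu = (1+\delta)\lambda$ in the numerator and $\mu = \lambda$ in the denominator, the ratio to be analyzed becomes
\[
\frac{\int_M ((1+\delta)\lambda - V)_+^{n/2}\, dx}{\int_M (\lambda - V)_+^{n/2}\, dx}
= \frac{\sigma^{n/2}\big([0,(1+\delta)\lambda]\big)}{\sigma^{n/2}([0,\lambda])}.
\]

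Next I would invoke Lemma \ref{lim} with $\nu = \sigma$ and $\alpha = n/2$. Its hypotheses are the doubling condition for $\sigma$, which is exactly the assumption \eqref{double} of Definition \ref{Def-doubling}, and the threshold $s \geq 2$ on which doubling is assumed — this matches the convention $\lambda_0 = 2$ fixed right after Definition \ref{Def-doubling}. Lemma \ref{lim} then provides the two-sided bound
\[
1 \;\leq\; \frac{\sigma^{n/2}([0,(1+\delta)\lambda])}{\sigma^{n/2}([0,\lambda])} \;\leq\; (1+\sqrt{\delta})^{n/2} + c\,\sqrt{\delta}^{\,n/2}(1-\delta)^{-n/2}
\]
for all $\lambda \geq 4$, with $c = c(\sigma, n/2)$ independent of $\lambda$. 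Choosing $\lambda_0 = 4$ and letting $\delta \to 0$, the upper bound tends to $1$ and the lower bound is $1$, yielding the desired limit uniformly in $\lambda \geq \lambda_0$ by the Squeeze Theorem.

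There is essentially no obstacle — the lemma is a direct translation of Lemma \ref{lim} through the identity \eqref{intlv}. The only point requiring care is bookkeeping of the hypothesis on $\lambda$: Lemma \ref{lim} needs $\lambda \geq 4$, so one cannot take $\lambda_0$ smaller than that without extra argument, but any $\lambda_0 \geq 4$ works and is sufficient for how Lemma \ref{lim1} is used in the proof of Theorem \ref{mainthm}.
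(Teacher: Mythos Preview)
Your proposal is correct and follows exactly the route the paper indicates: the paper simply states that Lemma \ref{lim1} follows from the identity converting $\int_M(\lambda-V)_+^{n/2}\,dx$ into $\int_0^\lambda(\lambda-r)^{n/2}\,d\sigma(r)$ together with Lemma \ref{lim}, and you have spelled out precisely this reduction via \eqref{intlv} and \eqref{valpha}.
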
  
			
			Let $A(\delta):=\frac{1-2\delta}{1+2\delta}$ and $B(\delta):=A(\delta)^{-1}$. By Lemma \ref{lim1}, for any $\epsilon > 0$, there exists $\l$-independent $\delta_0 > 0$ such that for any $\delta < \delta_0$,  
			\be\label{lim2}  
			\left| \frac{\int_M \big(A(\delta) \lambda - V(x)\big)_+^\n \, dx}{\int_M \big(B(\delta) \lambda - V(x)\big)_+^\n \, dx} - 1 \right| < \epsilon.  
			\ee  
			
			Let $\mathcal{N}^\delta$ denote the eigenvalue counting function for $\Delta + V_\delta$.  
			Given that $(1+\delta)V + \delta \geq V_\delta \geq (1-\delta)V - \delta$, for any $\l>0$ big enough, we have
			\begin{align}
				\begin{split}\label{neweq1}
					&\ \ \ \ (\l-V(x))_+\leq \big((1+\delta)\l-(1+\delta)V(x)\big)_+\\&\leq \big((1+\delta)\l-V_\delta(x)+\delta\big)_+\leq \big((1+2\delta)\l-V_\delta(x)\big)_+.
			\end{split}\end{align}
			Similarly, we have for large enough $\l>0$,\be\label{neweq2}
			(\l-V(x))_+\geq \big((1-2\delta)\l-V_\delta(x)\big)_+.
			\ee
			Now for $\delta = \delta_0/2$ and ignoring the constant factor $(2\pi)^{-n} \omega_n$, we obtain:  
			\begin{align*}\begin{split}  
					& \ \ \ \ \liminf_{\lambda \to \infty} \frac{\mathcal{N}(\lambda)}{\int_M (\lambda - V(x))_+^\n \, dx}  
					\geq \liminf_{\lambda \to \infty} \frac{\mathcal{N}^\delta \big((1-2\delta)\lambda\big)}{\int_M \big((1+2\delta)\lambda - V_\delta(x)\big)_+^\n \, dx} \mbox{ (By \eqref{neweq1} and \eqref{neweq2})}\\  
					& \geq \liminf_{\lambda \to \infty} \frac{\mathcal{N}^\delta \big((1-2\delta)\lambda\big)}{\int_M \big((1-2\delta)\lambda - V_\delta(x)\big)_+^\n \, dx}  
					\cdot \frac{\int_M \big(A(\delta) \lambda - V(x)\big)_+^\n \, dx}{\int_M \big(B(\delta) \lambda - V(x)\big)_+^\n \, dx} \mbox{ (By \eqref{neweq1} and \eqref{neweq2})}\\  
					& \geq \liminf_{\lambda \to \infty} \frac{\mathcal{N}^\delta \big((1-2\delta)\lambda\big)}{\int_M \big((1-2\delta)\lambda - V_\delta(x)\big)_+^\n \, dx} (1-\epsilon) \quad \text{(By \eqref{lim2})} \\  
					& \geq (1-\epsilon),  
			\end{split}\end{align*}  
			where the final equality uses the fact that Weyl's law has already been established for $a$-regular potentials.  
			
			Similarly, we obtain:  
			\begin{align*}\begin{split}  
					\limsup_{\lambda \to \infty} \frac{\mathcal{N}(\lambda)}{\int_M \big(\lambda - V(x)\big)_+^\n \, dx} \leq (1+\epsilon).  
			\end{split}\end{align*}  
			
			Taking $\epsilon \to 0$, we conclude that Weyl's law holds for $V$ with lower regularity.  
			
			The semiclassical Weyl law for $V$ can be established in a similar manner. 
		\end{subsubsection}
		\subsubsection{Proof of Theorem \ref{approach} }\label{provethm52}
		We now turn to the proof of Theorem \ref{approach}.  
		
		Recall that $V$ satisfies the doubling condition if there exists $C_V > 0$ such that, for sufficiently large $\lambda$,  
		\[
		\sigma(2\lambda) \leq C_V \sigma(\lambda),  
		\]  
		where $\sigma(\lambda) := |\{x \in M : V(x) \leq \lambda\}|$.  
		
		From item \eqref{i2} in Theorem \ref{approach}, we can derive item \eqref{i3} as follows:  
		
		\begin{lem}  \label{dou}
			If $V$ satisfies the doubling condition with constant $C_V > 0$, and $V_\delta$ satisfies item \eqref{i2} in Theorem \ref{approach}, then $V_\delta$ also satisfies the doubling condition with constant $C_{V_\delta} \leq C_V^4$.  
		\end{lem}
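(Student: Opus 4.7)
The plan is to transfer the doubling estimate from $V$ to $V_\delta$ by sandwiching sublevel sets of $V_\delta$ between those of $V$. First I would use item \eqref{i2} of Theorem \ref{approach}, together with $\delta<\tfrac12$, to rewrite $|V-V_\delta|\leq\delta(1+V)$ as the pointwise sandwich
\[
(1-\delta)V-\delta\;\leq\; V_\delta\;\leq\;(1+\delta)V+\delta,
\]
which for every $\lambda>\delta$ translates into the inclusions
\[
\left\{V\leq\tfrac{\lambda-\delta}{1+\delta}\right\}\;\subseteq\;\{V_\delta\leq\lambda\}\;\subseteq\;\left\{V\leq\tfrac{\lambda+\delta}{1-\delta}\right\}.
\]
Setting $\sigma_\delta(\lambda):=|\{V_\delta\leq\lambda\}|$ and taking measures yields
\[
\sigma\!\left(\tfrac{\lambda-\delta}{1+\delta}\right)\;\leq\;\sigma_\delta(\lambda)\;\leq\;\sigma\!\left(\tfrac{\lambda+\delta}{1-\delta}\right).
\]

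Next I would apply the upper bound at $2\lambda$ and the lower bound at $\lambda$ to estimate
\[
\frac{\sigma_\delta(2\lambda)}{\sigma_\delta(\lambda)}\;\leq\;\frac{\sigma\!\bigl(\tfrac{2\lambda+\delta}{1-\delta}\bigr)}{\sigma\!\bigl(\tfrac{\lambda-\delta}{1+\delta}\bigr)}.
\]
The ratio of the two arguments inside $\sigma$ equals $\frac{(2\lambda+\delta)(1+\delta)}{(\lambda-\delta)(1-\delta)}$, which is decreasing in $\lambda$ and converges to $2(1+\delta)/(1-\delta)\leq 6$ as $\lambda\to\infty$ when $\delta<\tfrac12$. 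Hence there exists a threshold $\lambda_{V_\delta,0}$ (depending on $\delta$) beyond which this ratio is at most $16=2^4$ and the denominator-argument itself exceeds $2$, so that the doubling hypothesis \eqref{double} on $\sigma$ can be iterated four times.

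Performing that iteration gives $\sigma(16s)\leq C_V^{4}\,\sigma(s)$ for $s\geq 2$, and therefore
\[
\sigma_\delta(2\lambda)\;\leq\;\sigma\!\bigl(\tfrac{2\lambda+\delta}{1-\delta}\bigr)\;\leq\;C_V^{4}\,\sigma\!\bigl(\tfrac{\lambda-\delta}{1+\delta}\bigr)\;\leq\;C_V^{4}\,\sigma_\delta(\lambda)
\]
for every $\lambda\geq\lambda_{V_\delta,0}$, which is exactly the doubling condition for $V_\delta$ with $C_{V_\delta}\leq C_V^{4}$. There is no real obstacle in this argument; the only care-point is the simultaneous choice of the threshold so that both $\tfrac{\lambda-\delta}{1+\delta}\geq 2$ and the argument ratio stays below $2^4$ hold, and both conditions are satisfied for all sufficiently large $\lambda$.
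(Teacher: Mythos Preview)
Your argument is correct and follows essentially the same route as the paper: both use item \eqref{i2} to obtain the pointwise sandwich $(1-\delta)V-\delta\leq V_\delta\leq(1+\delta)V+\delta$, convert it into inclusions of sublevel sets, observe that for large $\lambda$ the two $\sigma$-arguments differ by a factor at most $16$, and then iterate the doubling condition for $V$ four times. The paper simply picks the concrete intermediate bounds $6\lambda$ and $3\lambda/8$ (whose ratio is exactly $16$) in place of your asymptotic ratio computation, but the content is the same.
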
  
		\begin{proof}  
			Fix $\delta \in (0, \frac{1}{2})$. For any sufficiently large $\lambda > 1$, using $(1+\delta)V + \delta \geq V_\delta \geq (1-\delta)V - \delta$, we have:  
			\begin{align*}  
				&\ \ \ \ 	|\{x : V_\delta(x) \leq 2\lambda\}|  
				\leq \left| \left\{x : V(x) \leq \frac{2\lambda + \delta}{1-\delta} \right\} \right|  \leq |\{x : V(x) \leq 6\lambda\}| \\  & \leq C_V^4 |\{x : V(x) \leq 3\lambda/8\}|  \leq C_V^4 |\{x : V_\delta(x) \leq \lambda\}|.  
			\end{align*}  
		\end{proof}  
		
		It remains to construct $V_\delta$ that satisfies items \eqref{i1} and \eqref{i2} in Theorem \ref{approach}.
		
		Set $\Omega_0 := \{x \in M : V(x) < 2\}$. For $k \geq 1$, let $\Omega_k := \{x \in M : V(x) \in (2^{k-1}, 2^{k+1})\}$.  
		
		Set $Z_k := \{x \in M : V(x) = 2^k\}$. Since $\lim_{x\to\infty} V(x)=\infty$, $Z_k$ is compact. 
		Then $\partial \Omega_k = Z_{k-1} \cup Z_k$ for $k \geq 1$.
		\begin{lem}\label{diam}
			For $k \geq 1$, let $d_k$ denote the distance between $Z_{k-1}$ and $Z_k$. Then
			\[
			d_k \geq \frac{1}{2^{2+ka} A_V}.
			\]
		\end{lem}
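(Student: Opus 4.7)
The approach is to connect $Z_{k-1}$ and $Z_k$ by a distance-realizing curve and bound the total variation of $V$ along it using the gradient estimate \eqref{secondd11}. Because $V$ is proper and continuous, the level sets $Z_{k-1},Z_k$ are compact, and since $(M,g)$ is complete there exists a unit-speed minimizing geodesic $\gamma:[0,d_k]\to M$ with $\gamma(0)\in Z_{k-1}$ and $\gamma(d_k)\in Z_k$ (an almost-minimizer of length $d_k+\epsilon$ works equally well, letting $\epsilon\to0$ at the end).

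The key observation is that the bound $|\nabla V|\le A_V(1+V^{1+a})$ deteriorates where $V$ is large, so it should be applied only on a sub-arc of $\gamma$ on which $V\circ\gamma$ is a priori controlled. I would set $f(s):=V(\gamma(s))$, define $s_1:=\sup\{s\in[0,d_k]:f(s)=2^{k-1}\}$ and $s_2:=\inf\{s\in[s_1,d_k]:f(s)=2^k\}$; by continuity $f([s_1,s_2])\subset[2^{k-1},2^k]$, so \eqref{secondd11} together with the elementary inequality $1\le 2^{k(1+a)}$ (valid for $k\ge1$, $a\ge 0$) yields $|\nabla V(\gamma(s))|\le A_V\,2^{k(1+a)+1}$ on $[s_1,s_2]$.

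The fundamental theorem of calculus then gives
$$2^{k-1}=f(s_2)-f(s_1)\le\int_{s_1}^{s_2}|\nabla V(\gamma(s))|\,ds\le A_V\,2^{k(1+a)+1}\,(s_2-s_1),$$
which rearranges to $s_2-s_1\ge(A_V\,2^{ka+2})^{-1}$, and since $d_k\ge s_2-s_1$ the desired lower bound follows. The only conceptual obstacle — really the one subtle point — is the need to restrict to $[s_1,s_2]$ before invoking the gradient bound; without this truncation a geodesic that excurses through a region of large $V$ would render \eqref{secondd11} too weak to be useful. Everything else is routine estimation.
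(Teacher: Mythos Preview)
Your proposal is correct and follows essentially the same approach as the paper: connect the two level sets by a minimizing geodesic, control $V$ along it to lie in $[2^{k-1},2^k]$, and apply the gradient bound \eqref{secondd11} together with the mean value/fundamental theorem of calculus. The only cosmetic difference is that the paper observes the entire minimizing geodesic already stays in the strip $\{2^{k-1}\le V\le 2^k\}$ (by minimality of $d_k$), whereas you pass to the sub-arc $[s_1,s_2]$; both yield the same estimate.
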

		\begin{proof}
			Let $x \in Z_{k-1}$ and $y \in Z_k$ such that $d(x, y) = d_k$. Let $\gamma: [0, d_k] \to M$ be the shortest geodesic connecting $x$ and $y$.  
			
			For any $s \in (0, d_k)$, we have $V(\gamma(s)) \in (2^{k-1}, 2^k)$.  
			
			By the mean value theorem and \eqref{secondd11},  
			\[
			2^{k-1} = |V(x) - V(y)| \leq d_k \sup_s |\nabla V|(\gamma(s)) \leq 2 d_k A_V \sup_s |V|^{1+a} \leq 2 d_k 2^{k(1+a)}.
			\]
			The lemma follows.  
		\end{proof}

		Let $n = \dim(M)$. Let $\psi \in C_c^\infty(\R)$ such that $0 \leq \psi \leq 1$, $\psi|_{[-\half, \half]} \equiv 1$, and $\psi(s) = 0$ if $|s| \geq 1$.  Let $c_\phi:=\int_\R\psi(s) ds.$
		For any $\ve > 0$, let $\phi_\ve: M \times M \to \R$ be the function  
		\[
		\phi_\ve(x, y) = c_\psi^{-1}\o_n^{-1}\ve^{-n}\psi\left(\frac{d^2(x, y)}{\ve^2}\right).
		\]
		
		Since $M$ has bounded geometry, one can see that
		\be\label{unit}
		\int_M \phi_\ve(x,y) dy=1+O(\ve^2),\ve\to0.
		\ee

		Let $V_\ve^k \in C^\infty(\Omega_k)$ be given by  
		\[
		V_\ve^k(x) = \int_M \phi_{\frac{\ve}{2^{ka}}}(x, y) V(y) \, dy.
		\]  
		
		\begin{lem}\label{lem54}
			If $\ve > 0$ is sufficiently small, there exist constants $C > 0$ and $C_l > 0$ such that for any $k, l \in \mathbb{N}$ and $x \in \Omega_k$, the following hold:
			\begin{align}
				&|V_\ve^k(x) - V(x)| \leq (1 + C\ve)(V(x) + 1), \label{lem541} \\
				&\frac{|\nabla^l V_\ve^k(x)|}{1 + (V_\ve^k(x))^{1 + la}} \leq C_l \ve^{-l}. \label{lem542}
			\end{align}
		\end{lem}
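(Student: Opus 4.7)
The plan is to exploit the carefully chosen scale $r_k := \ve/2^{ka}$, which pairs the mollification radius with the growth rate of $|\nabla V|$ coming from \eqref{secondd11}. Before either estimate, I would record three basic observations. First, for $x \in \Omega_k$ we have $V(x) \in (2^{k-1}, 2^{k+1})$, so $V(x)$ is comparable to $2^k$. Second, the kernel $\phi_{r_k}(x, \cdot)$ is supported in $B(x, r_k)$, and by Lemma \ref{diam}, for $\ve$ small enough uniformly in $k$ this ball meets at most $\Omega_{k-1} \cup \Omega_k \cup \Omega_{k+1}$, so $V(y)$ remains comparable to $2^k$ on the support. Third, on the same support \eqref{secondd11} yields $|\nabla V|(y) \leq C \cdot 2^{k(1+a)}$.

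For \eqref{lem541}, I would split
\begin{align*}
V_\ve^k(x) - V(x) = \int_M \phi_{r_k}(x,y)\big(V(y) - V(x)\big)\,dy + V(x)\left(\int_M \phi_{r_k}(x,y)\,dy - 1\right).
\end{align*}
The first term is controlled by the mean value theorem combined with the gradient bound: $|V(y)-V(x)| \leq C \cdot 2^{k(1+a)} \cdot r_k = C\ve \cdot 2^k \leq C'\ve V(x)$. The second term is $O(r_k^2)V(x) = O(\ve^2)V(x)$ by \eqref{unit}. Adding these produces the desired pointwise bound on $|V_\ve^k - V|$.

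For \eqref{lem542}, I would differentiate under the integral to obtain $\nabla_x^l V_\ve^k(x) = \int_M \nabla_x^l \phi_{r_k}(x,y)\,V(y)\,dy$. Bounded geometry ensures uniform control of the covariant derivatives of $d^2(x,y)$ near the diagonal, so expanding $\nabla_x^l \psi\big(d^2(x,y)/r_k^2\big)$ by the chain rule---each differentiation injecting a factor of $r_k^{-2}$ together with a uniformly bounded derivative of $d^2$---gives $|\nabla_x^l \phi_{r_k}(x,y)| \leq C_l r_k^{-n-l}$ on the support. Since the support has volume at most $C r_k^n$ and $V(y) \leq 2^{k+1}$ on it,
\begin{align*}
|\nabla^l V_\ve^k(x)| \leq C_l r_k^{-l} \cdot 2^{k+1} = C_l \ve^{-l} 2^{kla} \cdot 2^{k+1} \leq C_l' \ve^{-l} V(x)^{1+la},
\end{align*}
where the last inequality uses $V(x) > 2^{k-1}$ to absorb $2^{kla}$ into $V(x)^{la}$ and $2^{k+1}$ into a multiple of $V(x)$. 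Invoking \eqref{lem541} to pass from $V(x)$ to $V_\ve^k(x)$ (which are comparable for $\ve$ small) then yields the stated quotient bound.

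The main technical obstacle is the derivative estimate $|\nabla_x^l \phi_{r_k}| \leq C_l r_k^{-n-l}$ in the Riemannian (as opposed to Euclidean) setting: it requires uniform bounds on the covariant derivatives of $d^2(x, y)$ near the diagonal, which is precisely the role of the bounded geometry assumption, via standard estimates in normal coordinates as used in \cite{DY2020index}. A secondary subtlety is ensuring that $\ve$ can be chosen small enough independently of $k$ for observation (ii) to hold; this is immediate from Lemma \ref{diam} since $r_k = \ve/2^{ka}$ is strictly smaller than the lower bound $1/(2^{2+ka}A_V)$ for $d_k$ once $\ve < 1/(4A_V)$.
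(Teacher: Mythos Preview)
Your proposal is correct and follows essentially the same route as the paper's proof: both use Lemma~\ref{diam} to localize the support of $\phi_{r_k}(x,\cdot)$ so that $V(y)$ stays comparable to $2^k$, apply the mean value theorem with the gradient bound \eqref{secondd11} for \eqref{lem541}, and use the scaling $|\nabla^l\phi_{r_k}|\le C_l r_k^{-n-l}$ together with $V(y)\lesssim V(x)$ and then \eqref{lem541} to obtain \eqref{lem542}. One small slip: on the support you only get $V(y)\le 2^{k+2}$ (since the ball can reach $\Omega_{k+1}$), not $2^{k+1}$, but this only changes constants.
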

		
		\begin{proof}
			By Lemma \ref{diam}, if $\ve$ is small and $x \in \Omega_k$, then for $d(x, z) \leq \frac{\ve}{2^{ka}}$, we have  
			\begin{equation}\label{eq64}
				|V(y)| \leq 2^{k+2} \leq 8(V(x) + 1).
			\end{equation}
			
			Then we compute:
			\begin{align*}
				\begin{split}
					&|V_\ve^k(x) - (1 + O(\ve^2))V(x)| = \left|\int_M \phi_{\frac{\ve}{2^{ka}}}(x, y) \big(V(y) - V(x)\big) \, dy \right| \\
					&\leq \frac{\ve}{2^{ka}} \int_M \phi_{\frac{\ve}{2^{ka}}}(x, y) \sup_{z : d(x, z) \leq \frac{\ve}{2^{ka}}} |\nabla V(z)| \, dy \\
					&\leq C\ve \int_M \phi_{\frac{\ve}{2^{ka}}}(x, y)(V(x) + 1) \, dy \quad \text{(by \eqref{eq64} and \eqref{secondd11})} \\
					&\leq C'\ve (V(x) + 1) \quad \text{(by \eqref{unit})}.
				\end{split}
			\end{align*}
			Thus, \eqref{lem541} is proved.
			
			Next, by the construction of $V_\ve^k$, we estimate:
			\begin{align*}
				\begin{split}
					|\nabla^l V_\ve^k(x)| &\leq C_l \ve^{-n} 2^{kna} \int_{\{y : d(y, x) \leq \frac{\ve}{2^{ka}}\}} \ve^{-l} 2^{kal} |V(y)| \, dy \\
					&\leq C_l' \ve^{-n} 2^{kna} \int_{\{y : d(y, x) \leq \frac{\ve}{2^{ka}}\}} \ve^{-l} (V(x) + 1)^{1 + al} \, dy \quad \text{(by \eqref{eq64})} \\
					&\leq C_l'' \ve^{-l} \big(1 + V(x)^{1 + al}\big) \\
					&\leq C_l''' \ve^{-l} \big(1 + (V_\ve^k(x))^{1 + al}\big) \quad \text{(by \eqref{lem541})}.
				\end{split}
			\end{align*}
			Thus, \eqref{lem542} is proved.
		\end{proof}
		
		Let $\{\eta_k\}_{k=0}^\infty$ be a partition of unity subordinate to the open cover $\{\Omega_k\}_{k=1}^\infty$. By Lemma \ref{diam}, we can ensure that  
		\begin{equation}\label{eq69}
			|\nabla \eta_k|(x) \leq C 2^{ka} \leq 2C (1 + V(x))^a.
		\end{equation}
		
		Set $V_\ve := \sum_{k=0}^\infty \eta_k V_\ve^k$. Then, item \eqref{i1} in \Cref{approach} follows from \eqref{lem542} and \eqref{eq69}, while item \eqref{i2} in \Cref{approach} follows from \eqref{lem541}.
		\fi
		\def\subsec{1}
		\if\subsec0
		\subsection{Dealing}
		In this subsection, we assume that $V$ satisfies the following:\\
		There exist $\alpha\in[0,\frac{1}{2}]$, and a continuous decreasing function $v:[0,\infty)\mapsto \R$ with $\lim_{t\to\infty}v(t)=0$, such that for any $x,y\in M$, whenever $d(x,y)<{\tau_0}$, we have
		\begin{equation}\label{secondcond}
			|V(x)-V(y)|\leq d(x,y)^{2\alpha}\max\big\{1,|V(x)|^{1+\alpha}\big\}v\big(V(x)\big).
		\end{equation}
		
		\begin{thm}\label{approach1}
			For any $ \delta \in (0, \frac{1}{2}) $, there exists $ V_\delta \in C^\infty(M) $ such that:  
			\begin{enumerate}[(1)]  
				\item\label{i12} $ |V - V_\delta| \leq \delta(1 + V) $.  
				\item\label{i13} If $ V $ satisfies the doubling condition, then $ V_\delta $ also satisfies the doubling condition.  
				\item For any $x,y\in M$, whenever $d(x,y)<{\tau_0}$, we have
				\begin{equation}\label{secondd2}
					|V_\delta(x)-V_{\delta}(y)|\leq 2d(x,y)^{2\alpha}\max\big\{1,|V_\delta(x)|^{1+\alpha}\big\}\cdot v\big((1-2\delta)V_\delta(x)\big)+2\delta(1+V_\delta(x)).
				\end{equation} 
				\item 	\label{i14} $\lim_{x\to\infty}\frac{|\nabla^kV|}{|V|^{1+\frac{k}{2}}} =0.$
			\end{enumerate}  
		\end{thm}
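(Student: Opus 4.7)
The plan is to run the construction from the proof of Theorem \ref{approach} with the gradient bound \eqref{secondd11} replaced by the Hölder-type bound \eqref{secondcond}. First I introduce the dyadic level-set decomposition $\Omega_k = \{V \in (2^{k-1},2^{k+1})\}$, $Z_k = \{V = 2^k\}$, and derive a lower bound on $d_k := \mathrm{dist}(Z_{k-1},Z_k)$ analogous to Lemma \ref{diam}: if $x \in Z_{k-1}$ and $y \in Z_k$ realize $d_k$, then \eqref{secondcond} applied at $x$ gives $2^{k-1} = |V(x) - V(y)| \leq d_k^{2\alpha}\, 2^{(k-1)(1+\alpha)}\, v(2^{k-1})$, whence $d_k \geq c\,2^{-k/2}\,v(2^{k-1})^{-1/(2\alpha)}$. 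In particular the $\Omega_k$ are separated at an explicit (if weaker) scale.

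With this in hand, I choose a mollification scale $\epsilon_k$ comparable to a small multiple of $d_k$, define
\[
V_\epsilon^k(x) := \int_M \phi_{\epsilon_k}(x,y)\, V(y)\, dy
\]
with $\phi_\epsilon$ as in the proof of Theorem \ref{approach}, and set $V_\delta := \sum_{k \geq 0} \eta_k V_\epsilon^k$ for a partition of unity $\{\eta_k\}$ subordinate to $\{\Omega_k\}$ with $|\nabla\eta_k| \leq C\,d_k^{-1}$. Plugging \eqref{secondcond} into the mollification immediately yields, on $\Omega_k$,
\[
|V_\epsilon^k - V|(x) \leq C\epsilon_k^{2\alpha}\max\{1,V(x)^{1+\alpha}\}\,v(V(x)) + O(\epsilon_k^2)\,V(x),
\]
so choosing $\epsilon_k$ small enough (using that $v(2^{k-1}) \to 0$) gives item \eqref{i12}; item \eqref{i13} is then immediate from Lemma \ref{dou}. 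For item \eqref{i14} (read as a statement about $V_\delta$, since $V$ is not smooth), I reuse the derivative bound of Lemma \ref{lem54}: $|\nabla^l V_\epsilon^k| \leq C_l\epsilon_k^{-l}\sup_{\Omega_k} V$, while on $\Omega_k$ we have $V \gtrsim 2^k$. Combined with $\epsilon_k^{-1}$ bounded above by a fixed power of $2^{k/2}v(2^{k-1})^{1/(2\alpha)}$, this makes $|\nabla^l V_\delta|/|V_\delta|^{1+l/2} \to 0$ as $k \to \infty$, since $v \to 0$.

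Item \eqref{secondd2} is the real obstacle. For $x,y$ with $d(x,y)<\tau$ I decompose
\[
V_\delta(x) - V_\delta(y) = \sum_k \eta_k(y)\bigl(V_\epsilon^k(x) - V_\epsilon^k(y)\bigr) + \sum_k \bigl(\eta_k(x) - \eta_k(y)\bigr) V_\epsilon^k(x).
\]
The first sum is handled by bringing the difference inside the convolution and applying \eqref{secondcond} to the integrand; the delicate point is that \eqref{secondcond} delivers $v(V(z))$ for $z$ in the support of $\phi_{\epsilon_k}$, whereas the target \eqref{secondd2} demands $v((1-2\delta)V_\delta(x))$. This gap is bridged by the monotonicity of $v$ together with the pointwise lower bound $V(z) \geq (1-2\delta)V_\delta(x)$, which follows from item \eqref{i12} once $\epsilon_k$ is chosen sufficiently small relative to $d_k$. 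The second sum is controlled via $|\nabla\eta_k| \lesssim (1+V)^{\alpha}$ (the analogue of \eqref{eq69} in the present setting) and produces the residual $2\delta(1+V_\delta(x))$ after invoking item \eqref{i12} once more. The main tension in the argument is arranging a single choice of $\epsilon_k$ that simultaneously secures items \eqref{i12}, \eqref{secondd2} and \eqref{i14}, since making $\epsilon_k$ smaller tightens the Hölder constant but loosens the derivative bound.
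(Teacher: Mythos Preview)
Your construction is the same as the paper's—dyadic level-set decomposition, mollification at a $k$-dependent scale, partition of unity—and your lower bound on $d_k$ matches the paper's \eqref{eq70}. The one substantive divergence is that you have misidentified where the work lies. In the paper, items \eqref{i13} and \eqref{secondd2} are dismissed in one line as \emph{immediate} consequences of item \eqref{i12}: once $|V-V_\delta|\le\delta(1+V)$, the inequality \eqref{secondd2} follows by the triangle inequality
\[
|V_\delta(x)-V_\delta(y)|\le |V_\delta(x)-V(x)|+|V(x)-V(y)|+|V(y)-V_\delta(y)|
\]
together with the original Hölder bound \eqref{secondcond} on $V$ and the monotonicity of $v$ (using $V(x)\ge (1-2\delta)V_\delta(x)$, which comes from \eqref{i12}). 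The additive slack $2\delta(1+V_\delta(x))$ in \eqref{secondd2} is there precisely to absorb the two endpoint errors. So there is no need to push the difference inside the convolution or to analyze the partition-of-unity cross terms; your ``real obstacle'' is not an obstacle at all, and the ``tension'' you describe in choosing $\epsilon_k$ disappears—only items \eqref{i12} and \eqref{i14} constrain the scale.

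For those two items the paper makes the explicit choice $\epsilon_k=\min\{(\delta/v_k)^{1/(2\alpha)},2^{k/4}\}$ (with mollification radius $\epsilon_k 2^{-k/2}$), which simultaneously gives \eqref{i12} via \eqref{eq72} and \eqref{i14} via the derivative bound $|\nabla^l V_\epsilon^k|/(1+(V_\epsilon^k)^{1+l/2})\le C_l\epsilon_k^{-l}\to 0$. Your vaguer ``small multiple of $d_k$'' is compatible with this, but writing down the explicit $\epsilon_k$ makes the balance transparent.
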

		
		\begin{proof}
			In fact, it is straightforward to see that item \eqref{i13} follows from item \eqref{i12}, and \eqref{secondd2} directly follows from item \eqref{i12} and \eqref{secondcond}.
			
			We use the same notation as in \cref{provethm52}. Let $v_k=v(2^{k-1}),k\geq 1$ and $v_0=v(0)$, then as in \Cref{diam}, by \eqref{secondcond}, one can show that
			\begin{equation}\label{eq70}
				d_k\geq \frac{1}{(2v_k)^{\frac{1}{2a}}2^{\frac{k}{2}}}.
			\end{equation}
			\def\tve{{\tilde{\ve}}}
			We construct $V_\ve^k \in C^\infty(\Omega_k)$ as follows:
			\[
			V_\ve^k(x) = \int_M \phi_{\ve_k2^{-k/2}}(x, y) V(y) \, dy,
			\]  
			where $\ve_k=\min\left\{\left(\frac{\delta}{v_k}\right)^{\frac{1}{2a}},2^{k/4}\right\}.$
			Proceeding as in the proof of \Cref{lem54}, using \eqref{secondcond}, one can see that for some constant $C$ and $C_l$
			\begin{equation}\label{eq72}
				|V_\ve^k(x)-V(x)|\leq (1+C\delta)(V(x)+1),x\in\Omega_k
			\end{equation}
			and
			\begin{equation}
				\frac{|\nabla^lV_\ve^k(x)|}{1+\big((V_\delta^k(x))\big)^{1+l/2}}\leq C_l\delta_k^{-l}\leq \max\left\{\left(\frac{v_k}{\delta}\right)^{\frac{1}{2a}},2^{-k/2}\right\},x\in\Omega_k.
			\end{equation}
			
			Let $\eta_k$ be the partition of unit w.r.t. $\{\Omega_k\}_{k=0}^\infty$, by \eqref{eq70}, we can assume that 
			\begin{equation}\label{eq74}
				|\nabla \eta_k|\leq (2v_k)^{\frac{1}{2a}}2^{\frac{k}{2}}.
			\end{equation}
			Then by \eqref{eq72}-\eqref{eq74}, one can show that $V_\delta=\sum_{k=0}^\infty \eta_kV_\delta^k$ satisfies our assumption as in \cref{provethm52}.
		\end{proof}
		\begin{thm}\label{limitthm1}
			The following limits hold uniformly for \(t, \h \in (0, 1]\):  
			\be\label{eq75}
			\lim_{\delta \to 0} \frac{\Tr(e^{-t(\h^2\Delta + V_\delta)})}{\Tr(e^{-t(\h^2\Delta + V)})} = 1,
			\ee  
			and  
			\be\label{eq76}
			\lim_{\delta \to 0} \frac{\int_M e^{-tV_\delta(x)} dx}{\int_M e^{-tV(x)} dx} = 1.
			\ee
		\end{thm}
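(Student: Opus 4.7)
The plan is to dispose of \eqref{eq76} first by a Tauberian-style cutoff on the $\sigma$ side, then to reduce \eqref{eq75} to it through operator comparisons combined with the parametrix estimates of Section \ref{expansion}.

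For \eqref{eq76}, item \eqref{i12} of Theorem \ref{approach1} yields $(1-\delta)V - \delta \leq V_\delta \leq (1+\delta)V + \delta$, whence
\[
e^{-\delta t}\int_M e^{-t(1+\delta)V(x)}\,dx \;\leq\; \int_M e^{-tV_\delta(x)}\,dx \;\leq\; e^{\delta t}\int_M e^{-t(1-\delta)V(x)}\,dx.
\]
Since $t\in(0,1]$ the factors $e^{\pm \delta t}$ converge to $1$ uniformly, so by Proposition \ref{Prop-V}(2) it suffices to show
\[
\lim_{\delta\to 0}\frac{\int_0^\infty e^{-t(1\pm\delta)r}\,d\sigma(r)}{\int_0^\infty e^{-tr}\,d\sigma(r)} = 1 \quad\text{uniformly in }t\in(0,1].
\]
I would cut the integration at $r = A/t$: on $[0,A/t]$ the pointwise ratio of exponentials is at most $e^{A\delta}$, while the tail $\int_{A/t}^\infty e^{-t(1-\delta)r}\,d\sigma(r)$ is estimated by a dyadic decomposition, exactly as in the proof of Lemma \ref{lem22}, producing a factor that tends to $0$ (relative to the full integral) as $A\to\infty$ thanks to the doubling of $\sigma$. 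Fixing $A$ large first and then sending $\delta\to 0$ yields the uniform limit.

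For \eqref{eq75}, since $\h^2\Delta\geq 0$, the pointwise bound on $V-V_\delta$ upgrades to the operator inequalities
\[
(1-\delta)(\h^2\Delta + V) - \delta \;\leq\; \h^2\Delta + V_\delta \;\leq\; (1+\delta)(\h^2\Delta + V) + \delta,
\]
and min-max on the (discrete) spectra gives
\[
e^{-\delta t}\Tr\!\bigl(e^{-t(1+\delta)(\h^2\Delta+V)}\bigr) \;\leq\; \Tr\!\bigl(e^{-t(\h^2\Delta+V_\delta)}\bigr) \;\leq\; e^{\delta t}\Tr\!\bigl(e^{-t(1-\delta)(\h^2\Delta+V)}\bigr).
\]
It then remains to show that $\Tr(e^{-t(1\pm\delta)(\h^2\Delta+V)})/\Tr(e^{-t(\h^2\Delta+V)})\to 1$ uniformly in $t,\h\in(0,1]$. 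For this I would combine the on-diagonal identity $K_\h^0(s,x,x)=(4\pi s\h^2)^{-n/2}e^{-sV(x)}$ with Lemma \ref{lem39} to obtain the uniform expansion
\[
\Tr\!\bigl(e^{-s(\h^2\Delta + V)}\bigr) = (4\pi s\h^2)^{-n/2}\int_M e^{-sV(x)}\,dx\,\bigl(1+o(1)\bigr),
\]
after which the trace ratio reduces to $(1\pm\delta)^{-n/2}$ times a ratio of $V$-integrals already handled by \eqref{eq76}, giving the desired conclusion.

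The main obstacle is making this integrated remainder genuinely uniform on all of $(0,1]^2$. The leading term $(t\h^2)^{-n/2}\cdot t\h^2$ in Lemma \ref{lem39} is not integrable on noncompact $M$ as stated, so I would have to sharpen that proof by keeping the factor $e^{-7sV(x)/8}$ from $\tilde R_\h$ alive, which effectively replaces $t\h^2$ by $\min\{t\h^2,\,\h^2/V(x)\}$; its integral against $dx$ is then controlled by $\sigma(1/t)$ and, via Lemma \ref{lem22} and the doubling of $\sigma$, by $\int_M e^{-tV(x)}\,dx$. The second remainder $\h^{2a}v(V(x))$ is made a small multiple of the main term by invoking $v(V)\to 0$ at infinity, which is built into the $a$-regularity of $V_\delta$ and item \eqref{i14} of Theorem \ref{approach1}. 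In the remaining range, where $t\h^2$ stays bounded away from $0$, the spectral side gives continuous dependence of $\Tr(e^{-sH})$ on $s$, so the two-sided bound above yields the ratio $\to 1$ directly.
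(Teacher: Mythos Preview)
The paper does not actually prove Theorem~\ref{limitthm1}: the statement sits inside a conditionally excluded block (the \texttt{\textbackslash if\textbackslash subsec0 \ldots \textbackslash fi} with \texttt{\textbackslash subsec} set to~$1$), and even within that block no proof is supplied. The only closely related result the paper does prove is Corollary~\ref{cor31}, which is exactly your argument for \eqref{eq76} once the two-sided bound $(1\pm\delta)V\pm\delta$ is substituted for $V_\delta$. So for \eqref{eq76} your plan is correct and coincides with what the compiled paper establishes.

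For \eqref{eq75} your operator-inequality reduction is sound, and you correctly isolate the crux: uniformity of $\Tr(e^{-t(1\pm\delta)H_\h})/\Tr(e^{-tH_\h})\to1$ over the whole square $(t,\h)\in(0,1]^2$. But the proposed resolution has a real gap in the regime where $t\h^2$ stays bounded away from~$0$. First, the sharpened remainder $\min\{t\h^2,\h^2/V(x)\}$ need not be integrable on $M$: a dyadic computation on $\{V>1/t\}$ gives $\int_{\{V>1/t\}} V^{-1}\,dx \lesssim t\,\sigma(1/t)\sum_{k\ge0} C_V^{k+1}2^{-k}$, which diverges whenever the doubling constant satisfies $C_V\ge 2$, so your claim that this integral is ``controlled by $\sigma(1/t)$'' is unjustified. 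Second, even if one grants integrability, the ratio of either remainder term in Lemma~\ref{lem39} to the main term is of order $t\h^2$ (resp.\ $\h^{2a}v(V)$ on $\{V\le L/t\}$), and neither is small when $t$ and $\h$ are both bounded below. Third, your fallback---``continuous dependence of $\Tr(e^{-sH_\h})$ on $s$''---gives only pointwise continuity for each fixed $\h$; the required uniformity of the modulus of continuity over $\h\in(0,1]$ is precisely what is at stake and is not supplied. A route that would close the gap is to prove a \emph{uniform} spectral tail bound of the form $\Tr(e^{-(t/2)H_\h})\le C\,\Tr(e^{-tH_\h})$ for all $(t,\h)\in(0,1]^2$ (a Lemma~\ref{lem22}-type estimate for the counting measure $d\N_\h$, uniform in $\h$); with that in hand your cutoff at $\lambda_k\le\Lambda/t$ goes through exactly as in the $\sigma$-argument. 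That uniform doubling, however, is itself nontrivial and is nowhere established in the paper.
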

		\fi
		\subsection{Proof of Theorem \ref{heatexpan}}\label{proofthm44}
Below is an outline of the proof of \Cref{heatexpan}.
\Cref{prop313} provides the asymptotic formulas for the integral of the heat kernel over a time-dependent bounded region (up to sets of measure zero).
Thus, to prove \Cref{heatexpan}, more specifically, \eqref{asymtr1} and \eqref{asymtr2}, we need to control the integrals of the heat kernel and the exponentiated potential \( e^{-tV(x)}\) outside the time-dependent region. The estimates in \Cref{prop311}, \Cref{prop390}, \Cref{prop39}, and \Cref{prop47} address this issue. There is a price to pay however, namely we have to sacrifice some time for the integral of the exponentiated potential, Cf. \Cref{prop47}. Thus, we will also need to show that it will not cause any problem for our final asymptotic formulas. This is dealt with using the uniform limit in \Cref{cor31}.

 %We show that the integral of \( e^{-tV(x)} \) is concentrated on the set \(\{x \in M : V(x) \leq \frac{A}{t}\}\) for some \( A > 0 \), which shows that the limit \eqref{eq50} converges uniformly.
We now look at the integral of \( e^{-tV(x)}\) outside a time-dependent region.
		\begin{prop}\label{prop311}Assume $V$ satisfies the doubling condition \eqref{double}. Then
		for any $\ep>0$, there exists $A = A(\epsilon)$ such that for all $t\in(0,2]$,
		\be\label{eq86}
		{\int_{\{x \in M : V(x) \geq \frac{A}{t}\}} e^{-tV(x)}\,dx}\leq \epsilon \int_M e^{-tV(x)}\,dx, 
		\ee
		\end{prop}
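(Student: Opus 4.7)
By \Cref{Prop-V}(2), both sides of the desired inequality can be rewritten in terms of the distribution function $\sigma$: the left-hand side equals $\int_{A/t}^\infty e^{-tr}\,d\sigma(r)$ and the total integral equals $\int_0^\infty e^{-tr}\,d\sigma(r)$. After rearrangement, it therefore suffices to show that the ratio
\[
\frac{\int_{A/t}^\infty e^{-tr}\,d\sigma(r)}{\int_0^{A/t} e^{-tr}\,d\sigma(r)}
\]
tends to $0$ as $A\to\infty$, uniformly for $t\in(0,2]$. The strategy is to exploit the doubling condition twice: once to bound the tail from above, and once to bound the main part from below in a way that retains $\sigma(A/t)$ as a common reference volume.

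For the upper bound on the tail, the plan is to decompose $[A/t,\infty)=\bigcup_{k\geq 0}[2^k A/t,\,2^{k+1}A/t)$ in the same spirit as the proof of \Cref{lem22}, estimate $e^{-tr}\leq e^{-2^k A}$ on each piece, and iterate the doubling condition $k+1$ times---valid once $A/t\geq 2$, which holds for all $A\geq 4$ and $t\in(0,2]$---to get $\sigma(2^{k+1}A/t)\leq C_V^{k+1}\sigma(A/t)$. Summing the rapidly convergent series $\sum_{k\geq 0}C_V^{k+1}e^{-2^k A}$, which is $O(e^{-A})$ once $A$ is large enough that $C_V e^{-A}<\tfrac12$, yields
\[
\int_{A/t}^\infty e^{-tr}\,d\sigma(r) \,\leq\, C\, e^{-A}\,\sigma(A/t).
\]

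The crucial step is the lower bound on the main part. The naive estimate $\int_0^{A/t} e^{-tr}\,d\sigma(r)\geq e^{-A}\sigma(A/t)$ gives only a fixed ratio $O(1)$, with no $A$-decay. Instead, I would restrict the integral to the sub-interval $[0, A/(2t)]$, where $e^{-tr}\geq e^{-A/2}$, and then use the doubling identity $\sigma(A/t)\leq C_V\sigma(A/(2t))$ (valid for $A\geq 8$) in the reverse direction to obtain
\[
\int_0^{A/t} e^{-tr}\,d\sigma(r) \,\geq\, e^{-A/2}\sigma(A/(2t)) \,\geq\, C_V^{-1}e^{-A/2}\,\sigma(A/t).
\]
Dividing, the factor $\sigma(A/t)$ cancels and the quotient is bounded by a constant multiple of $e^{-A/2}$; choosing $A=A(\epsilon)$ large enough that this is less than $\epsilon/(1-\epsilon)$ completes the proof.

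The main subtlety lies in this trade-off: shrinking the integration range by a factor of two costs an extra factor $e^{-A/2}$ in the weight but gains a factor $C_V^{-1}$ via doubling, which keeps $\sigma(A/t)$ as the reference volume on both sides. Without this observation---i.e., attempting to compare the tail directly against $\int_0^\infty e^{-tr}\,d\sigma$ or against $\sigma(A/t)$ using the trivial lower bound---one obtains only a uniform constant ratio, not an $A$-dependent decay. The doubling condition is precisely what enables both the geometric summation in the tail and the volume comparison that produces the cancellation.
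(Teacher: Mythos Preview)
Your proposal is correct and follows essentially the same route as the paper: a dyadic decomposition of the tail $\{V\geq A/t\}$ combined with iterated doubling gives the upper bound, and the key lower bound comes from restricting to $\{V\leq A/(2t)\}$ where $e^{-tV}\geq e^{-A/2}$ and invoking doubling once more to recover $\sigma(A/t)$ as the common reference volume. The paper carries this out directly on $M$ rather than pushing forward to $d\sigma$, and organizes the chain of inequalities slightly differently, but the ideas and the resulting $e^{-A/2}$ decay are identical.
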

	\begin{proof}

		We may as well assume that $\l_0$ in \Cref{Def-doubling} is $2.$
		Then for any $A>8$,
		\begin{align}\begin{split}\label{growth2}  
				&\ \ \ \ \int_{\{x \in M : V(x) \geq \frac{A}{t}\}} e^{-{tV(x)}} \, dx 
				=  \sum_{k=1}^\infty \int_{\{x : 2^{k-1}At^{-1} \leq V(x) \leq 2^kAt^{-1}\}} e^{-{tV(x)}} \, dx \\  
				&\leq \sum_{k=1}^\infty e^{-2^{k-1}A} \sigma(2^kAt^{-1}) \leq e^{-A/2}\sigma(At^{-1}/2)\sum_{k=1}^\infty e^{-2^{k-2}A}C_V^{k+1}  \\  
				&\leq e^{-A/2}\left(\sum_{k=1}^\infty e^{-(2^{k-2}-2^{-1})A}C_V^{k+1}\right)\int_{\{x:V(x)\leq \frac{A}{2t}\}}e^{-tV(x)}dx \leq Ce^{-A/2}\int_{M}e^{-tV(x)}dx .  
		\end{split}\end{align}   
	\end{proof}
As alluded above, it is critical that the following limit \eqref{eq50} converges \textbf{uniformly}.
	\begin{cor}\label{cor31}
		Assume $V$ satisfies the doubling condition \eqref{double}, then the following limit holds \textbf{uniformly}:
		\be\label{eq50}
		\lim_{\delta\to0}	\frac{\int_{M} e^{-tV(x)}\,dx}{\int_M e^{-t(1-\delta)V(x)}\,dx} =1, \ t\in(0,1]
		\ee
	\end{cor}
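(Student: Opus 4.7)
\textbf{Proof proposal for Corollary \ref{cor31}.} The plan is to reduce the statement to the tail estimate already established in Proposition \ref{prop311}. Since $V\geq 1$, we have $(1-\delta)V\leq V$ for $\delta\in(0,1)$, so the ratio $\int_M e^{-tV}\,dx / \int_M e^{-t(1-\delta)V}\,dx$ is trivially at most $1$. All the work is in the lower bound, and it must be uniform in $t\in(0,1]$.

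The strategy is to split at a level $V(x)=B/t$ for a large constant $B=B(\epsilon)$ to be fixed. On the bulk region $\{V\leq B/t\}$, the factor $t\delta V(x)$ is at most $\delta B$, hence
\[
\int_{\{V\leq B/t\}} e^{-t(1-\delta)V(x)}\,dx \leq e^{\delta B}\int_{\{V\leq B/t\}} e^{-tV(x)}\,dx \leq e^{\delta B}\int_M e^{-tV(x)}\,dx.
\]
On the tail $\{V>B/t\}$, I set $s:=t(1-\delta)\in(0,1]$ (for, say, $\delta\leq 1/2$). Then $\{V>B/t\}\subset\{V>B/(2s)\}$, so by Proposition \ref{prop311} applied to the measure $e^{-sV}dx$ with threshold $B/2=A(\epsilon/2)$, we get
\[
\int_{\{V>B/t\}} e^{-t(1-\delta)V(x)}\,dx \leq \int_{\{V>B/(2s)\}} e^{-sV(x)}\,dx \leq \frac{\epsilon}{2}\int_M e^{-t(1-\delta)V(x)}\,dx,
\]
with $B$ chosen independently of $t$.

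Adding the two pieces gives
\[
\left(1-\tfrac{\epsilon}{2}\right)\int_M e^{-t(1-\delta)V(x)}\,dx \leq e^{\delta B}\int_M e^{-tV(x)}\,dx,
\]
i.e.\ the ratio is bounded below by $(1-\epsilon/2)e^{-\delta B}$. Since $B$ depends only on $\epsilon$ (not on $t$ or $\delta$), choosing $\delta$ sufficiently small makes $e^{-\delta B}\geq 1-\epsilon/2$, so the ratio exceeds $(1-\epsilon/2)^2\geq 1-\epsilon$ uniformly for $t\in(0,1]$. Letting $\epsilon\to 0$ yields the claim.

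The only subtle point — and the one I would emphasize — is the uniformity: it rests on the fact that the threshold $A(\epsilon)$ in Proposition \ref{prop311} is independent of the scaling parameter (here $s=t(1-\delta)$), which is precisely why that proposition was stated with a uniform constant in $t$. Once that is noted, the split above is routine and no further obstacle remains.
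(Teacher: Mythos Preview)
Your proof is correct and follows essentially the same approach as the paper: both split the integral at a level $V\sim 1/t$, control the tail via Proposition \ref{prop311} (applied at the rescaled time $s=t(1-\delta)\in(0,2]$), and bound the bulk using that $t\delta V\leq \delta B$ there. Your presentation is slightly more streamlined in that you directly bound the ratio rather than the difference $|e^{-tV}-e^{-t(1-\delta)V}|$, and you observe at the outset that the upper bound $\leq 1$ is trivial for $\delta>0$; the paper's version, by contrast, keeps $|\delta|$ and establishes both inequalities, thereby covering the two-sided limit $\delta\to 0$ --- but only the case $\delta>0$ is used downstream, so this is a minor difference.
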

\begin{proof}
	By \eqref{eq86},  for any $\ep>0$, there exists $A=A(\ep)$, such that for any $\delta\in(-\half,\half),t\in(0,1]$,
	\be\label{eq51}
		{\int_{\{x \in M : V(x) \geq \frac{A}{t}\}} e^{-t(1+\delta)V(x)}\,dx} \leq \epsilon {\int_M e^{-t(1+\delta)V(x)}\,dx}.
	\ee
	Next, for any $\delta\in(-\half,\half)$,
	\be\label{eq521}
	\int_{\{x \in M : V(x) \leq \frac{A}{t}\}} |e^{-tV(x)}-e^{-t(1-\delta)V(x)}|dx\leq (e^{|\delta| A}-e^{-|\delta|A})\int_M e^{-t(1-\delta)V(x)}dx.
	\ee
	%Then \eqref{eq50} follows from \eqref{eq51} and \eqref{eq521} easily.
    Let $$F(\delta)=\frac{\int_{M} e^{-tV(x)}\,dx}{\int_M e^{-t(1-\delta)V(x)}\,dx}.$$ 
    Writing the top integral into two parts corresponding to the region $V \geq A/t$ and $V\leq A/t$ and using \eqref{eq51}, we deduce
    $$ F(\delta) \leq \ep F(\delta) + \frac{\int_{V\leq A/t} e^{-tV(x)}\,dx}{\int_M e^{-t(1-\delta)V(x)}\,dx}. $$
    On the other hand, \eqref{eq521} yields
    $$\frac{\int_{V\leq A/t} e^{-tV(x)}\,dx}{\int_M e^{-t(1-\delta)V(x)}\,dx} \leq 1 + (e^{|\delta| A}-e^{-|\delta|A}). $$
    Combining, we conclude
    $$F(\delta) \leq \frac{1 + (e^{|\delta| A}-e^{-|\delta|A})}{1-\ep}. $$
    An easier argument gives us 
    $$F(\delta) \geq 1-(e^{|\delta| A}-e^{-|\delta|A}).  $$
    Our result follows.
\end{proof}

Next, we deal with the integral of heat kernel outside the time dependent region. For an eigenform \( u \) corresponding to an eigenvalue \( \leq \lambda \), we show that its \( L^2 \)-norm is concentrated on the set \(\{x \in M : V(x) \leq C\lambda\}\) for large \( C > 1 \).  

\begin{lem}\label{prop390}  
If \( u \) is an eigenform of \( \h^2\Delta + V \) with eigenvalue \( \leq \lambda \) and $\|u\|_{L^2}=1$, then for any \( C > 1 \),  
\[
\int_{\{x \in M : V(x) \geq C\lambda\}} |u|^2(x) \,dx \leq \frac{1}{C}.
\]  
\end{lem}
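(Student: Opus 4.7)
The plan is a one-line Markov/Chebyshev argument powered by the variational identity for the eigenvalue. Since we are in Section~3 where $V\geq 1$ is standing hypothesis, both $V$ and the Dirichlet term $\h^2\|\nabla u\|^2$ are nonnegative, and this is the only structural input we need.

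First I would write down the eigenvalue identity. If $(\h^2\Delta+V)u=\tilde\lambda u$ with $\tilde\lambda\leq\lambda$ and $\|u\|_{L^2}=1$, then pairing with $u$ and integrating by parts (which is justified because $u$ is a genuine $L^2$ eigenfunction of an essentially self-adjoint operator with discrete spectrum, so $u$ lies in the form domain) gives
\[
\tilde\lambda \;=\; \langle (\h^2\Delta+V)u,u\rangle \;=\; \h^2\int_M |\nabla u|^2\,dx \;+\; \int_M V\,|u|^2\,dx.
\]
Since $\h^2\int_M|\nabla u|^2\,dx\geq 0$, I get the basic moment bound
\[
\int_M V\,|u|^2\,dx \;\leq\; \tilde\lambda \;\leq\; \lambda.
\]

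Next I would apply Markov's inequality on the sub-level structure. On the set $\{V\geq C\lambda\}$ we have $1\leq \frac{V}{C\lambda}$, so
\[
\int_{\{V\geq C\lambda\}} |u|^2(x)\,dx \;\leq\; \frac{1}{C\lambda}\int_{\{V\geq C\lambda\}} V\,|u|^2\,dx \;\leq\; \frac{1}{C\lambda}\int_M V\,|u|^2\,dx \;\leq\; \frac{1}{C},
\]
which is the claimed estimate. There is really no obstacle here; the only subtlety worth mentioning is the integration-by-parts justification, which is automatic from the essential self-adjointness of $\h^2\Delta+V$ (noted in the introduction) together with $u$ being an honest eigenfunction in $\mathrm{Dom}(\h^2\Delta+V)\subset H^1(M)$.
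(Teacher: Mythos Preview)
Your proof is correct and essentially identical to the paper's: both use the quadratic form identity $\h^2\|\nabla u\|^2+\int_M V|u|^2\leq\lambda$ followed by the pointwise bound $V\geq C\lambda$ on the relevant set. The paper compresses this into a single chain of inequalities, while you spell out the integration-by-parts justification, but the argument is the same.
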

		\begin{proof}
				This is because
			\begin{align*}
				&\ \ \ \ C\l \int_{\{x \in M : V(x) \geq C\l\}} |u|^2(x) \, dx \leq \int_{\{x \in M : V(x) \geq C\l\}} V |u|^2(x) \, dx \\
				&\leq \int_M \left( \h^2 |\nabla u|^2(x) + V |u|^2(x) \right) \, dx \leq\l.
			\end{align*}
		\end{proof}
Let \(\lambda_k(\h)\) be the \(k\)-th eigenvalue of \(\Delta_\h = \h^2\Delta + V\).  
For any \(t > 0\), we show that for some \(\Lambda > 0\), the sum \(\sum_{\lambda_k(\h) \leq \frac{\Lambda}{t}} e^{-t\lambda_k(\h)}\) makes a significant contribution to the heat trace of \(e^{-t\Delta_\h}\).
			\begin{lem}\label{prop39}
			 Assume $V$ satisfies the doubling condition \eqref{double}. Then for any \(\epsilon,\delta > 0\), there exists a constant \(\Lambda = \Lambda(\epsilon,\delta) > 0\), independent of \((t, \h)\), such that
			\be\label{eq82} 
			\sum_{\l_k(\h) \geq \frac{\Lambda}{t}} e^{-t\l_k(\h)} < \epsilon \sum_{k}e^{-t(1-\delta)\l_k(\h)}.
			\ee
	\end{lem}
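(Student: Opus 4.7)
The proof is essentially a one-line calculation based on a Gaussian-type factorization of the exponential weight. The plan is to split, for each eigenvalue $\lambda_k(\h) \geq \Lambda/t$, the weight as
\[
e^{-t\lambda_k(\h)} = e^{-\delta t\lambda_k(\h)}\cdot e^{-(1-\delta) t\lambda_k(\h)},
\]
and then to exploit the fact that on the range $\lambda_k(\h) \geq \Lambda/t$ the first factor is uniformly small: $e^{-\delta t \lambda_k(\h)} \leq e^{-\delta \Lambda}$.

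Summing this pointwise bound over all indices $k$ with $\lambda_k(\h) \geq \Lambda/t$ yields
\[
\sum_{\lambda_k(\h) \geq \Lambda/t} e^{-t\lambda_k(\h)}
\leq e^{-\delta \Lambda}\sum_{\lambda_k(\h) \geq \Lambda/t} e^{-(1-\delta)t\lambda_k(\h)}
\leq e^{-\delta \Lambda}\sum_{k} e^{-(1-\delta)t\lambda_k(\h)},
\]
where we have dropped the restriction in the last step using positivity. The right-hand side is exactly $e^{-\delta \Lambda}$ times $\sum_k e^{-(1-\delta)t\lambda_k(\h)}$, which is finite (by essentially the same discreteness/trace class argument that makes $\Tr(e^{-t(\h^2\Delta+V)})$ finite, applied at time $(1-\delta)t$).

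To finish, it suffices to choose $\Lambda = \Lambda(\epsilon,\delta)$ so that $e^{-\delta\Lambda} < \epsilon$, for instance any $\Lambda > \delta^{-1}\log(1/\epsilon)$. This $\Lambda$ depends only on $\epsilon$ and $\delta$, and is manifestly independent of $t$ and $\h$, as required. There is no real obstacle here: the only thing to check is that the threshold $\Lambda/t$ couples correctly with the factor $t\lambda_k(\h)$ in the exponent so that the cutoff contributes a constant $e^{-\delta\Lambda}$, which it does.
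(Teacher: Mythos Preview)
Your proof is correct and is essentially identical to the paper's own argument: the paper also writes $e^{-t\lambda_k(\h)} \leq e^{-\delta\Lambda} e^{-t(1-\delta)\lambda_k(\h)}$ for $\lambda_k(\h)\geq \Lambda/t$, sums, and then drops the restriction by positivity. The only cosmetic difference is that the paper records the condition $\Lambda>2$ rather than writing out the explicit threshold $\Lambda>\delta^{-1}\log(1/\epsilon)$.
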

\begin{proof}
	This is because, for any $\Lambda>2$,
	\begin{align*}
			\sum_{\l_k(\h) \geq \frac{\Lambda}{t}} e^{-t\l_k(\h)} \leq e^{-\delta\Lambda}\sum_{\l_k(\h) \geq \frac{\Lambda}{t}} e^{-t(1-\delta)\l_k(\h)}\leq e^{-\delta\Lambda}\sum_{k} e^{-t(1-\delta)\l_k(\h)}.
	\end{align*}
\end{proof}

		Recall that $K_{\h}$ is the heat kernel associated with $\h^2\Delta + V$. We have:
		
		\begin{prop}\label{prop47}
		Assume $V$ satisfies the doubling condition \eqref{double}.	For any $\epsilon,\delta\in(0,\half)$,
			there exists $B = B(\epsilon,\delta) > 0$ such that for all $t,\h\in(0,1]$,
			\[
			\frac{\int_{\{x \in M : V(x) \geq \frac{B}{t}\}} K_{\h}(t,x,x)\,dx}{\Tr(e^{-t(1-\delta)(\h^2\Delta + V)})} \leq \epsilon .
			\]
		\end{prop}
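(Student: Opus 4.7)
The plan is to use the spectral decomposition of the heat kernel and combine \Cref{prop390} (which says low-energy eigenfunctions are concentrated on $\{V \lesssim \lambda\}$) with \Cref{prop39} (which says high-energy eigenvalues contribute little to the heat trace). The potential-based cutoff $\{V \geq B/t\}$ lives far from the mass of low-energy eigenfunctions, while the high-energy eigenvalues are absorbed directly by comparison with a slightly colder trace.

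First, I write
\[
K_\h(t,x,x) = \sum_k e^{-t\lambda_k(\h)} |u_k(x)|^2,
\]
where $\{u_k\}$ is an $L^2$-orthonormal basis of eigenfunctions for $\h^2\Delta + V$ with eigenvalues $\lambda_k(\h)$. Fix a threshold $\Lambda > 0$ (to be chosen) and split
\[
\int_{\{V \geq B/t\}} K_\h(t,x,x)\,dx = S_1 + S_2,
\]
where $S_1$ sums over $\lambda_k(\h) \leq \Lambda/t$ and $S_2$ over $\lambda_k(\h) > \Lambda/t$.

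For the high-energy part $S_2$, since $|u_k|^2$ integrates to $1$, I just drop the indicator:
\[
S_2 \leq \sum_{\lambda_k(\h) > \Lambda/t} e^{-t\lambda_k(\h)}.
\]
Applying \Cref{prop39} with tolerance $\epsilon/2$ and parameter $\delta$, there exists $\Lambda = \Lambda(\epsilon,\delta)$, independent of $(t,\h)$, such that
\[
S_2 \leq \tfrac{\epsilon}{2} \sum_k e^{-t(1-\delta)\lambda_k(\h)} = \tfrac{\epsilon}{2}\,\Tr\bigl(e^{-t(1-\delta)(\h^2\Delta+V)}\bigr).
\]

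For the low-energy part $S_1$, I use \Cref{prop390} with $C = B/\Lambda$: provided $B \geq 2\Lambda/\epsilon > \Lambda$, for any $k$ with $\lambda_k(\h) \leq \Lambda/t$ we have $B/t \geq C\lambda_k(\h)$, so
\[
\int_{\{V \geq B/t\}} |u_k|^2\,dx \leq \int_{\{V \geq C\lambda_k(\h)\}} |u_k|^2\,dx \leq \frac{1}{C} = \frac{\Lambda}{B}.
\]
Summing,
\[
S_1 \leq \frac{\Lambda}{B} \sum_{\lambda_k(\h) \leq \Lambda/t} e^{-t\lambda_k(\h)} \leq \frac{\Lambda}{B}\,\Tr\bigl(e^{-t(\h^2\Delta+V)}\bigr) \leq \frac{\Lambda}{B}\,\Tr\bigl(e^{-t(1-\delta)(\h^2\Delta+V)}\bigr),
\]
where the last inequality uses $e^{-t\lambda} \leq e^{-t(1-\delta)\lambda}$ for $\lambda \geq 0$.

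Combining, choosing $B := \max\{2\Lambda/\epsilon, 2\Lambda\}$ forces $\Lambda/B \leq \epsilon/2$, hence
\[
\int_{\{V \geq B/t\}} K_\h(t,x,x)\,dx \leq \epsilon\,\Tr\bigl(e^{-t(1-\delta)(\h^2\Delta+V)}\bigr),
\]
which is uniform in $t,\h \in (0,1]$ since $\Lambda$ and $B$ depend only on $\epsilon,\delta$. The main subtlety is arranging the bookkeeping so that the $(1-\delta)$-compression factor appears on the right-hand side for both $S_1$ and $S_2$; \Cref{prop39} supplies exactly this for $S_2$, while for $S_1$ it comes for free since the exponential is monotone in $\lambda \geq 0$. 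No delicate estimates on the eigenfunctions themselves are needed beyond the elementary $L^2$-localization in \Cref{prop390}.
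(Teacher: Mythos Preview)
Your proof is correct and follows essentially the same approach as the paper: both split the spectral sum at the threshold $\Lambda/t$, handle the high-energy part via \Cref{prop39} and the low-energy part via \Cref{prop390} with $B$ proportional to $\Lambda/\epsilon$. The only differences are cosmetic bookkeeping (you split into two $\epsilon/2$ contributions while the paper obtains two $\epsilon$ bounds and implicitly rescales).
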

		
		\begin{proof}
		 Let $\lambda_k(\h)$ be the $k$-th eigenvalue of $\h^2\Delta + V$, and let $u_k$ denote the corresponding unit eigenfunction. Then,
			\[
			K_{\h}(t,x,x) = \sum_k e^{-t\lambda_k(\h)} |u_k(x)|^2.
			\]
				Let $\Lambda=\Lambda(\ep,\delta)$ be determined in \Cref{prop39}. Set
			\[
			K^1_{\h}(t,x,x) = \sum_{\{k:\lambda_k(\h) \geq \frac{\Lambda}{t}\}} e^{-t\lambda_k(\h)} |u_k(x)|^2.
			\]
			By \Cref{prop39}, for any $B > 0$, we have
			\begin{equation}\label{eq88}
				\frac{\int_{\{x \in M : V(x) \geq \frac{B}{t}\}} K^1_{\h}(t,x,x)\,dx}{\Tr(e^{-t(1-\delta)(\h^2\Delta + V)})} \leq \frac{\int_M K^1_{\h}(t,x,x)\,dx}{\Tr(e^{-t(1-\delta)(\h^2\Delta + V)})} = \frac{\sum_{\{k:\lambda_k(\h) \geq \frac{\Lambda}{t}\}} e^{-t\lambda_k(\h)}}{\Tr(e^{-t(1-\delta)(\h^2\Delta + V)})} \leq \epsilon.
			\end{equation}
			Next, set
			\[
			K^2_{\h}(t,x,x) = \sum_{\{k:\lambda_k(\h) < \frac{\Lambda}{t}\}} e^{-t\lambda_k(\h)} |u_k(x)|^2.
			\]
			By \Cref{prop390}, setting $B = \epsilon^{-1} \Lambda$, we see that
			\begin{equation}\label{eq89}
				\frac{\int_{\{x \in M : V(x) \geq \frac{B}{t}\}} K^2_{\h}(t,x,x)\,dx}{\Tr(e^{-t(1-\delta)(\h^2\Delta + V)})} \leq \frac{\epsilon \sum_{\{k:\lambda_k(\h) < \frac{\Lambda}{t}\}} e^{-t\lambda_k(\h)}}{\Tr(e^{-t(1-\delta)(\h^2\Delta + V)})} \leq \epsilon.
			\end{equation}
			
			The proposition follows from \eqref{eq88} and \eqref{eq89}.
		\end{proof}
Now we proceed to prove asymptotic formulas \eqref{asymtr1} and \eqref{asymtr2} in \Cref{heatexpan}. Fix any \(\ep > 0\), and let \(A = A(\ep)\) be determined by \Cref{prop311}. Using \Cref{prop311} and \eqref{eq56}, we have
\begin{equation}\label{eq55}
\begin{aligned}
\liminf_{t \to 0} \frac{\Tr(e^{-t(\Delta + V)})}{(4\pi t)^{-\n} \int_M e^{-tV(x)}\,dx} 
&\geq (1 - \ep) \liminf_{t \to 0} \frac{\int_{\{x : V(x) \leq At^{-1}\}} K_{\h=1}(t,x,x)\,dx}{(4\pi t)^{-\n} \int_{\{x : V(x) \leq At^{-1}\}} e^{-tV(x)}\,dx} \\
&= 1 - \ep.
\end{aligned}
\end{equation}
By \Cref{cor31}, there exists \(\delta_0 \in (0, \frac{1}{2})\) such that
\begin{equation}\label{delta0}
\frac{\int_{M} e^{-t(1 - \delta_0)V(x)}\,dx}{\int_M e^{-tV(x)}\,dx} \leq 2.
\end{equation}
Next, let \(L = \max\{ A(\ep), B(\ep, \delta_0)\}\), where \(A(\ep)\) and \(B(\ep, \delta_0)\) are determined by \Cref{prop311} and \Cref{prop47}. Then:
\be\label{eq571}\ba
&\ \ \ \ 	\limsup_{t\to0}\frac{\Tr(e^{-t(\Delta+V)})}{(4\pi t)^{-\n}\int_M e^{-tV(x)}dx}\\&\leq\limsup_{t\to0}\frac{\int_{\{x:V(x)\leq Lt^{-1}\}}K_{\h=1}(t,x,x)dx+\ep\Tr(e^{-t(1-\delta_0)(\Delta+V)})}{(4\pi t)^{-\n}\int_{M} e^{-tV(x)}dx}\\
&\leq \limsup_{t\to0}\frac{\int_{\{x:V(x)\leq Lt^{-1}\}}K_{\h=1}(t,x,x)dx}{(4\pi t)^{-\n}\int_{M} e^{-tV(x)}dx}+2\ep\limsup_{t\to0}\frac{\Tr(e^{-t(1-\delta_0)(\Delta+V)})}{(4\pi t)^{-\n}\int_M e^{-t(1-\delta_0)V(x)}dx}\\
&\leq (1+\ep)\limsup_{t\to0}\frac{\int_{\{x:V(x)\leq Lt^{-1}\}}K_{\h=1}(t,x,x)dx}{(4\pi t)^{-\n}\int_{\{x:V(x)<Lt^{-1}\}} e^{-tV(x)}dx}+2\ep\limsup_{t\to0}\frac{\Tr(e^{-t(\Delta+V)})}{(4\pi t)^{-\n}\int_M e^{-tV(x)}dx}
\\&= 1+\ep+2\ep\limsup_{t\to0}\frac{\Tr(e^{-t(\Delta+V)})}{(4\pi t)^{-\n}\int_M e^{-tV(x)}dx},
\ea	\ee
where the first inequality follows from \Cref{prop47}, the second inequality follows form \eqref{delta0}, the third inequality follows from \Cref{prop311}, and the equality follows from \eqref{eq56}. 

From \eqref{eq571}, we conclude that
\begin{equation}\label{eq581}
\limsup_{t \to 0} \frac{\Tr(e^{-t(\Delta + V)})}{(4\pi t)^{-\n} \int_M e^{-tV(x)}\,dx} \leq \frac{1 + \ep}{1 - 2\ep}.
\end{equation}
Finally, combining \eqref{eq55} and \eqref{eq581} and letting \(\ep \to 0\), we establish \eqref{asymtr1}. 

The asymptotic behavior in \eqref{asymtr2} can be derived analogously. 

\appendix
\section{Comparison of Function Spaces}\label{appendix-A}
In this section, we compare several function spaces considered in classical literature on the Weyl law for Schr\"odinger operators on $\mathbb{R}^n$. We will show that the function spaces introduced in this paper are significantly larger than all the previously studied ones.

\subsection{${\mathcal{O}}_\beta' \subset \tilde{\mathcal{O}}_\beta$}\label{appendix a1}
In this subsection, we show that when $M = \R^n$, we have the inclusion $\mO_\beta' \subset \tmO_\beta,\b\in[0,\half)$. Recall that in $\R^n,$ we take $\tau_0=\sqrt{n}$.

It suffices to verify  \eqref{beta-regularity1}. Assume $V \in \mO_\beta'$, and recall that $\Omega_\lambda = \{x \in \R^n : V(x) < \lambda\}$. We may as well assume that $V\geq1$ a.e.
By \eqref{beta-regularity2}, 
\[\ba
&\quad \int_{B_{\tau_0}(x)} \int_{ S_r(z)\cap B_{\tau_0}(x)}|V(z) - V(y)|\,\dvol_{S_r(z)}(y)\, dz \\
&=\int_{B_{\tau_0(x)}}  \int_{ \{w\in S_r(0):z+w\in B_{\tau_0}(x)\}}|V(z) - V(z+w)|\,\dvol_{S_r(0)}(w)\, dz\\
&=\int_{S_{r(0)}}  \int_{ \{z\in B_{\tau_0}(x):z+w\in B_{\tau_0}(x)\}}|V(z) - V(z+w)|\,dz\,\dvol_{S_r(0)}(w)\\
&\leq \int_{S_r(0)} \eta(r)r^{2\beta}V^{1+\beta}(x)\,\dvol_{S_r(0)}(w)=\eta(r)r^{2\beta+n-1}V^{1+\beta}(x)|S_1(0)|.
\ea\]
 Thus, $\mO_\b'\subset\tmO_\b.$

\subsection{$\tmO_\b\subset \mO_\b$}
First, by the volume comparison theorem and the Vitali covering lemma (see \cite[$\S$ 1.3]{fanghua2003geometric}), there is a collection of balls $\{B_i\}_{i \in \mathbb{Z}}$ of radius $\tau_0$ that cover $M$, and each point $p \in M$ lies in at most $N$ of these balls, 
for some constant $N = N(\tau_0, R_0) > 0$.

Assume $V \in \tmO_\beta,\b\in[0,\half)$. We may as well assume that $V\geq1$ a.e.

Let $\{B_j\}_{j \in I} \subset \{B_i\}_{i \in \mathbb{Z}}$ be the collection of balls such that $|B_j \cap \Omega_\lambda| > 0$ for each $j \in I$. Then, by \eqref{beta-regularity}, we have, up to a set of measure zero,
\begin{equation} \label{Qj-inclusion}
\cup_{j \in I} B_j \subset \Omega_{C_V' \lambda}.
\end{equation}
Now, for $r \in (0, \tau_0)$, let $B_j^r \subset B_j$ denote the subset of $B_j$ consisting of points whose distance to the boundary $\partial B_j$ is at least $r$.

First, note that
\be\ba
&\quad\int_{B_j}\int_{ S_r(x)}  |V(x) - V(y)|\,\dvol_{S_r(x)}(y)\,dx\\
&=\int_{B_j^r}\int_{ S_r(x)}  |V(x) - V(y)|\,\dvol_{S_r(x)}(y)\,dx+\int_{B_j-B_j^r}\int_{ S_r(x)}  |V(x) - V(y)|\,\dvol_{S_r(x)}(y)\,dx\\
&=:I_1+I_2.
\ea\ee

By \eqref{beta-regularity1}, \eqref{Qj-inclusion} and volume comparison, we estimate
\be\ba \label{interior-int}
&\quad I_1\leq  \int_{B_j} \int_{ S_r(x)\cap B_j}|V(x) - V(y)|\dvol_{S_r(x)}(y) dx \\
&\leq C\eta(r)\, r^{n+2\beta-1} \l^{1+\beta}\leq C'\eta(r)\, r^{n+2\beta-1} \l^{1+\beta}|B_j|.
\ea\ee
where $|B_j|$ denotes the volume of $B_j$.

For $I_2$, by \eqref{beta-regularity} and volume comparison, we have if $\l\geq1$,
\be\ba \label{boundary-int}
&\quad I_2\leq\int_{B_j-B_j^r} r^{n-1}(\l+C_V'\l)dx\leq C r^n\l \\
&\leq  C' r^n\l |B_j|\leq C' r^{1-2\b}r^{n+2\b-1} \l^{1+\b} |B_j|.
\ea\ee

Recall that each point is at most covered by $N$ balls. Summing over all $j \in I$, and using \eqref{Qj-inclusion}--\eqref{boundary-int}, and \eqref{double}, we obtain, if $\l$ is large, 
\be\ba\label{verify-beta-regularity}
&\quad \int_{\Omega_\l}\int_{S_r(x)}  |V(x) - V(y)|\,\dvol_{S_r(x)}(y)\,dx \leq \sum_{j \in I} \int_{B_j}\int_{S_r(x)}  |V(x) - V(y)|\,\dvol_{S_r(x)}(y)\,dx \\
&\leq C N\big( \eta(r) + r^{1 - 2\beta} \big) r^{n + 2\beta-1} \lambda^{1 + \beta} \sigma(C_V' \lambda) \leq C''\big( \eta(r) + r^{1 - 2\beta} \big) r^{n + 2\beta-1} \lambda^{1 + \beta} \sigma(\lambda).
\ea\ee

Setting $\tilde\eta(r)= C'' \big( \eta(r) + r^{1 - 2\beta} \big)$, and noting that $\tilde\eta(r)\leq\tilde\eta\big(\l^{-1/3}\big)$ if $r\in(0,\l^{-1/3})$, we verify the condition \eqref{beta-regularity0} with $\mu(\l)=\l^{\frac{1}{6}}$. Hence we conclude that $\tmO_\beta \subset \mO_\beta$.

\subsection{$\mR_\b\subset\mO_\b$}
Let $V \in \mR_\b, \b\in[0,\half]$. Without loss of generality, we may assume that $V \geq 1$ a.e. It follows from \eqref{secondcond} and volume comparison that if $\l$ is large and $r\leq \tau_0$,
\[
\begin{aligned}
&\quad \int_{\Omega_\lambda - \Omega_{\sqrt{\lambda}}} \int_{S_r(x)} |V(x) - V(y)|\,\dvol_{S_r(x)}(y)\,dx \\
&\leq \int_{\Omega_\lambda - \Omega_{\sqrt{\lambda}}} \int_{S_r(x)} \lambda^{1 + \b} v(\sqrt{\lambda}) r^{2\b} \,\dvol_{S_r(x)}(y)\,dx \\
&\leq C v(\sqrt{\lambda})\, r^{n + 2\b-1} \lambda^{1 + \b} \sigma(\lambda),
\end{aligned}
\]
and similarly,
\[
\begin{aligned}
&\quad \int_{\Omega_{\sqrt{\lambda}}} \int_{S_r(x)} |V(x) - V(y)|\,\dvol_{S_r(x)}(y)\,dx \\
&\leq \int_{\Omega_{\sqrt{\lambda}}} \int_{S_r(x)} \sqrt{\lambda}^{1 + \b} v(1) r^{2\b} \,\dvol_{S_r(x)}(y)\,dx \\
&\leq C v(1)\, {\lambda}^{-\frac{1+\b}{2}} \, r^{n + 2\b-1} \lambda^{1 + \b} \sigma(\lambda).
\end{aligned}
\]

Therefore, setting ${\eta}(r) = C v(r^{-\half}) + C v(1) r^{\frac{1 + \b}{2}}$, the condition \eqref{beta-regularity0} is satisfied, and we conclude that $\mR_\b \subset \mO_\b$.
\subsection{More Function Spaces}\label{appendix a4}
\def\tmR{{\widetilde{\mathcal{S}}}}
For $a \in [0, \tfrac{1}{2})$, let $\tmR_a$ be the class of functions satisfying the same conditions as $\mathcal{R}_a$, except that \eqref{secondcond} is replaced by:
\begin{equation}\label{smooth-a}
V \in \mathrm{Lip}(M) \quad \text{and} \quad |\nabla V(x)| \leq C_V'' \max\{1, V(x)\}^{1+a} \quad \text{a.e.},
\end{equation}
for some constant $C_V'' > 1$. Here $\mathrm{Lip}(M)$ denotes the space of Lipchitz functions on $M$.

\begin{prop}
For any $\beta \in (a, \tfrac{1}{2})$, we have $\tmR_a \subset \mathcal{O}_\beta$.
\end{prop}

\begin{proof}
Let $V\in \tmR_a$. 
Fix $\beta \in (a, \tfrac{1}{2})$. For any set $U \subset M$ and $r > 0$, consider
\[
U^{+r} := \{ x \in M : d(x, U) < r \}.
\]
Let $\lambda > 1$ be sufficiently large. We claim that for any $r \in (0, \lambda^{-\beta})$,
\begin{equation}\label{osicillation-small}
\Omega_\lambda^{+r} \subset \Omega_{C_V'' \lambda}.
\end{equation}
Assuming the claim, we estimate using \eqref{smooth-a}:
\[
\begin{aligned}
&\quad \int_{\Omega_\lambda} \int_{S_r(x)} |V(x) - V(y)|\,\dvol_{S_r(x)}(y)\,dx \leq \int_{\Omega_\lambda} \int_{S_r(x)} r \cdot \sup_{z \in \Omega_\lambda^{+r}} |\nabla V(z)| \,\dvol_{S_r(x)}(y)\,dx \\
&\leq C \int_{\Omega_\lambda} \int_{S_r(x)} r \lambda^{1+a} \,\dvol_{S_r(x)}(y)\,dx \leq C' \lambda^{1+a} r^n \sigma(\lambda) \leq C' \lambda^{a - \beta} \lambda^{1 + \beta} r^{n - 1 + 2\beta} \sigma(\lambda).
\end{aligned}
\]
Hence the condition \eqref{beta-regularity0} is satisfied, and it remains to prove the claim \eqref{osicillation-small}.

Let $d := d(\partial \Omega_{C_V'' \lambda}, \partial \Omega_\lambda)$ and let $\gamma: [0, d] \to M$ be a unit-speed minimizing geodesic connecting a point on $\partial \Omega_\lambda$ to a point on $\partial \Omega_{C_V'' \lambda}$. Then $V(\gamma(s)) \in (\lambda, C_V'' \lambda)$ for all $s \in (0, d)$, otherwise it contradicts the minimality of $\gamma$. Using \eqref{smooth-a}, we get
\[
(C_V'' - 1) \lambda = |V(\gamma(0)) - V(\gamma(d))| \leq \int_0^d |\nabla V(\gamma(s))|\,ds \leq C_V'' \lambda^{1 + a} d,
\]
which implies
\[
d \geq \frac{C_V'' - 1}{C_V'' \lambda^a}.
\]
Thus, for large $\lambda$, any $r < \lambda^{-\beta}$ with $\beta > a$ satisfies $r < d$, and hence \eqref{osicillation-small} holds.
\end{proof}

Let $\widetilde{\mathcal{R}}_0$ be the class of functions satisfying the same conditions as $\mathcal{R}_a$, except that \eqref{secondcond} is replaced by the following: there exists an increasing function $\eta \in C([0, \tau_0))$ with $\eta(0) = 0$ such that for almost every $d(x,y)<\tau_0$,
\begin{equation}\label{smooth-0}
|V(x) - V(y)| \leq \eta\big(d(x, y)\big) \max\{1, |V(x)|\}.
\end{equation}

\begin{prop}
We have $\widetilde{\mathcal{R}}_0 \subset \mathcal{O}_0$.
\end{prop}

\begin{proof}
Let $V \in \tilde{\mathcal{R}}_0$, and fix any $\gamma \in (0, \tfrac{1}{2})$. Then, for any $r \in (0, \lambda^{-\gamma})$ and sufficiently large $\lambda$, we estimate
\[
\begin{aligned}
&\quad \int_{\Omega_\lambda} \int_{S_r(x)} |V(x) - V(y)|\,\dvol_{S_r(x)}(y)\,dx \\
&\leq \int_{\Omega_\lambda} \int_{S_r(x)} \eta(\lambda^{-\gamma}) \lambda \,\dvol_{S_r(x)}(y)\,dx \\
&\leq C \eta(\lambda^{-\gamma}) \lambda r^{n-1} \sigma(\lambda),
\end{aligned}
\]
which verifies \eqref{beta-regularity0} with $\beta = 0$. Hence $V \in \mathcal{O}_0$.
\end{proof}
\subsection{Compare with classical results on $\R^n$}\label{classical}
Assuming that $V \to \infty$ and satisfies the doubling condition, \Cref{weyl1} extends several classical results to general noncompact manifolds with bounded geometry, under assumptions that are strictly and substantially weaker. For more details, see the discussion below.

The spaces $\mO_\beta'$, $\beta \in [0, \tfrac{1}{2})$, and $\mR_a$, $a \in [0, \tfrac{1}{2}]$, were studied in \cite{rozenbljum1974asymptotics}. Note that $\mR_\b$ and $\mO_\beta'$ are not contained in each other, but we have shown that $\mR_\b \subset \mO_\b$ and $\mO_\beta' \subset \tmO_\beta\subset\mO_\b$. As a result, $\mO_\beta'$ is strictly contained in $\mO_\beta,\b\in[0,\half)$.

The space ${\tmR}_a$, $a \in [0, \tfrac{1}{2})$, was considered by Tachizawa \cite[Theorem 4.3]{tachizawa1992eigenvalue} and Feigin \cite{feigin1976asymptotic}. However, Tachizawa's method applies only to $\mathbb{R}^n$ with $n \geq 3$, whereas our \Cref{weyl1} imposes no dimensional restriction. Feigin studied the asymptotic distribution of eigenvalues of pseudo-differential operators, including the Schrödinger operator $\Delta + V$, but required extra conditions on higher-order derivatives of $V$.

The space $\widetilde{\mR}_0$ was considered by Fleckinger \cite{fleckinger1981estimate}, though with additional assumptions imposed on the potential function. Specifically, consider a partition of $\mathbb{R}^n$ into a family of disjoint open cubes $\{Q_i\}_{i \in \mathbb{Z}}$ of fixed side length $\eta > 0$. Consider
\[
I := \left\{ i \in \mathbb{Z} : \bar{Q}_i \subset \Omega_\lambda \right\}, \quad
J := \left\{ i \in \mathbb{Z} : \bar{Q}_i \cap \Omega_\lambda \neq \varnothing \right\},
\]
where $\Omega_\lambda = \{ x \in \mathbb{R}^n : V(x) \leq \lambda \}$. Then the following condition is required by Fleckinger:
\[
\lim_{\eta \to 0} \frac{\#(J \setminus I)}{\# J} = 0 \quad \text{when $\l$ is large }.
\]
\def\subs{1}
\if\subs1
\def\tv{{\tilde{v}}}
\def\ttv{{\tilde{\tv}}}
\subsection{Removing the Bounded Geometry Assumption}\label{removing}

We briefly discuss how to relax the bounded geometry assumption. Curvature bounds appear only in the following estimates:

\begin{itemize}
  \item \textbf{\eqref{rh}}:  Uses the curvature and its first covariant derivative to estimate \( \Delta G^{-1/4} \).
    \item \textbf{\eqref{tilderhk1}}: Uses curvature bounds to estimate \( \Delta d^2(x,y) \) (differentiation in the \( y \)-variable).
  \item \textbf{\eqref{heat estimate}}: Requires curvature bounds to obtain the heat kernel estimate.
  \item \textbf{\eqref{Gaussint}}: Uses Ricci curvature lower bounds for volume comparison arguments.
\end{itemize}
We now discuss how to extend the results beyond the bounded geometry setting:
\begin{itemize}
  \item \textbf{Bounded curvature, but injectivity radius degenerates.}  
  Let \( \mO_\beta^{\mathrm{inj}} \subset \mO_\beta \), where \( \beta \in [0, \tfrac{1}{2}] \), consist of functions \( V \geq 0 \) such that for large \( \lambda \), the injectivity radius at any point \( x \notin \Omega_\lambda \) is bounded below by \( V(x)^{-1/2} \mu(x) \), with \( \mu \) as in \eqref{beta-regularity0}. Our arguments remain valid for functions in $\mO_\beta^{\mathrm{inj}}$.
  
  \item \textbf{Unbounded curvature.}  
  Suppose the curvature as well as its first covariant derivative is not uniformly bounded, but satisfies an upper bound of the form \( f(V(x)) \), where \( f : [0, \infty) \to [0, \infty) \) is continuous and satisfies \( \lim_{\lambda \to \infty} f(\lambda) = \infty \). Then the estimates similar to \eqref{tilderhk1} and \eqref{Gaussint} remain valid within geodesic balls of radius \( V(x)^{-1/2} \mu(x) \) for some suitable $f$, using local comparison geometry. The heat kernel estimate \eqref{heat estimate} also holds in such balls for small \( t \), via rescaling and finite propagation speed arguments.
\end{itemize}

The arguments in this paper can also be extended to Schr\"odinger operators on vector bundles $E \to M$ (and hence to magnetic Schr\"odinger operators on $\mathbb{R}^n$), provided that the curvature of $E$ and its first covariant derivative are controlled by the potential $V$ as described above.
\subsection{Beyond Classical KHL Tauberian theorem}\label{non-regular}
In this subsection, we will show that there exists an increasing function $\nu$ on $[0,\infty)$ such that $\nu$ satisfies the doubling condition, but 
\[
\int_0^\infty e^{-t\l} \, d\nu(\l)
\]
is not asymptotically regularly varying.

Consider the increasing function
\[
\nu(\l):=
\begin{cases}
0, & \l \in [0,\half); \\[0.5ex]
2^{k-1}, & \l \in (2^{k-1}, 2^k],\quad \text{$k \geq 0$ even}; \\[0.5ex]
\sqrt{2}\,2^{k-1}, & \l \in (2^{k-1}, 2^k],\quad \text{$k \geq 0$ odd}.
\end{cases}
\]
One can easily verify that
\[
\frac{\nu(2^k)}{\nu(2^{k-1})} =
\begin{cases}
2\sqrt{2}, & \text{if $k$ is odd}; \\
\sqrt{2}, & \text{if $k$ is even};
\end{cases}
\]
and that
\[
\nu(2\l) \leq 2\sqrt{2}\,\nu(\l), \quad \l \geq 1.
\]
Thus, $\nu$ satisfies the doubling condition, but the limit
\[
\lim_{\l \to \infty} \frac{\nu(2\l)}{\nu(\l)}
\]
does not exist.

Now, suppose
\[
\int_0^\infty e^{-t\l} \, d\nu(\l)
\]
is asymptotically regularly varying as $t \to 0^+$. Then by the classical KHL Tauberian theorem, $\nu(\l)$ must be asymptotically regularly varying as $\l \to \infty$, which implies that
\[
\lim_{\l \to \infty} \frac{\nu(2\l)}{\nu(\l)}
\]
exists—a contradiction.
\fi
\section{On Sharpness of $\beta$-oscillation conditions}\label{appendix-B}
The following construction is from \cite[§6]{rozenbljum1974asymptotics}. 
In $\mathbb{R}^n$, $n \geq 2$, consider the set
\[
U = \left\{ x = (x', s) \in \mathbb{R}^{n-1} \times \mathbb{R} : 1 < s < \infty,\ |x'| < s^{-\theta} \right\}.
\]
Consider
\[
V(x) :=
\begin{cases}
|x|^{\kappa_2}, & x \in U, \\
|x|^{\kappa_1}, & x \in \mathbb{R}^n \setminus U,
\end{cases}
\]
where the parameters satisfy
\[
\frac{1}{n-1} > \theta > \frac{\kappa_1}{2}, \quad \frac{1 - \theta(n-1)}{\kappa_2} > \frac{n}{\kappa_1}, \quad \kappa_1 < 1.
\]

It is shown in \cite[§6]{rozenbljum1974asymptotics} that this potential $V$ satisfies the doubling condition \eqref{double} and the condition \eqref{beta-regularity2}, but fails to satisfy the  condition \eqref{beta-regularity}. Moreover, the classical Weyl law \eqref{heatWeyl1} for $\Delta + V$ does not hold.

We further show that $V$ also fails oscillation condition \eqref{beta-regularity0}.

Recall that $\Omega_\lambda := \{ x \in \mathbb{R}^n : V(x) \leq \lambda \}$ and $\sigma(\lambda) := |\Omega_\lambda|$. For the potential $V$ described above, a direct computation shows that 
\[
\sigma(\lambda) \approx \lambda^{\frac{1 - \theta(n-1)}{\kappa_2}}.
\]
Here, for two functions $f, g$ on $[0,\infty)$, we write $f \approx g$ if there exists a constant $C > 1$ such that for all sufficiently large $\lambda$,
\[
C^{-1} g(\lambda) \leq f(\lambda) \leq C g(\lambda).
\]

Now consider the subset
\[
O_\lambda := \left\{ x \in U : \lambda^{\frac{1}{\kappa_1}} \leq |x| \leq \lambda^{\frac{1}{\kappa_2}} \right\}.
\]
Let $r = \lambda^{-1/2}$. Since $r \gg \lambda^{-\theta/\kappa_1}$ for large $\lambda$, we obtain for large $\l,$
\[
\int_{O_\lambda} \int_{S_r(x)} |V(x) - V(y)|\, \dvol_{S_r(x)}(y)\, dx 
\approx r^{n-1} \int_{\lambda^{1/\kappa_1}}^{\lambda^{1/\kappa_2}} s^{-\theta(n-1)} s^{\kappa_1}\, ds 
\approx r^{n-1} \lambda^{\frac{1 - \theta(n-1) + \kappa_1}{\kappa_2}}.
\]

Therefore, for any $\beta \in [0, \tfrac{1}{2}]$,
\[
\frac{ \displaystyle \int_{O_\lambda} \int_{S_r(x)} |V(x) - V(y)|\, \dvol_{S_r(x)}(y)\, dx }{ r^{n-1 + 2\beta} \lambda^{1+\beta} \sigma(\lambda) } 
\approx \lambda^{\frac{\kappa_1}{\kappa_2} - 1},
\]
which diverges as $\lambda \to \infty$ since $\kappa_1 > \kappa_2$. 
This shows that $V$ fails to satisfy the $\beta$-oscillation condition \eqref{beta-regularity0}.

		\bibliography{lib}
		\bibliographystyle{plain}
	\end{document}